\theoremstyle{plain}
\newtheorem{theorem}{Theorem}[section]
\newtheorem{lemma}[theorem]{Lemma}
\newtheorem{proposition}[theorem]{Proposition}
\newtheorem{corollary}[theorem]{Corollary}
\theoremstyle{remark}
\newtheorem{remark}[theorem]{Remark}
\theoremstyle{definition}
\newtheorem{definition}[theorem]{Definition}
\theoremstyle{definition}
\theoremstyle{remark}
\DeclareMathOperator{\spec}{Spec}
\DeclareMathOperator{\proj}{Proj}
\DeclareMathOperator{\pic}{Pic}
\DeclareMathOperator{\NS}{NS}
\DeclareMathOperator{\effcurve}{NE}
\DeclareMathOperator{\nef}{\overline{\mathcal{K}}}
\DeclareMathOperator{\effdivisor}{Eff}
\DeclareMathOperator{\mov}{Mov}
\DeclareMathOperator{\Cl}{Cl}
\DeclareMathOperator{\Hom}{Hom}
\DeclareMathOperator{\ann}{ann}
\newcommand{\sat}{\mathrm{sat}}
\DeclareMathOperator{\mult}{mult}
\newcommand{\pr}{\mathrm{pr}}
\newcommand{\Id}{\mathrm{Id}}
\DeclareMathOperator{\cone}{Cone}
\DeclareMathOperator{\conv}{Conv}
\DeclareMathOperator{\Fuk}{Fuk}
\newcommand{\gp}{\mathrm{gp}}
\renewcommand{\H}{\mathrm{H}}
\newcommand{\ev}{\mathrm{ev}}
\newcommand{\D}{\mathrm{D}}
\newcommand{\R}{\mathrm{R}}
\newcommand{\LF}{\mathbb{L}}
\newcommand{\Sch}{\mathrm{Sch}}
\newcommand{\Rmnum}[1]{\expandafter\@slowromancap\romannumeral #1@}
\title{Natural Compactification of the Moduli of Toric Pairs from the Perspective of Mirror Symmetry}
\begin{document}
\author{Yuecheng Zhu} \email{\url{yuechengzhu@math.utexas.edu}} \address{Department of Mathematics, the University of Texas at Austin, 2515 Speedway Stop C1200, Austin, TX, 78712, USA}

\begin{abstract}
We construct a compactification of the moduli of toric pairs by using ideas from mirror symmetry. The secondary fan $\Sigma(Q)$ is used in \cite{Alex02} to parametrize degenerations of toric pairs. It is also used in \cite{CLS} to control the variation of GIT. We verify the prediction of mirror symmetry that $\Sigma(Q)$ for the moduli of toric pairs is equal to the Mori fan of the relative minimal models of the mirror family. As a result, we give an explicit construction of the compactification $\mathscr{T}_Q$ of the moduli of toric pairs which is the normalization of the compactification in \cite{Alex02} and \cite{ols08}.
\end{abstract}
\maketitle
\tableofcontents

We study the compactification problem of the moduli of toric pairs. Fix a polarized toric variety $(X_Q,\mathcal{L})$ obtained from a lattice polytope $Q$, we consider the moduli of divisors in the linear system of $\mathcal{L}$ that do not contain any torus orbit. Following \cite{Alex02}, we call it the moduli of toric pairs. The compactification of this moduli space has been constructed in \cite{Alex02} and \cite{ols08}. The answers provided by these two papers are complete and satisfying: the natural compactification $\mathscr{K}_Q$ is obtained by adding stable toric pairs with appropriate log structures. The normalization of the coarse moduli space is the toric variety $X_{\Sigma(Q)}$ obtained from the secondary fan $\Sigma(Q)$ of $Q$. On the other hand, the secondary fan $\Sigma(Q)$ also appears in the variation of geometric invariant theory(GIT) in (\cite{CLS} Chapter 14 \& 15). In this paper, we show that the connection between these two stories is mirror symmetry: The secondary fan for the moduli of toric pairs is equal to the Mori fan of the relative minimal models of the mirror family (Theorem~\ref{Mori fan for GKZ}). In addition, by using the detailed study of the variation of GIT in \cite{CLS}, we give a more explicit construction of the families over the compactification $\mathscr{T}_Q$ of the moduli of toric pairs (Theorem~\ref{compactification for GKZ}).

Since the compactification problem of the moduli of toric pairs has been solved by \cite{Alex02} and \cite{ols08}, why another paper on it? This paper serves two purposes. First, it is interesting to relate the geometry of the two sides of mirror symmetry. Secondly, just like \cite{Alex02} and \cite{ols08}, this paper is also the simplest case for a general approach to compactification problems and should be put into a broader context. The same philosophy from mirror symmetry applied to the compactifications of moduli of abelian varieties \cite{zhuav} and the moduli of K3 surfaces \cite{GHKtheta}. We want to include this paper to make the program more complete and to manifest the constructions in the simple case. 

The mirror symmetry we use is learnt from Abouzaid's papers \cite{Ab06} and \cite{Ab09}. The moduli of toric pair $(X_Q, \mathcal{L},\Theta )$ is the Fano side (although toric varieties are in general not Fano). The mirror to $X_Q$ is a Landau--Ginzburg model, the dual algebraic torus $T_X$ plus a superpotential $W$. The polarization $\mathcal{L}$ specifies a $1$-parameter degeneration of $W$. We will recall the construction of the mirror in Section~\ref{GKZ mirror}. Note that the proofs of mirror symmetry in \cite{Ab06} and \cite{Ab09} require that the toric varieties be smooth. However, the construction of the mirror Landau--Ginzburg model makes sense for any projective toric variety, and our proof does not rely on the proofs in \cite{Ab06}, \cite{Ab09}. 

We now summarize the contents of the paper. In Section~\ref{the cone construction}, we will introduce the cone constructions, which are functors that turn affine structures to linear structures or monoids. The best--known example is the embedding of an affine space to a vector space of $1$-dimension higher as the hyperplane of height $1$. Our constructions are canonical and don't involve the choices of the embeddings. Since we use these constructions a lot, and the general forms are not written down anywhere, we include the constructions here for future references. In Section~\ref{families}, we will introduce the constructions of degenerating toric varieties over toric bases. These are the simplest cases of toric degenerations. However, to make the paper self--contained, we also include this part here. In Section~\ref{secondary fan}, we will prove Theorem~\ref{Mori fan for GKZ} which says that the secondary fan is the Mori fan of the relative minimal models of the mirror family. In Section~\ref{glue GKZ families}, we will give an explicit construction of the compactification $\mathscr{T}_Q$ of the moduli of toric pairs by stable toric pairs. The main theorem is Theorem~\ref{compactification for GKZ}. The glues of families rely on the study of the wall-crossings for the mirror family in (\cite{CLS} 15.3). 

\subsection*{Acknowledgements}
This paper is part of the author's thesis. The author would like to thank the advisor, Sean Keel for suggesting this project, and for all the help and support. The work was done partially while the author was visiting the Institute for Mathematical Sciences, National University of Singapore in 2014. The author also thanks the institute for the support of the stay. 

\section{The Cone Constructions}\label{the cone construction}
Fix the notations. If $X$ is an $A$-module, and $B$ is an $A$-algebra, the base change $X\otimes_A B$ is denoted by $X_B$.

Given a vector space $V$, one can forget about the zero vector and get an affine space. This is the forgetful functor (from linear to affine) $\mathbb{R}$. The left adjoint to $\mathbb{R}$ is the embedding of an affine space into a vector space of $1$-dimension higher as the hyperplane of height $1$. In this section, we introduce a family of similar left adjoints $\mathbb{L}$, $S$, $C$, which turn affine structures into additive (or linear) structures. 
\begin{definition}
Let $k$ be a field, and $\overline{V}$ be an affine space over some $k$-vector space $V$. Define a $k$-linear structure over the set
\[
\mathbb{L}(\overline{V}):=\big\{(q,t): q\in \overline{V}, t\in k\backslash\{0\}\big\}\cup \big\{(v,0): v \in V\big\}.
\]

The addition is defined to be
\begin{align*}
(p,t)+(q,s)&=\bigg(\frac{t}{t+s}p +\frac{s}{t+s}q, t+s\bigg), \text{ if } t\neq 0, s\neq 0, t+s\neq 0,\\
(p,t)+(v,0)&=\bigg(\frac{1}{t}v+p,t\bigg),\\
(p,t)+(q,s)&=(t(p-q),0), \text{ if } t+s=0, t\neq 0,\\
(u,0)+(v,0)&=(u+v,0).
\end{align*}

The scalar multiplication is 
\begin{align*}
s(q, t)&=(q, st), \text{ if } s\ne 0, t\ne 0\\
s(q, t)&=0, \text{ if } s=0, t\ne 0,\\
s(v, t)&=(sv,0), \text{ if } t=0.
\end{align*}
\end{definition}

Define the degree map 
\begin{align*}
\deg: \mathbb{L}(\overline{V})&\longrightarrow k\\
(q,t)&\longmapsto t.
\end{align*}

The kernel of $\deg$ is $V$. We always identify $q\in \overline{V}$ with $(q,1)\in \mathbb{L}(\overline{V})$. Therefore, we regard $\overline{V}$ as the hyperplane of $\mathbb{L}(\overline{V})$ of height $1$. This embedding induces a bijection between the set of linear functions on $\LF(\overline{V})$, and the set of affine functions on $\overline{V}$. For simplicity, $\mathbb{L}(\overline{V})$ is also denoted by $\mathbb{V}_k$ or $\mathbb{V}$. 

 Suppose $k=\mathbf{R}$. Let $X$ be a free abelian group, $\cong\mathbf{Z}^g$, and $\overline{X}$ be an $X$-torsor. For a positive integer $n$, define
\[
\overline{X}(1/n):=\Big\{q\in \overline{X}_\mathbf{Q}:q=\sum_i \frac{a_i}{n}q_i, \text{ for } q_i\in\overline{X}, a_i\in \mathbf{Z}, \sum_i a_i=n\Big\}.
\]

Regard $\overline{X}$ as a subset of $\LF(\overline{X}_\mathbf{R})$ of degree $1$. Define a free abelian group $\mathbb{L}(\overline{X})$ to be the subgroup of $\LF(\overline{X}_\mathbf{R})$ generated by $\overline{X}$,
\[
\LF(\overline{X}):=\Big\{(q,n)\in \overline{X}_{\mathbf{Q}}\times \mathbf{Z}\backslash\{0\}:q\in \overline{X}(1/n)\Big\}\cup (X\times \{0\}).
\]

As an abelian group $\LF(\overline{X})\cong \mathbf{Z}^{g+1}$. It has a grading by $\deg: \LF(\overline{X})\to \mathbf{Z}$. $\LF(\overline{X})$ is also denoted by $\mathbb{X}$.  

If $\varphi$ is an affine function on $\overline{X}_\mathbf{R}$, the linear part is the differential and is denoted by $\D\varphi$,
\[
\D\varphi(p-q)=\varphi(p)-\varphi(q).
\]

The affine function $\varphi$ is called integral if it takes integer values on $\overline{X}$. It implies that $\D\varphi$ is also integral. Denote the set of integral affine functions by $Aff(\overline{X},\mathbf{Z})$. $\mathbb{X}$ is canonically isomorphic to the dual $Aff(\overline{X},\mathbf{Z})^*$ via
\begin{align*}
(q,n)(\varphi)&=n\varphi(q), n\neq 0\\
(q,0)(\varphi)&=\D\varphi(q).
\end{align*}

The isomorphism above maps $q\in \overline{X}$ to the evaluation map $\ev_q\in Aff(\overline{X},\mathbf{Z})^*$ and thus is denoted by $\ev$. 

In both cases, $\LF$ are functors. For example, if $\varphi:\overline{X}\to\overline{X'}$ is affine (or piecewise affine), the corresponding additive (piecewise additive) map is 
\begin{eqnarray*}
\LF(\varphi): \LF(\overline{X})&\longrightarrow & \LF(\overline{X'}),\\
(q,n)&\longmapsto&\left\{\begin{array}{rl} n\varphi(q) & \text{ if } n\neq 0\\
\D\varphi(q)& \text{ if } n=0.
\end{array}\right. 
\end{eqnarray*} 

A polytope $Q\subset \overline{X}_\mathbf{R}$ is the convex hull of finite points in $\overline{X}_\mathbf{R}$ and is always bounded. If all the points can be chosen from the lattice $\overline{X}$, then $Q$ is called a lattice polytope. Let $Q$ be a lattice polytope in $\overline{X}_\mathbf{R}$. Define $Q(\frac{1}{n}):=\{q\in Q\cap \overline{X}(\frac{1}{n})\}$, and $Q(\mathbf{Q}):=\coprod_{n>0}Q(\frac{1}{n})$. Identify $Q(\mathbf{Q})\cup\{0\}$ with a subset $S(Q)\subset \mathbb{X}$.
\begin{definition}
$S(Q)$ is defined to be a graded monoid, whose underlying set is, 
\[
S(Q):=\Big\{(q,n): q\in Q(1/n), n\in \mathbf{N}\backslash\{0\}\Big\}\cup\{0\},
\]
 
with the addition and grading induced from those on $\mathbb{X}$.
\end{definition} 

\begin{remark}
$Q(\mathbf{Z})\cong \deg^{-1}(1)$. If $Q$ is of full dimension, the associated group of $S(Q)$ is $\mathbb{X}$. $S(Q)$ is a toric monoid. 
\end{remark}

This construction can be generalized to an unbounded lattice polyhedron $Q$. A polyhedron is the (not necessarily finite) intersection of closed half spaces. In this case we have to take the closure to include the infinite direction as the degree $0$ part 
\[
S(Q)_0:=\big\{(\alpha,0): \alpha\in X, Q+\alpha\subset Q\big\}.
\]

Define $(\alpha,0)+(q,n)=(\frac{\alpha}{n}+q,n)$ if $n\neq0$. Although $S(Q)$ is not fine anymore, it is finitely generated over $S(Q)_0$.

\begin{remark}
We will use $q\in Q(\mathbf{Q})$ to represent an element of $S(Q)$ if there is no confusion. If we have chosen an origin in $\overline{X}$, we can identify $\mathbb{X}$ with $ \mathbb{Z}\oplus X$.
\end{remark}

Regard $S(Q)$ as a subset of $\mathbb{X}_\mathbf{R}$. The convex hull generated by $S(Q)$ in $\mathbb{X}_\mathbf{R}$ is a cone, and is denoted by $C(Q)$. Since $Q$ is a lattice polyhedron, $C(Q)$ is a rational polyhedral cone. If $Q$ is a lattice polytope, $C(Q)$ is strongly convex. We have
\[
S(Q)=C(Q)\cap \mathbb{X}, \text { and } Q=C(Q)\cap \mathbb{X}_{\mathbf{R},1}.
\]

\begin{definition}
Let $V$ be a $\mathbf{R}$ vector space. For any (piecewise) affine function $\varphi: Q\to V$, we associate a (piecewise) linear function $\tilde{\varphi}:C(Q)\to V$ by 
\[
\tilde{\varphi}(q):=\left\{
\begin{array}{ll}
\deg(q)\varphi(q/\deg(q)) &\text{if } \deg(q)\neq 0\\
\D\varphi(q) &\text{if }\deg(q)=0
\end{array}\right.
\]
\end{definition}

On the other side, given a (piecewise) linear function $\tilde{\varphi}: C(Q)\to V$, the restriction to $Q$ is an (piecewise) affine function. So there is a bijection between the set of linear functions on $C(Q)$ and the set of affine functions on $Q$. Define $\mathbb{R}$ to be the forgetful functor from the category of vector spaces to the category of affine spaces. A special case of the bijection $\varphi\mapsto \tilde{\varphi}$ is
\begin{corollary}
Let $U$ be an affine space, and $V$ a vector space over the same field. There is a natural isomorphism
\[
\Hom(U,\mathbb{R}(V))\cong\Hom(\mathbb{L}(U), V).
\]

In particular, $\mathbb{L}$ is the left adjoint to the forgetful functor $\mathbb{R}$. 
\end{corollary}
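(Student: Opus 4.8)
The plan is to derive the adjunction by specializing the bijection $\varphi\mapsto\tilde\varphi$ to the case where the polyhedron is the affine space $U$ itself and by upgrading the target from the ground field $k$ to an arbitrary vector space $V$, then checking that the resulting bijection is natural. First I would note that the formula
\[
\tilde\varphi(q)=\begin{cases}\deg(q)\,\varphi\bigl(q/\deg(q)\bigr)&\text{if }\deg(q)\neq 0,\\ \D\varphi(q)&\text{if }\deg(q)=0,\end{cases}
\]
makes sense for an arbitrary affine map $\varphi\colon U\to\mathbb{R}(V)$ (i.e.\ an affine map $U\to V$) and produces a set map $\tilde\varphi\colon\mathbb{L}(U)\to V$, since every element of $\mathbb{L}(U)$ is either $(q,t)$ with $t\neq 0$, on which $\tilde\varphi$ is $t\varphi(q)$, or lies in the degree-zero copy of the underlying vector space of $U$, on which $\tilde\varphi$ is the linear part $\D\varphi$. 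Conversely, to a linear $\Phi\colon\mathbb{L}(U)\to V$ I would attach its restriction $\Phi^{\flat}$ to the height-$1$ slice, $\Phi^{\flat}(q):=\Phi(q,1)$; this is an affine map $U\to\mathbb{R}(V)$ because, by construction, the affine structure on $U$ is the one induced from $\mathbb{L}(U)$ through that embedding. The identities $\widetilde{\Phi^{\flat}}=\Phi$ and $(\tilde\varphi)^{\flat}=\varphi$ follow formally: the second by setting $\deg(q)=1$, and the first because $\mathbb{L}(U)$ is generated under the given operations by the height-$1$ slice (every $(q,t)$ with $t\neq 0$ equals $t\cdot(q,1)$, and every degree-zero element $(v,0)$ equals $(p,1)+(q,-1)$ with $v=p-q$), so any two linear maps agreeing there agree everywhere.

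The one point requiring a computation is that $\tilde\varphi$ is actually $k$-linear for the addition and scalar multiplication on $\mathbb{L}(U)$. Rather than redo the case analysis, I would reduce to the scalar statement already recorded in the text — that the height-$1$ embedding induces a bijection between linear functions on $\mathbb{L}(U)$ and affine functions on $U$ (this is the $V=k$ instance of the corollary, and it pins the corresponding linear function down to be $\tilde g$). Then for any $\lambda\in V^{*}$ one checks termwise that $\lambda\circ\tilde\varphi=\widetilde{\lambda\circ\varphi}$, which is linear; since the linear functionals separate points of $V$, the elements $\tilde\varphi(x+y)-\tilde\varphi(x)-\tilde\varphi(y)$ and $\tilde\varphi(cx)-c\,\tilde\varphi(x)$ are killed by every $\lambda$, hence vanish, so $\tilde\varphi$ is linear. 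This establishes the bijection $\Hom(U,\mathbb{R}(V))\cong\Hom(\mathbb{L}(U),V)$.

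Finally I would verify naturality, which is what turns the bijection into an adjunction. Naturality in $V$: for linear $\psi\colon V\to V'$ one has $\widetilde{\psi\circ\varphi}=\psi\circ\tilde\varphi$ (read off termwise from the formula for $\tilde{\,\cdot\,}$), equivalently $(\psi\circ\Phi)^{\flat}=\psi\circ\Phi^{\flat}$. Naturality in $U$: for affine $\phi\colon U'\to U$, the functoriality of $\mathbb{L}$ recalled in the text gives $\mathbb{L}(\phi)\colon\mathbb{L}(U')\to\mathbb{L}(U)$ carrying the height-$1$ slice to the height-$1$ slice compatibly with $\phi$, whence $(\Phi\circ\mathbb{L}(\phi))^{\flat}=\Phi^{\flat}\circ\phi$. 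Thus the bijection is natural in both variables, which by definition says $\mathbb{L}\dashv\mathbb{R}$, with unit the height-$1$ embedding $U\to\mathbb{R}\,\mathbb{L}(U)$. The main (and only mildly nontrivial) obstacle is the linearity of $\tilde\varphi$, and as indicated the cleanest route is to bypass the case analysis by reducing it to the scalar bijection already in the text; everything else in the argument is purely formal bookkeeping.
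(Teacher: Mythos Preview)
Your proof is correct and follows essentially the same route as the paper: the corollary is obtained by specializing the bijection $\varphi\mapsto\tilde\varphi$ (already stated in the text for $V$-valued functions) to the case $Q=U$, so that $C(Q)=\mathbb{L}(U)$. The paper does not spell out a proof beyond the sentence ``A special case of the bijection $\varphi\mapsto\tilde\varphi$ is\ldots'', so your explicit verification of linearity (via reduction to the scalar case, which the paper asserts earlier) and of naturality are elaborations of what the paper leaves implicit rather than a different argument.
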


\begin{definition}
A piecewise affine (resp. linear) function $\varphi$ (resp. $\tilde{\varphi}$) is called integral if it is integral on every top-dimensional affine (resp. linear) domain. 
\end{definition}

\begin{remark}
A piecewise affine map ${\varphi}$ is integral if and only if $\tilde{\varphi}$ is integral. This integrality  is stronger than saying $\varphi$ takes integral values on each integral point. It also requires the slopes to be integral. 
\end{remark}

\section{Constructions of the Families}\label{families}
\subsection{Toric Constructions}\label{section toric constructions}
In this section, $k$ is an arbitrary commutative noetherian ring. We abuse the terminology ``toric variety" even when it is not over a field. A toric variety of a lattice polytope $\sigma$ is denoted by $X_\sigma$, and toric variety of a fan $\Sigma$ is denoted by $X_\Sigma$. 

Let $P$ be a toric monoid, i.e. $P$ is fine and saturated and the associated group $P^{\gp}$ is torsion free. Assume $\sigma_P:=\conv(P)$ in $P^{\gp}_\mathbf{R}$. $\sigma_P$ is a rational polyhedral cone. Introduce a partial order on $P^{\gp}_\mathbf{R}$.
\begin{definition}
 For $u,v \in P^{\gp}_{\mathbf{R}}$, we say $u$ is $P$-above $v$, and denote it by $u\stackrel{P}{\geqslant} v$, if $u-v\in \sigma_P$. We say $u$ is strictly $P$-above $v$ if in addition, $u-v\in \sigma_P\backslash P^*_\mathbf{R}$. 
 \end{definition}
 
Let $Q$ be a full dimensional lattice polytope of $\overline{X}_\mathbf{R}$. A paving $\mathscr{P}$ of $Q$ is a finite set of polytopes contained in $Q$, such that
\begin{enumerate}
\item For any two elements $\sigma, \tau\in \mathscr{P}$, the intersection $\sigma\cap\tau$ is a proper face of both $\sigma$ and $\tau$. 
\item Any face of a polytope $\sigma\in \mathscr{P}$ is again an element of $\mathscr{P}$.
\item The union $\cup_{\sigma\in\mathscr{P}}\sigma$ is $Q$.
\end{enumerate}

A paving is called a triangulation if all cells are simplices. A paving or a triangulation is called integral if all cells are lattice polytopes. A paving or a triangulation is regular or coherent if it is obtained from the affine regions of some piecewise affine function. Without explicit mention, all the pavings and triangulations will be integral and coherent. 

\begin{definition}
Let $\varphi$ be a piecewise affine function over $Q$ with values in $P^{\gp}_\mathbf{R}$. $\varphi$ is called $\mathscr{P}$-piecewise affine if the region where $\varphi$ is affine gives a paving $\mathscr{P}$, i.e. for each $\sigma\in \mathscr{P}_{\max}$, $\varphi\vert_\sigma$ is an element in $\mathbb{X}^*_\mathbf{R}\otimes P^{\gp}_\mathbf{R}$, and $\varphi\vert_{\sigma}$ is different for different $\sigma\in\mathscr{P}_{\max}$. In this case, we also call $\mathscr{P}$ the paving of $\varphi$. 

$\varphi$ is called integral if all $\varphi\vert_\sigma\in \mathbb{X}^*\otimes P^{\gp}$. The linear space of piecewise affine functions whose paving is coarser than $\mathscr{P}$ is denoted by $PA(\mathscr{P})$ or $PA(\mathscr{P}, P^{\gp}_\mathbf{R})$. The subset of integral functions is denoted by $PA(\mathscr{P},\mathbf{Z})$. 
\end{definition}

\begin{definition}[bending parameters]\label{definition of bending parameter}
For each codimension $1$ cell $\rho\in\mathscr{P}$ contained in maximal cells $\sigma_+,\sigma_-\in \mathscr{P}$, we can write
\[
\varphi\vert_{\sigma_+}-\varphi\vert_{\sigma_-}=n_\rho\otimes p_\rho,
\]

where $n_\rho$ is the unique primitive element that defines $\rho$ and is positive on $\sigma_+$, and $p_\rho\in P^{\gp}_\mathbf{R}$ is called the bending parameter.
\end{definition} 

\begin{definition}
A piecewise affine function $\varphi$ is $P$-convex if for every codimension one cell $\rho\in \mathscr{P}$, $p_\rho\in P$. It is strictly $P$-convex if all $p_\rho\in P\backslash P^*$. 
\end{definition}

If $\varphi$ is affine over $Q$, the bending parameters are all trivial. If $\varphi, \varphi'\in PA(\mathscr{P})$ have bending parameters $\{p_\rho\}$, $\{p_\rho'\}$ respectively, the bending parameters for $\varphi+\varphi'$ is $\{p_\rho+p'_\rho\}$. Therefore, an element in $PA(\mathscr{P})/Aff$ (with values in $P^{\gp}_\mathbf{R}$) is determined by the collection of bending parameters. 

Let $Q\subset \overline{X}_{\mathbf{R}}$ be a full dimensional lattice polytope with integal points $I:=Q(\mathbf{Z})$, and $\mathscr{P}$ be an integral paving. Fix a sharp toric monoid $P$. Let $\varphi: Q\to P^{\gp}_{\mathbf{R}}$ be a $\mathscr{P}$-piecewise affine, $P$-convex, integral map. Define the lattice polyhedron
\[
Q_\varphi:=\Big\{(\alpha,h)\in Q\times P^{\gp}_\mathbf{R}: h\stackrel{P}{\geqslant} \varphi(\alpha)\Big\}.
\]

Define the $k$-algebra $R_\varphi:=k[S(Q_\varphi)]$ to be the semigroup algebra of $S(Q_\varphi)$. Define $S:=\spec k[P]$ to be the spectrum of the semigroup algebra of $P$, and $\mathcal{X}_{\varphi}:=\proj R_\varphi$. Notice that $R_{\varphi,0}=k[P]$. We have $\pi:\mathcal{X}_{\varphi}\to S$. $\pi$ is projective and of finite type. The line bundle $\mathcal{L}:=\mathcal{O}(1)$ is $\pi$-ample. 

The natural inclusion $P\to S(Q_\varphi)$ is an integral morphism of integral monoids. $S(Q_\varphi)^{\gp}=\mathbb{L}(\overline{X}\times P^{\gp})\cong \mathbb{X}\times P^{\gp}$. Under this isomorphism, the morphism $P^{\gp}\to S(Q_\varphi)^{\gp}$ is the homomorphism
\begin{align*}
P^{\gp}&\longrightarrow \mathbb{X}\times P^{\gp},\\
p&\longmapsto (0,p).
\end{align*}

Then the cokernel of $f^\flat: P\to S(Q_\varphi)$ is equal to the image of $S(Q_\varphi)$ in $S(Q_\varphi)^{\gp}/P^{\gp}=\mathbb{X}$, which is $S(Q)$. Denote the projection $S(Q_\varphi)\to S(Q)$ by $b$. Since $P$ is sharp, by (\cite{ols08} Lemma 3.1.32), for any $q\in S(Q)$, there exists a unique element $\tilde{q}\in b^{-1}(q)$ such that $\tilde{q}\stackrel{P}{\leqslant}x$ for all $x\in b^{-1}(q)$. For any $q\in S(Q)$, define $\vartheta_q:=\mathrm{X}^{\tilde{q}}$. By definition, for any $x\in b^{-1}(q)$, $x-\tilde{q}\in P$. Therefore

\begin{proposition}
$R_{\varphi}$ is a free $k[P]$-module generated by $\{\vartheta_q\}$ for all $q\in S(Q)$. In particular, the basis is parametrized by the rational points $Q(\mathbf{Q})\cup\{0\}$.
\end{proposition}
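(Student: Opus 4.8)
The plan is to show that $R_\varphi = k[S(Q_\varphi)]$ decomposes as a $k[P]$-module into a direct sum of rank-one free summands, one for each fiber of the projection $b\colon S(Q_\varphi)\to S(Q)$, and then to identify each such summand with $k[P]\cdot\vartheta_q$. First I would recall from the discussion preceding the statement that $b$ is the cokernel map of $f^\flat\colon P\to S(Q_\varphi)$ in the sense that $S(Q_\varphi)^{\gp}/P^{\gp}\cong \mathbb{X}$ and the image of $S(Q_\varphi)$ there is exactly $S(Q)$; in particular $b$ is surjective. As a $k$-vector space, $R_\varphi=\bigoplus_{x\in S(Q_\varphi)} k\cdot\mathrm{X}^x$, and grouping the monomials by the value of $b$ gives $R_\varphi=\bigoplus_{q\in S(Q)}\big(\bigoplus_{x\in b^{-1}(q)} k\cdot\mathrm{X}^x\big)$. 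The $k[P]$-action (coming from $f^\flat$) preserves each inner summand, since for $p\in P$ we have $b(x+f^\flat(p))=b(x)$ because $f^\flat(p)$ maps to $0$ in $\mathbb{X}$.

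Next I would pin down the structure of each piece $M_q:=\bigoplus_{x\in b^{-1}(q)} k\cdot\mathrm{X}^x$ as a $k[P]$-module. By the cited result (\cite{ols08} Lemma 3.1.32), since $P$ is sharp, the fiber $b^{-1}(q)$ has a unique $\stackrel{P}{\leqslant}$-minimal element $\tilde q$; and by definition of the partial order, $x-\tilde q\in\sigma_P\cap(S(Q_\varphi)^{\gp})$ for every $x\in b^{-1}(q)$. The key claim to verify is that in fact $x-\tilde q\in P$ (not merely in $\sigma_P\cap P^{\gp}$), so that $\mathrm{X}^x = \mathrm{X}^{f^\flat(x-\tilde q)}\cdot\vartheta_q$ lies in $k[P]\cdot\vartheta_q$. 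Granting this, the map $k[P]\to M_q$, $\mathrm{X}^p\mapsto \mathrm{X}^p\cdot\vartheta_q=\mathrm{X}^{f^\flat(p)+\tilde q}$ is surjective; it is injective because $f^\flat$ is injective on monomials and distinct $p$ give distinct exponents $f^\flat(p)+\tilde q$ in $S(Q_\varphi)$. Hence $M_q\cong k[P]$ as a free $k[P]$-module of rank one with basis $\vartheta_q$, and summing over $q\in S(Q)$ gives that $R_\varphi$ is free over $k[P]$ with basis $\{\vartheta_q\}_{q\in S(Q)}$. The final sentence of the statement is then just the identification $S(Q)\leftrightarrow Q(\mathbf{Q})\cup\{0\}$ recorded in Section~\ref{the cone construction}.

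The main obstacle is the integrality claim $x-\tilde q\in P$ rather than just $x-\tilde q\in\sigma_P\cap P^{\gp}$. This is precisely the content one extracts from the explicit description of $S(Q_\varphi)$: an element of $S(Q_\varphi)$ of degree $n$ in the $Q$-direction is a pair $(\alpha,h)$ with $\alpha\in Q(1/n)$ and $h\stackrel{P}{\geqslant}\varphi(\alpha)$, i.e. $h=\tfrac{1}{n}(n\varphi(\alpha)) + (\text{element of }P\otimes\mathbf{Q})$ suitably integral; two elements with the same image $q=(\alpha,n)$ under $b$ differ by a genuine element of $P$, and $\tilde q$ is obtained by taking the smallest such $h$, namely the $P$-minimal lift of $n\varphi(\alpha)$, which exists and is integral because $\varphi$ is $P$-convex and integral. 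So the argument comes down to checking that the fibers of $b$ are exactly the cosets $\tilde q + P$ inside $S(Q_\varphi)$; this is essentially a restatement of (\cite{ols08} Lemma 3.1.32) together with the fact that $f^\flat\colon P\to S(Q_\varphi)$ is an integral morphism of integral monoids, which was already noted. Once that is in hand the rest is bookkeeping with semigroup algebras.
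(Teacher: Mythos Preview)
Your approach is correct and is essentially the same as the paper's: decompose $R_\varphi$ along the fibers of $b$, use the minimal lift $\tilde q$ from (\cite{ols08} Lemma~3.1.32), and identify each fiber-summand with $k[P]\cdot\vartheta_q$.

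One remark: the step you flag as the ``main obstacle''---upgrading $x-\tilde q\in\sigma_P\cap P^{\gp}$ to $x-\tilde q\in P$---is in fact immediate and does not require the explicit description of $S(Q_\varphi)$ or the integrality of $\varphi$. Since $b(x)=b(\tilde q)$, the difference $x-\tilde q$ lies in $P^{\gp}$ (the kernel of $S(Q_\varphi)^{\gp}\to\mathbb{X}$); the partial order gives $x-\tilde q\in\sigma_P$; and because $P$ is a \emph{toric} monoid it is saturated, so $\sigma_P\cap P^{\gp}=P$. This is exactly why the paper can simply write ``By definition, for any $x\in b^{-1}(q)$, $x-\tilde q\in P$'' and proceed directly to the proposition. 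Your paragraph invoking the concrete shape of $Q_\varphi$ and the integrality of $\varphi$ is therefore unnecessary for this step (though integrality is what later gives the explicit formula $\vartheta_q=\mathrm{X}^{(q,\varphi(q),n)}$).
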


If $\varphi$ is integral, then $\vartheta_q=\mathrm{X}^{(q,\varphi(q),n)}$. The multiplication rule is, for $\alpha\in Q(\frac{1}{n}), \beta\in Q(\frac{1}{m})$, and
\[
\gamma=\frac{n}{n+m}\alpha+\frac{m}{n+m}\beta\in Q(\frac{1}{n+m}),
\]

we have 
\[
\vartheta_\alpha\cdot\vartheta_\beta=\mathrm{X}^{n\varphi(\alpha)+m\varphi(\beta)-(n+m)\varphi(\gamma)}\vartheta_\gamma.
\]

For $\varphi$, define a monoid $S(Q)\rtimes P$. As a set $S(Q)\rtimes P=S(Q)\times P$. The addition is defined by
\[
(\alpha, p)+(\beta,q)=(\alpha+\beta, p+q+n\varphi(\alpha)+m\varphi(\beta)-(n+m)\varphi(\gamma))
\]

The conclusion is, 
\begin{proposition}
As monoids, 
\[
S(Q_\varphi)\cong S(Q)\rtimes P. 
\]
\end{proposition}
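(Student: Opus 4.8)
The plan is to identify both monoids with explicit subsets of the common group $S(Q_\varphi)^{\gp}=\mathbb{L}(\overline X\times P^{\gp})\cong\mathbb X\times P^{\gp}$. Write $\iota\colon P\hookrightarrow S(Q_\varphi)$ for the structure map $p\mapsto(0,p)$, and for $q\in S(Q)$ let $\tilde q\in b^{-1}(q)$ be the $\stackrel{P}{\leqslant}$-minimal lift furnished by (\cite{ols08} Lemma 3.1.32), so that $\vartheta_q=\X^{\tilde q}$. By the description of $R_\varphi=k[S(Q_\varphi)]$ as a free $k[P]$-module with basis $\{\vartheta_q\}_{q\in S(Q)}$ obtained above, the family $\{\X^{\iota(p)}\vartheta_q=\X^{\tilde q+\iota(p)}:p\in P,\ q\in S(Q)\}$ is a $k$-basis of $R_\varphi$. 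Since $\{\X^x:x\in S(Q_\varphi)\}$ is also a $k$-basis and the former family is a subfamily of the latter with the same span, the two coincide; thus $S(Q_\varphi)=\{\tilde q+\iota(p):q\in S(Q),\,p\in P\}$. Because $\iota$ is injective ($P$ being fine) and $b$ recovers $q$, the map
\[
\Phi\colon S(Q)\rtimes P\longrightarrow S(Q_\varphi),\qquad (q,p)\longmapsto \tilde q+\iota(p),
\]
is a bijection of sets, and it preserves the gradings since $\deg\tilde q=\deg q$ and $\deg\iota(p)=0$.

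It then remains to check that $\Phi$ is additive. Fix $q\in Q(1/n)$ and $q'\in Q(1/m)$ and set $\gamma=\frac{n}{n+m}q+\frac{m}{n+m}q'\in Q(1/(n+m))$, so that the sum of $q$ and $q'$ in $S(Q)$ is the degree-$(n+m)$ element $\gamma$ (by the addition on $\mathbb{L}$). In $S(Q_\varphi)$ one has $\Phi(q,p)+\Phi(q',p')=(\tilde q+\tilde{q'})+\iota(p+p')$, and $\tilde q+\tilde{q'}$ lies over $\gamma$; hence $\tilde\gamma\stackrel{P}{\leqslant}\tilde q+\tilde{q'}$ by the minimality of $\tilde\gamma$, i.e. $c:=(\tilde q+\tilde{q'})-\tilde\gamma\in\sigma_P$, and since $c\in P^{\gp}$ and $P$ is saturated we get $c\in P$. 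Thus $\Phi(q,p)+\Phi(q',p')=\tilde\gamma+\iota(p+p'+c)=\Phi(\gamma,\,p+p'+c)$. Finally, since $\varphi$ is integral we may write $\tilde q=\X^{(q,\varphi(q),n)}$, and computing the sum $(q,\varphi(q),n)+(q',\varphi(q'),m)$ against $(\gamma,\varphi(\gamma),n+m)$ through the piecewise formulas defining $\mathbb{L}(\overline X\times P^{\gp})$ gives $c=n\varphi(q)+m\varphi(q')-(n+m)\varphi(\gamma)$, precisely the bending term in the definition of $S(Q)\rtimes P$. Therefore $\Phi$ intertwines the two additions; the remaining cases, in which $q$ or $q'$ equals $0\in S(Q)$, are immediate because then $\tilde q=0$ is the identity of $S(Q_\varphi)$. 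A bijective monoid homomorphism is an isomorphism, so $S(Q_\varphi)\cong S(Q)\rtimes P$, and in particular this confirms that $S(Q)\rtimes P$ is a monoid.

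The substantive input is entirely contained in what precedes: the free $k[P]$-module structure of $R_\varphi$ with its $\vartheta$-basis, the minimal-lift property of $\tilde q$, and the multiplication rule $\vartheta_\alpha\cdot\vartheta_\beta=\X^{\,n\varphi(\alpha)+m\varphi(\beta)-(n+m)\varphi(\gamma)}\vartheta_\gamma$. I expect the only real work to be bookkeeping: keeping the identifications $S(Q_\varphi)^{\gp}\cong\mathbb X\times P^{\gp}\cong\mathbb{L}(\overline X\times P^{\gp})$ consistent, and carrying out the final computation of $c$ via the case-by-case formulas for addition in $\mathbb{L}$. No genuinely new idea should be required.
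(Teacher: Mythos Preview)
Your argument is correct and is exactly the unpacking of what the paper intends: the paper gives no separate proof, writing only ``The conclusion is,'' since the isomorphism is immediate from the free $k[P]$-basis $\{\vartheta_q\}$ and the multiplication rule $\vartheta_\alpha\cdot\vartheta_\beta=\X^{\,n\varphi(\alpha)+m\varphi(\beta)-(n+m)\varphi(\gamma)}\vartheta_\gamma$ established just before. One cosmetic slip: $\tilde q$ is a monoid element, so you should write $\tilde q=(q,\varphi(q),n)$ rather than $\tilde q=\X^{(q,\varphi(q),n)}$.
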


Define $\mathbb{T}:=\spec k[\mathbb{X}]$ and $T:=\spec k[X]$. The exact sequence
\[
\begin{diagram}
0&\rTo &X &\rTo &\mathbb{X}&\rTo^\deg&\mathbf{Z}&\rTo 0
\end{diagram}
\]

induces an exact sequence
\[
\begin{diagram}
1&\rTo&\mathbb{G}_m&\rTo&\mathbb{T}&\rTo&T&\rTo&1.
\end{diagram}
\]

The $\mathbb{X}$-grading of $S(Q_\varphi)$ defines a $\mathbb{T}$-action $\varrho$ on $k[S(Q_\varphi)]$. Since the exact sequence splits, $\varrho$ induces a $T$-linearization of $\mathcal{L}$. Any global section $\vartheta\in H^0(\mathcal{X}_\varphi,\mathcal{L})$ is decomposed into eigenvectors of $T$-actions.
\begin{equation}\label{Fourier decomposition for GKZ}
\vartheta:=\sum_{\omega\in Q(\mathbf{Z})}c_\omega\vartheta_\omega,
\end{equation}

for $c_\omega\in k[P]$. Define $\Theta:=(\vartheta)_0$. $\Theta$ is a divisor of $\mathcal{X}_\varphi$, and is flat over $S$. 

Let $S=\spec k[P]$ and $S'=\spec k[P']$ be two affine toric varieties, and $f: S'\to S$ is a toric map induced by a homomorphism $f^\flat:P\to P'$. Then any $P$-convex piecewise affine function $\varphi$ induces a $P'$-convex piecewise affine function $\varphi'$ in the following way. Assume the paving of $\varphi$ is $\mathscr{P}$. $\varphi$ defines a collection of pending parameters $\{p_\rho\}$ for $\mathscr{P}$. Define new bending parameters $\{p_\rho'=f^\flat\circ p_\rho\}$. They determine a piecewise affine function $\varphi'$ whose paving is coarser than $\mathscr{P}$.
\begin{corollary}\label{GKZ functorial direct}
The construction is functorial, i.e., the following diagram is cartesian.
\[
\begin{diagram}
\mathcal{X}_{\varphi'}&\rTo&\mathcal{X}_\varphi\\
\dTo&&\dTo\\
S'&\rTo&S.\\
\end{diagram}
\]
\end{corollary}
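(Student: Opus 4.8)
The plan is to verify the cartesian property at the level of graded semigroup algebras, since $\mathcal{X}_\varphi=\proj R_\varphi$ and $\mathcal{X}_{\varphi'}=\proj R_{\varphi'}$ with $R_\varphi=k[S(Q_\varphi)]$, $R_{\varphi'}=k[S(Q_{\varphi'})]$. Concretely, I would first show that the natural square of graded rings
\[
\begin{diagram}
R_\varphi&\rTo&R_{\varphi'}\\
\uTo&&\uTo\\
k[P]&\rTo&k[P']
\end{diagram}
\]
exhibits $R_{\varphi'}$ as the degree-preserving base change $R_\varphi\otimes_{k[P]}k[P']$. By the preceding propositions, $R_\varphi$ is a free $k[P]$-module on the theta basis $\{\vartheta_q\}_{q\in S(Q)}$, and likewise $R_{\varphi'}$ is free over $k[P']$ on $\{\vartheta'_q\}_{q\in S(Q)}$; both bases are indexed by the \emph{same} set $Q(\mathbf{Q})\cup\{0\}$, so as $k[P']$-modules both sides of the claimed isomorphism are free on $S(Q)$. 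It therefore suffices to check that the multiplication constants match after applying $f^\flat$: for $\alpha\in Q(\tfrac1n)$, $\beta\in Q(\tfrac1m)$ and $\gamma=\tfrac{n}{n+m}\alpha+\tfrac{m}{n+m}\beta$, the structure constant in $R_\varphi$ is $\mathrm{X}^{\,n\varphi(\alpha)+m\varphi(\beta)-(n+m)\varphi(\gamma)}\in k[P]$, and the corresponding constant in $R_{\varphi'}$ is $\mathrm{X}^{\,n\varphi'(\alpha)+m\varphi'(\beta)-(n+m)\varphi'(\gamma)}\in k[P']$. The key identity to establish is exactly
\[
n\varphi'(\alpha)+m\varphi'(\beta)-(n+m)\varphi'(\gamma)=f^\flat\big(n\varphi(\alpha)+m\varphi(\beta)-(n+m)\varphi(\gamma)\big)
\]
in $P'^{\gp}$, which is the heart of the argument.

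To prove that identity, I would use the description of an element of $PA(\mathscr{P})/Aff$ by its bending parameters. The left-hand side, as a function of the collinear-over-one-cell data, is a $\mathbf{Z}$-linear combination of the bending parameters $\{p'_\rho\}$ of $\varphi'$; more precisely, the expression $n\psi(\alpha)+m\psi(\beta)-(n+m)\psi(\gamma)$ for a $\mathscr{P}$-piecewise affine $\psi$ depends only on the class of $\psi$ in $PA(\mathscr{P})/Aff$ (it vanishes on affine functions, since an affine function composed with the barycentric combination $\gamma$ gives $n\psi(\alpha)+m\psi(\beta)=(n+m)\psi(\gamma)$), hence is a fixed integral linear functional $\Lambda_{\alpha,\beta}$ applied to the tuple of bending parameters. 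Since by construction $p'_\rho=f^\flat(p_\rho)$ for every codimension-one cell $\rho$ and $f^\flat$ is a group homomorphism on $P^{\gp}\to P'^{\gp}$, we get $\Lambda_{\alpha,\beta}(\{p'_\rho\})=f^\flat\big(\Lambda_{\alpha,\beta}(\{p_\rho\})\big)$, which is the desired equation. (One subtlety: $\varphi'$ may have a strictly coarser paving than $\mathscr{P}$, namely when some $p_\rho$ maps to $0$; but this only sets the corresponding coordinate of the tuple to zero and does not affect the linear-functional computation, and the formula for $\vartheta'_\alpha\cdot\vartheta'_\beta$ still holds verbatim with the understanding that $\varphi'$ is then affine across that wall.) I would also remark that the same bending-parameter bookkeeping shows $S(Q_{\varphi'})\cong S(Q)\rtimes_{\varphi'}P'$ is obtained from $S(Q)\rtimes_\varphi P$ by pushing out along $f^\flat$, which is the semigroup-level incarnation of the same fact.

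Finally I would pass from rings to schemes. Having identified $R_{\varphi'}\cong R_\varphi\otimes_{k[P]}k[P']$ as graded $k[P']$-algebras, the cartesian square of schemes follows because $\proj$ commutes with base change along $\spec k[P']\to\spec k[P]$: the pullback of $\mathcal{X}_\varphi=\proj R_\varphi$ along $S'\to S$ is $\proj\big(R_\varphi\otimes_{k[P]}k[P']\big)=\proj R_{\varphi'}=\mathcal{X}_{\varphi'}$, compatibly with the structure maps to $S'$ and $S$. (Here the graded ring is generated in degree one over its degree-zero part, so no hypothesis beyond the standard base-change statement for $\proj$ is needed, and there are no issues with the irrelevant ideal.) The main obstacle is the middle step — pinning down that the structure constants transform correctly under $f^\flat$ — and the cleanest way to handle it is the bending-parameter linearity observation above rather than a direct manipulation of the values $\varphi(\alpha),\varphi(\beta),\varphi(\gamma)$, which are only piecewise-defined and awkward to track through $f^\flat$ individually.
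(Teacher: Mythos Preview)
Your proposal is correct and is essentially the paper's argument unpacked. The paper's proof is a one-liner: it writes down the monoid map
\[
S(Q)\rtimes_\varphi P\longrightarrow S(Q)\rtimes_{\varphi'} P',\qquad (\alpha,p)\longmapsto(\alpha,f^\flat(p)),
\]
and asserts this realizes the base change. For that map to be a monoid homomorphism one needs precisely the identity you isolate,
\[
n\varphi'(\alpha)+m\varphi'(\beta)-(n+m)\varphi'(\gamma)=f^\flat\big(n\varphi(\alpha)+m\varphi(\beta)-(n+m)\varphi(\gamma)\big),
\]
so your ``heart of the argument'' is exactly what the paper leaves implicit. Your verification of this identity via the observation that $\psi\mapsto n\psi(\alpha)+m\psi(\beta)-(n+m)\psi(\gamma)$ kills affine functions and is therefore linear in the bending parameters is a clean way to see it, and your remark that $S(Q)\rtimes_{\varphi'}P'$ is the pushout of $S(Q)\rtimes_\varphi P$ along $f^\flat$ is literally the paper's statement. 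The only difference is presentation: the paper works directly at the monoid level, while you pass through the free $k[P]$-module structure on $R_\varphi$ and check structure constants; both amount to the same computation.
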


\begin{proof}
The morphism $\mathcal{X}_{\varphi'}\to \mathcal{X}_\varphi$ is induced from the base change
\begin{align*}
S(Q)\rtimes P&\to S(Q)\rtimes P'\\
(\alpha, p)&\mapsto (\alpha,f^\flat(p)).
\end{align*}
\end{proof}

\begin{corollary}[Functorial]\label{GKZ functorial}
Let $S$, $S'$, $P$, $P'$ and $f$ be as above. Let $\varphi$ be a $P$-convex piecewise affine function, and $\varphi'$ be a $P'$-convex piecewise affine functions. Assume that $\psi=f^\flat\varphi-\varphi'$ takes values in $(P')^*$, the invertible elements in $P'$, then we still have the pull back diagram
\[
\begin{diagram}
\mathcal{X}_{\varphi'}&\rTo &\mathcal{X}_\varphi\\
\dTo&&\dTo\\
S'&\rTo&S.\\
\end{diagram}
\]
\end{corollary}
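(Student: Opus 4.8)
The plan is to reduce Corollary~\ref{GKZ functorial} to Corollary~\ref{GKZ functorial direct}. The key observation is that the hypothesis ``$\psi = f^\flat\varphi - \varphi'$ takes values in $(P')^*$'' means $\varphi'$ differs from the ``naive'' pullback $f^\flat\varphi$ (the one appearing in Corollary~\ref{GKZ functorial direct}) by a piecewise affine function with values in the unit group $(P')^*$. Since $\psi$ is piecewise affine with the same or coarser paving, and takes values in the invertible elements, its bending parameters $p_\rho(\psi)$ lie in $(P')^*$; but $\varphi'$ is assumed $P'$-convex, so $p_\rho(\varphi') \in P'$, and $p_\rho(f^\flat\varphi) = f^\flat(p_\rho(\varphi)) \in P'$ as well, consistent with $p_\rho(\varphi') = f^\flat(p_\rho(\varphi)) - p_\rho(\psi)$. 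The upshot is that the bending-parameter data of $\varphi'$ and $f^\flat\varphi$ agree up to units of $P'$.

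First I would let $\varphi'' := f^\flat\varphi$ denote the induced function from Corollary~\ref{GKZ functorial direct}, so that by that corollary $\mathcal{X}_{\varphi''} = \mathcal{X}_\varphi\times_S S'$. It then suffices to produce a canonical $S'$-isomorphism $\mathcal{X}_{\varphi'} \cong \mathcal{X}_{\varphi''}$ compatible with the polarizations. I would construct this at the level of graded monoid algebras: using Proposition~\ref{...} (the semidirect product description $S(Q_{\varphi'}) \cong S(Q)\rtimes_{\varphi'} P'$ and likewise for $\varphi''$), I would write down the map
\[
S(Q)\rtimes_{\varphi''} P' \longrightarrow S(Q)\rtimes_{\varphi'} P', \qquad (\alpha,p)\longmapsto (\alpha, p + \widetilde{\psi}(\alpha)),
\]
where $\widetilde{\psi}$ is the linearization (in the sense of the $\tilde{\varphi}$ construction from Section~\ref{the cone construction}) of $\psi$ restricted appropriately, viewed as a function with values in $(P')^* \subset (P')^{\gp}$. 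One checks this is a monoid homomorphism: the failure of $(\alpha,p)\mapsto(\alpha,p)$ to be a homomorphism between the two semidirect products is exactly measured by the cocycle $n\psi(\alpha)+m\psi(\beta)-(n+m)\psi(\gamma)$, which the correction term $\widetilde{\psi}$ absorbs since $\psi$ is piecewise affine. Because $\psi$ takes values in the \emph{units} $(P')^*$, the correction term $\widetilde{\psi}(\alpha)$ is invertible, so the map is an isomorphism of monoids, hence induces an isomorphism of $k[P']$-algebras respecting the $\mathbb{X}$-grading; taking $\proj$ gives the desired $S'$-isomorphism $\mathcal{X}_{\varphi'}\cong\mathcal{X}_{\varphi''}$ intertwining $\mathcal{O}(1)$'s. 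Composing with Corollary~\ref{GKZ functorial direct} yields the cartesian square.

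The main obstacle I anticipate is bookkeeping rather than conceptual: one must make precise the sense in which $\psi$, a piecewise affine function, has a well-defined ``linearization'' $\widetilde{\psi}$ landing in $(P')^*$ on each cell in a way that is globally consistent on $S(Q)$ (i.e. independent of which cell $\alpha$ is taken in), and verify that the cocycle identity it must satisfy holds precisely because $\psi$ is genuinely piecewise affine with paving coarser than $\mathscr{P}$. The cleanest route is probably to avoid cells altogether: since $\psi$ takes values in the group $(P')^*$ and is piecewise affine, the associated element $\tilde\psi$ of $PA(\mathscr{P},(P')^*)$ defines via the cone construction of Section~\ref{the cone construction} a grading-preserving automorphism-type twist of $k[S(Q)\rtimes P']$, and one simply observes that twisting $\varphi''$ by $\psi$ returns $\varphi'$. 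I would also double-check the edge case where the paving of $\varphi'$ is strictly coarser than that of $\varphi$ — but this causes no trouble since passing to a coarser paving only forgets some of the (then necessarily unit-valued, hence here trivial-mod-units) bending data.
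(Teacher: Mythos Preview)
Your proposal is correct and matches the paper's proof essentially line for line: the paper also reduces to Corollary~\ref{GKZ functorial direct} by exhibiting the monoid isomorphism $S(Q)\rtimes P'\to S(Q)\rtimes P'$, $(\alpha,p)\mapsto(\alpha,p+\deg(\alpha)\psi(\alpha))$, which is exactly your $(\alpha,p)\mapsto(\alpha,p+\tilde\psi(\alpha))$, and observes it is invertible because $\psi$ lands in $(P')^*$. The paper is simply terser---it does not pause over the bookkeeping you flag, since the semidirect-product multiplication rule makes the homomorphism check immediate.
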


\begin{proof}
We only need to check that $\mathcal{X}_{f^\flat\varphi}\cong\mathcal{X}_{\varphi'}$ over $S'$. The isomorphism is given by
\begin{align}
S(Q)\rtimes P' &\to S(Q)\rtimes P'\\
(\alpha,p)&\mapsto (\alpha,p+\deg(\alpha)\psi(\alpha)).
\end{align}
It preserves the multiplication. It is an isomorphism because $\psi$ takes values in $(P')^*$. 
\end{proof}

Now choose $P'=\mathbf{N}$ and $S'=\mathbf{A}^1_k$. For any toric divisor over $S$ with center in the toric boundary of $S$, we have a discrete valuation $v: P\to \mathbf{N}$. Use $v$ as $f^\flat$. By Corollary~\ref{GKZ functorial direct}, the pull--back $f^*\mathcal{X}_\varphi/\mathbf{A}^1$ is the $1$-parameter family $\mathcal{X}_{\varphi'}/\mathbf{A}^1$ defined by the ordinary convex function $\varphi'=v\circ\varphi$. Assume the paving for $\varphi'$ is $\mathscr{P}'$ which is coarser than $\mathscr{P}$. The generic fiber of $\mathcal{X}_{\varphi'}$ is the polarized toric variety $(X_Q,\mathcal{L})$ given by the lattice polytope $Q$. The central fiber of $\mathcal{X}_{\varphi'}$ is $X_{\mathscr{P}'}=\varinjlim_{\sigma\in \mathscr{P}'}X_\sigma$, the gluing of toric varieties $X_\sigma$'s as indicated by the paving $\mathscr{P}'$. Note that $X_{\mathscr{P}'}$ is denoted by $(P,L)[\Delta',1]$ in (\cite{Alex02} Definition 2.4.2, Corollary 2.4.5) for $\Delta'$ the complex obtained from the paving $\mathscr{P}'$.

Consider the section $\vartheta$ in Equation~\eqref{Fourier decomposition for GKZ}. The pull--back of $\Theta$ is stable for $\mathcal{X}_{\varphi'}$ if and only if the coefficients the residue of $c_\omega$ at the origin of $\mathbf{A}^1$ does not vanish for any $\omega\in \mathscr{P}'\cap I$ (\cite{Alex02} Lemma 2.6.1). Therefore, we have

\begin{proposition}\label{stable toric pairs}
Let $(\mathcal{X}_\varphi,\mathcal{L},\varrho)/S$ be constructed as above. If we choose $\vartheta$ such that $c_\omega$ does not vanish at any point $s\in S$ for any $\omega\in Q(\mathbf{Z})\cap\mathscr{P}$, then $(\mathcal{X}, \Theta,\varrho)/S$ is a stable toric pair and is a degeneration of the pair $(X_Q,\Theta)$. 
\end{proposition}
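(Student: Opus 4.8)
The plan is to reduce the statement to a fibrewise check. By the constructions recalled above, $\pi\colon\mathcal{X}_\varphi\to S$ is already flat (since $R_\varphi$ is $k[P]$-free) and projective, $\mathcal{L}=\mathcal{O}(1)$ is $\pi$-ample, $\Theta=(\vartheta)_0$ is an effective Cartier divisor flat over $S$, and $\varrho$ provides the fibrewise torus action together with the $T$-linearization of $\mathcal{L}$. By the definition of a stable family of toric pairs (\cite{Alex02}) it then suffices to show that every geometric fibre $(\mathcal{X}_{\varphi,\bar s},\Theta_{\bar s})$ is a stable toric pair --- that is, $\mathcal{X}_{\varphi,\bar s}$ is a stable toric variety and $\Theta_{\bar s}$ is an effective, ample Cartier divisor containing no torus orbit --- and, for the last assertion, to identify the generic fibre with a toric pair on $(X_Q,\mathcal{L})$.

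First I would fix a geometric point $\bar s\in S$ lying in the orbit $O_\tau$ that corresponds to a face $F\subseteq P$. Using the identification $S(Q_\varphi)\cong S(Q)\rtimes P$ and the $k[P]$-freeness of $R_\varphi$ on $\{\vartheta_q\}_{q\in S(Q)}$, the fibre ring $R_\varphi\otimes_{k[P]}k(\bar s)$ is the ``twisted'' semigroup algebra of $S(Q)$ whose structure constants $\mathrm{X}^{n\varphi(\alpha)+m\varphi(\beta)-(n+m)\varphi(\gamma)}$ become a nonzero scalar or $0$ according to whether the exponent lies in $F$; equivalently, by Corollary~\ref{GKZ functorial direct} one pulls back along the toric map $\mathbf{A}^1_k\to S$ given by a discrete valuation $v\colon P\to\mathbf{N}$ with $v^{-1}(0)\cap P=F$, exhibiting $\mathcal{X}_{\varphi,\bar s}$ as the central fibre of the one-parameter family $\mathcal{X}_{v\circ\varphi}/\mathbf{A}^1$. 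As recalled just before the statement, this fibre is $X_{\mathscr{P}_F}=\varinjlim_{\sigma\in\mathscr{P}_F}X_\sigma$, where $\mathscr{P}_F$ is the coarsening of $\mathscr{P}$ obtained by deleting exactly the walls $\rho$ with $p_\rho\in F$; since $\mathscr{P}_F$ is a coherent integral paving of $Q$, $X_{\mathscr{P}_F}$ is a stable toric variety (\cite{Alex02}). At the generic point every bending parameter becomes invertible --- no wall of $\mathscr{P}$ can have $p_\rho=0$, since the $\varphi|_\sigma$, $\sigma\in\mathscr{P}_{\max}$, are pairwise distinct --- so $\mathscr{P}_F$ is then the trivial paving and the generic fibre is $(X_Q,\mathcal{L})$.

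Next I would analyse $\Theta_{\bar s}$ using the expansion $\vartheta=\sum_{\omega\in Q(\mathbf{Z})}c_\omega\vartheta_\omega$. On a component $X_\sigma$ ($\sigma\in(\mathscr{P}_F)_{\max}$) one has $\vartheta_{\omega,\bar s}|_{X_\sigma}=\mathrm{X}^\omega$ if $\omega\in\sigma$ and $0$ otherwise, so $\vartheta_{\bar s}|_{X_\sigma}=\sum_{\omega\in\sigma\cap I}c_\omega(\bar s)\mathrm{X}^\omega$. For a torus orbit $O_\delta$ (a cell $\delta\in\mathscr{P}_F$), $\mathrm{X}^\omega|_{O_\delta}$ is a nonzero character of the orbit torus when $\omega\in\delta$, is $0$ otherwise, and distinct lattice points of $\delta$ give distinct characters; hence $\vartheta_{\bar s}|_{O_\delta}$ is a monomial unit times $\sum_{\omega\in\delta\cap I}c_\omega(\bar s)(\text{distinct characters})$, which is not identically zero provided $c_\omega(\bar s)\ne 0$ for some $\omega\in\delta\cap I$. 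Every cell $\delta$ has a vertex $v$, and $v$ is a vertex of $\mathscr{P}_F$, hence of the refinement $\mathscr{P}$ (coarsening only deletes vertices), so $v\in Q(\mathbf{Z})\cap\mathscr{P}$ and $c_v(\bar s)\ne 0$ by hypothesis. Therefore $\vartheta_{\bar s}$ vanishes identically on no torus orbit and on no component, so $\Theta_{\bar s}$ is an effective Cartier divisor in the ample class $\mathcal{L}_{\bar s}$ containing no torus orbit --- the fibrewise form of \cite[Lemma 2.6.1]{Alex02} --- and $(\mathcal{X}_{\varphi,\bar s},\Theta_{\bar s})$ is a stable toric pair. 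Together with the flatness statements and the generic-fibre computation this yields the proposition.

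I expect the main obstacle to be the first step: pinning down $\mathcal{X}_{\varphi,\bar s}$ as the stable toric variety $X_{\mathscr{P}_F}$ with the stated combinatorial structure, which rests on Alexeev's toric-degeneration machinery and on carefully matching faces $F\subseteq P$, the induced coarsenings $\mathscr{P}_F$ of $\mathscr{P}$, and the vanishing pattern of the structure constants. Once this is in place the divisor statement is a direct monomial computation whose sole input is the non-vanishing hypothesis; the only point requiring care is that the coefficients that matter on a given fibre --- those indexed by the vertices of $\mathscr{P}_F$ --- are always among the $c_\omega$ with $\omega\in Q(\mathbf{Z})\cap\mathscr{P}$, which is exactly what makes the hypothesis the right one.
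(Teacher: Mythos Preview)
Your proposal is correct and follows essentially the same route as the paper: the paper's argument (contained in the paragraph immediately preceding the proposition rather than in a separate proof environment) is precisely to pull back along a discrete valuation $v\colon P\to\mathbf{N}$ via Corollary~\ref{GKZ functorial direct}, identify the resulting central fibre as $X_{\mathscr{P}'}$ for the coarsening $\mathscr{P}'$ determined by $v\circ\varphi$, and then invoke \cite[Lemma~2.6.1]{Alex02} for the stability of the divisor. Your version simply unpacks that last citation into the explicit monomial check on each torus orbit, together with the observation that vertices of the coarsened paving lie in $Q(\mathbf{Z})\cap\mathscr{P}$.
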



\subsection{The Standard Family}\label{GKZ standard family}
A more logical way would be we define our toric monoids first, and then identify them with monoids appeared in \cite{ols08}. However, in order to reduce notations, we just use notations from \cite{ols08} and show that they can be naturally obtained from the mirror constructions in Section~\ref{secondary fan} and~\ref{glue GKZ families}. Define $N_i=S(\sigma_i)$, and $N_\mathscr{P}:=\varinjlim N_i$ (in the category of integral monoids). And notice that $\Hom(\varinjlim N_i, \mathbf{Z})\cong \varprojlim(\Hom(N_i,\mathbf{Z}))$, $N_\mathscr{P}^{\gp}$ is the dual of $\varprojlim(\Hom(N_i,\mathbf{Z}))$. Also consider the direct limit in the category of sets $S(\underline{Q}):=\varinjlim N_i$. As a set, $S(\underline{Q})=S(Q)$. $\varprojlim(\Hom(N_i,\mathbf{Z}))$ is the group of integral piecewise linear functions on $C(\underline{Q})$, and is equal to $PA(\mathscr{P},\mathbf{Z})$. Define the natural map $\tilde{\varphi}':S(\underline{Q})\to N_\mathscr{P}$ by the universal property of $S(\underline{Q})$. $\tilde{\varphi}'$ should be regarded as the universal $\mathscr{P}$-piecewise linear function because $N_\mathscr{P}^{\gp}=(PA(\mathscr{P},\mathbf{Z}))^*$. However, we only need convex functions and we want to neglect affine functions. Define $H_\mathscr{P}\subset N_\mathscr{P}^{\gp}$ to be the submonoid generated by 
\[
\alpha*\beta:=\tilde{\varphi}'(\alpha)+\tilde{\varphi}'(\beta)-\tilde{\varphi}'(\alpha+\beta), \forall \alpha,\beta\in S(Q).
\]

The evaluation is a natural map from $H_\mathscr{P}$ to the space $PA(\mathscr{P},\mathbf{Z})^*$. This map is injective because everything is integral. The image is in $(PA(\mathscr{P},\mathbf{Z})/Aff)^*$, because $H_\mathscr{P}\subset \ann(Aff)$. Moreover, $H_\mathscr{P}$ is sharp because $\mathscr{P}$ is regular. 

\begin{proposition}\label{log structure}
Let $C(\mathscr{P},\mathbf{Z})$ be the submonoid of the image of integral convex functions in $PA(\mathscr{P},\mathbf{Z})/Aff$, then 
\[
H_\mathscr{P}^\sat=C(\mathscr{P},\mathbf{Z})^\vee
\]
\end{proposition}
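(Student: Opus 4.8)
The plan is to prove the two inclusions $H_\mathscr{P}^\sat \subseteq C(\mathscr{P},\mathbf{Z})^\vee$ and $C(\mathscr{P},\mathbf{Z})^\vee \subseteq H_\mathscr{P}^\sat$ separately, working inside the finite-dimensional space $(PA(\mathscr{P},\mathbf{Z})/\Aff)_\mathbf{R}^*$, where everything is a rational polyhedral cone and duality is well-behaved. The first inclusion should be essentially formal: a generator $\alpha * \beta = \tilde\varphi'(\alpha) + \tilde\varphi'(\beta) - \tilde\varphi'(\alpha+\beta)$ of $H_\mathscr{P}$, evaluated against an integral convex function $\psi \in C(\mathscr{P},\mathbf{Z})$, gives $\psi(\alpha) + \psi(\beta) - \psi(\alpha+\beta)$ in the cone-version of $\psi$, which is $\geq 0$ precisely by convexity of $\psi$ (this is the standard translation: $\psi$ convex $\iff$ all bending parameters $p_\rho \in \mathbf{N}$ $\iff$ $\widetilde\psi(\alpha) + \widetilde\psi(\beta) \geq \widetilde\psi(\alpha+\beta)$). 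Hence $H_\mathscr{P} \subseteq C(\mathscr{P},\mathbf{Z})^\vee$, and since the latter is saturated (it is the dual of a monoid of lattice points in a rational cone), $H_\mathscr{P}^\sat \subseteq C(\mathscr{P},\mathbf{Z})^\vee$ follows.

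For the reverse inclusion it suffices, after saturation, to show the two cones $\mathbf{R}_{\geq 0} H_\mathscr{P}$ and $C(\mathscr{P},\mathbf{Z})^\vee_\mathbf{R}$ coincide in $(PA(\mathscr{P},\mathbf{Z})/\Aff)_\mathbf{R}^*$; by biduality of closed convex rational polyhedral cones this is equivalent to showing that the dual cone of $\mathbf{R}_{\geq 0}H_\mathscr{P}$ equals $C(\mathscr{P},\mathbf{Z})_\mathbf{R}$, i.e.\ that a class $\bar\psi \in PA(\mathscr{P},\mathbf{Z})/\Aff$ pairs non-negatively with every $\alpha * \beta$ exactly when $\psi$ is convex. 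One direction is the convexity criterion just used; the other direction is the key geometric point: if $\langle \psi, \alpha * \beta\rangle \geq 0$ for all $\alpha, \beta \in S(Q)$, then in particular, taking $\alpha,\beta$ in adjacent maximal cells $\sigma_+, \sigma_-$ sharing a codimension-one wall $\rho$ and chosen close to $\rho$, one reads off that the bending parameter $p_\rho(\psi)$ is $\geq 0$, so $\psi$ is convex. One must check that the pairings against the generators $\alpha*\beta$ see \emph{all} the walls — this is where regularity of $\mathscr{P}$ and the fact that $S(Q) = \coprod_n Q(1/n)$ contains rational points arbitrarily close to every wall gets used, together with the linear-algebra fact that the bending parameter along $\rho$ is detected by $\widetilde\psi$ evaluated at a triple $(\alpha, \beta, \alpha+\beta)$ straddling $\rho$.

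I expect the main obstacle to be the bookkeeping needed to pass cleanly between three different incarnations of the same data: the monoid $H_\mathscr{P} \subset N_\mathscr{P}^{\gp}$, its image under evaluation in $(PA(\mathscr{P},\mathbf{Z})/\Aff)^*$, and the ``bending parameter" coordinates $\psi \mapsto (p_\rho(\psi))_\rho$ indexed by codimension-one cells. Concretely, one wants a clean statement that the $\mathbf{R}$-span of $\{\alpha*\beta\}$ is exactly the cone dual to $\{\psi : p_\rho(\psi) \geq 0 \ \forall \rho\}$, and that the extreme rays of $\mathbf{R}_{\geq 0}H_\mathscr{P}$ correspond to the walls $\rho$ (up to the $\Aff$-quotient). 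Once that dictionary is set up — using Definition~\ref{definition of bending parameter} and the remark that an element of $PA(\mathscr{P})/\Aff$ is determined by its bending parameters — the equality of the two cones is immediate, and saturating both sides gives $H_\mathscr{P}^\sat = C(\mathscr{P},\mathbf{Z})^\vee$. A minor technical point to dispatch along the way is that $C(\mathscr{P},\mathbf{Z})$ (integral convex classes) is exactly the lattice-point monoid of the rational cone cut out by $p_\rho \geq 0$, so that its dual is computed by the rational points of the dual cone, matching $H_\mathscr{P}^\sat$.
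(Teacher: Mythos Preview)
Your outline matches the paper's proof almost exactly through the first two steps: you show $H_\mathscr{P}\subset C(\mathscr{P},\mathbf{Z})^\vee$ by evaluating $\alpha*\beta$ against a convex $\psi$, and you show $(H_\mathscr{P})^\vee=C(\mathscr{P},\mathbf{Z})$ by the converse observation that $\psi(\alpha)+\psi(\beta)\geqslant\psi(\alpha+\beta)$ for all $\alpha,\beta$ forces convexity. The paper does precisely this.

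There is, however, a genuine gap in your passage from ``the real cones coincide'' to ``$H_\mathscr{P}^\sat=C(\mathscr{P},\mathbf{Z})^\vee$''. Biduality over $\mathbf{R}$ gives $\mathbf{R}_{\geqslant0}H_\mathscr{P}=C(\mathscr{P},\mathbf{R})^\vee$; intersecting with the lattice $(PA(\mathscr{P},\mathbf{Z})/Aff)^*$ gives $C(\mathscr{P},\mathbf{Z})^\vee$. But $H_\mathscr{P}^\sat$ is by definition the saturation of $H_\mathscr{P}$ in $H_\mathscr{P}^{\gp}$, not in the ambient lattice. So you still need
\[
H_\mathscr{P}^{\gp}=\ann(Aff)\cap N_\mathscr{P}^{\gp}=(PA(\mathscr{P},\mathbf{Z})/Aff)^*,
\]
i.e.\ that the group generated by the elements $\alpha*\beta$ is already \emph{saturated} in $\ann(Aff)$, not merely of full rank. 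Your ``bookkeeping'' paragraph discusses spanning the right cone and relating extreme rays to walls $\rho$, but never addresses this lattice saturation; equality of the $\mathbf{R}$-spans is not enough. The paper isolates exactly this claim and proves it by invoking \cite{ols08} Lemma~3.1.7 to identify $H_\mathscr{P}$ with the image of $SC_1(\mathbb{X}_{\geqslant0})/B_1$, and then \cite{Alex02} Lemma~2.9.5 to see that this is the semigroup generated by \emph{all} integral linear relations among the points of $\mathbb{X}$, whence $H_\mathscr{P}^{\gp}$ is saturated in $\ann(Aff)$. You need either this argument or a direct one showing every integral element of $\ann(Aff)$ is a $\mathbf{Z}$-combination of the $\alpha*\beta$.
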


\begin{proof}
It is easy to see that $\alpha*\beta\in C(\mathscr{P},\mathbf{Z})^\vee$. It follows that $H_\mathscr{P}\subset C(\mathscr{P},\mathbf{Z})^\vee$. On the other hand, for any $\bar{\psi}\in PA(\mathscr{P},\mathbf{Z})/Aff$. Pick any lift $\psi\in PA(\mathscr{P},\mathbf{Z})$. $\psi$ is a $\mathscr{P}$-piecewise affine function over $\underline{Q}$. $\psi(\alpha*\beta)\geqslant 0$ for all $\alpha,\beta$ if and only if it is convex. Therefore $(H_{\mathscr{P}})^\vee=C(\mathscr{P},\mathbf{Z})$. We claim that $H_\mathscr{P}^{\gp}=\ann(Aff)\cap N_\mathscr{P}^{\gp}$. Assume this, then, since $H_\mathscr{P}$ is sharp, $H_\mathscr{P}^{\sat}=((H_\mathscr{P})^\vee)^\vee=C(\mathscr{P},\mathbf{Z})^\vee$. 

Now we prove the claim. By (\cite{ols08} Lemma 3.1.7), $H_\mathscr{P}$ is equal to the image of $SC_1(\mathbb{X}_{\geqslant 0})/B_1$. By the description of $SC_1(\mathbb{X}_{\geqslant 0})/B_1$ in the proof of (\cite{Alex02} Lemma 2.9.5), it is the semigroup generated by integral points in the kernel that defines all the linear relations in $\mathbb{X}_\mathbf{R}$. Therefore $H_\mathscr{P}^{\gp}$ is saturated in $\ann(Aff)$.  
\end{proof}

Up to a global affine function, the piecewise affine function $\varphi'$ is equal to a piecewise affine function $\varphi: Q\to H_\mathscr{P}^{\gp}$. (Because the bending parameters are all in $H_\mathscr{P}^{\gp}$. We can simply require $\varphi$ to be zero on some top-dimensional cell.) By definition, $\varphi$ is $H_\mathscr{P}$-convex and integral. Let $k=\mathbf{Z}$, and define $S(Q_\varphi)$ and $\mathcal{X}_\mathscr{P}:=\mathcal{X}_\varphi$ as before. This is called the standard family. 

Define $S(Q)\rtimes H_\mathscr{P}^\sat$ to be the monoid with underlying set $S(Q)\times H_\mathscr{P}^\sat$ and the addition given by
\[
(\alpha,p)+(\beta,q):=(\alpha+\beta,p+q+\alpha*\beta).
\]

From the above discussion,  $R_\varphi\cong k[S(Q)\rtimes H_\mathscr{P}^\sat]$ and 
\[
\mathcal{X}_\mathscr{P}\cong\proj k[S(Q)\rtimes H_\mathscr{P}^\sat]\to \spec k[H_\mathscr{P}^\sat].
\]

Therefore, our standard family is the saturation of the standard family in \cite{ols08}. We can use the pull--back log structures from the standard family in \cite{ols08}, and denote them by $(\mathcal{X}_\mathscr{P}, M_\mathscr{P})$ over $(S,M_S)=\spec (H_\mathscr{P}\to k[H_\mathscr{P}^\sat])$. By (loc. cit. 3.1.12), the log morphism $(\mathcal{X}_\mathscr{P}, M_\mathscr{P})\to (S, M_S)$ is integral and log smooth.

\begin{proposition}\label{bending parameters for GKZ}[bending parameters for $\varphi$]
For any codimension-$1$ wall $\rho$, the bending parameter $p_\rho\in H_\mathscr{P}^\sat$ is described as follows. Let $\rho$ be the common face of maximal cells $\sigma_i$ and $\sigma_j$. For any $\mathscr{P}$-piecewise affine function $\psi\in PA(\mathscr{P},\mathbf{R})$, $\psi_i$ and $\psi_j$ are affine extensions of $\psi|_{\sigma_i}$ and $\psi|_{\sigma_j}$. Let $L(\rho)$ be the linear subspace in $\mathbb{X}_\mathbf{R}$ generated by $\rho$. Let $\omega\in \overline{X}$ be a point that maps to a minimal generator of $\mathbb{X}/L(\rho)\cap\mathbb{X}\cong \mathbf{Z}$ and is on the same side as $\sigma_i$, then the bending parameter is defined by
\begin{equation}\label{bending parameters for the standard family in GKZ}
p_\rho(\psi)=\psi_i(\omega)-\psi_j(\omega). 
\end{equation}
\end{proposition}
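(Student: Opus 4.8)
The plan is to unwind the universal property defining $\tilde{\varphi}'\colon S(\underline{Q})\to N_\mathscr{P}$ on a single maximal chart, read off the difference $\tilde{\varphi}'|_{C(\sigma_i)}-\tilde{\varphi}'|_{C(\sigma_j)}$ as an explicit pairing, and then evaluate at a convenient lattice point. Since both $\psi\mapsto p_\rho(\psi)$ and $\psi\mapsto\psi_i(\omega)-\psi_j(\omega)$ are $\mathbf{R}$-linear in $\psi$, it suffices to check the asserted identity for $\psi\in PA(\mathscr{P},\mathbf{Z})$. First I would note that $\varphi$ and $\varphi'$ differ by a global affine function, and adding a global affine function changes no bending parameter; hence the bending parameter of $\varphi$ across $\rho$ equals that of $\varphi'$ (equivalently of $\tilde{\varphi}'$), and it is enough to compute the latter.

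The key local description is this: since $\sigma_i$ is a maximal cell, $S(\sigma_i)^{\gp}=\mathbb{X}$, so $\tilde{\varphi}'|_{C(\sigma_i)}$ groupifies to a homomorphism $\mathbb{X}\to N_\mathscr{P}^{\gp}$, and under the identification $N_\mathscr{P}^{\gp}=PA(\mathscr{P},\mathbf{Z})^*$ this homomorphism pairs against $\psi$ by $x\mapsto\psi_i(x)$, where $\psi_i$ is the linear extension of $\psi|_{C(\sigma_i)}$ to $\mathbb{X}_\mathbf{R}$ --- equivalently, via the height-one embedding $\ev\colon\overline{X}\hookrightarrow\mathbb{X}$, the affine extension of $\psi|_{\sigma_i}$ to $\overline{X}_\mathbf{R}$. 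This is immediate from the definition of $\tilde{\varphi}'$ through the direct limit: on $S(\sigma_i)$ the structure map is ``evaluate $\psi$'', i.e. $\langle\tilde{\varphi}'(q),\psi\rangle=\psi(q)$ with $\psi$ regarded as a piecewise-$\mathbf{Z}$-linear function on $C(\underline{Q})$, and on $C(\sigma_i)$ this $\psi$ coincides with its linear piece $\psi_i$; both sides are linear in $x$ and $N_i$ generates $\mathbb{X}$ as a group, so the formula persists on all of $\mathbb{X}$.

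Consequently $\tilde{\varphi}'|_{C(\sigma_i)}-\tilde{\varphi}'|_{C(\sigma_j)}$ corresponds to $x\mapsto[\psi\mapsto\psi_i(x)-\psi_j(x)]$, which vanishes on $L(\rho)\cap\mathbb{X}$ because $\psi_i$ and $\psi_j$ agree on $C(\rho)$ hence on its span $L(\rho)$. Thus it factors through $\mathbb{X}/(L(\rho)\cap\mathbb{X})\cong\mathbf{Z}$ and equals $n_\rho\otimes p_\rho$ with $n_\rho$ the primitive functional positive on $\sigma_i$ (so $\sigma_i$ plays the role of $\sigma_+$ in Definition~\ref{definition of bending parameter}), giving $p_\rho(\psi)=\psi_i(x)-\psi_j(x)$ for every $x\in\mathbb{X}$ with $n_\rho(x)=1$. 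It remains to see that such an $x$ may be taken of the form $\ev(\omega)$, $\omega\in\overline{X}$: the vertices of the codimension-one lattice cell $\rho$ lie in $\overline{X}$, so they have degree one in $\mathbb{X}$ and span $L(\rho)$; hence $\deg$ restricts to a surjection $L(\rho)\cap\mathbb{X}\twoheadrightarrow\mathbf{Z}$, and starting from any $x_0$ with $n_\rho(x_0)=1$ we may subtract an element of $L(\rho)\cap\mathbb{X}$ (on which $n_\rho$ vanishes) to also achieve $\deg(x)=1$, i.e. $x=\ev(\omega)$ for a unique $\omega\in\overline{X}$ since $\deg^{-1}(1)=\ev(\overline{X})$. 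The hypothesis that $\omega$ lie on the same side as $\sigma_i$ selects $n_\rho(\ev(\omega))=+1$ rather than $-1$, ``minimal generator'' records that $n_\rho$ is primitive, and any two admissible choices of $\omega$ differ by a degree-zero element of $L(\rho)\cap\mathbb{X}$, on which $\psi_i-\psi_j$ vanishes; translating linear functions on $\mathbb{X}$ back to affine functions on $\overline{X}$ then yields \eqref{bending parameters for the standard family in GKZ}.

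I expect the main obstacle to be the middle step: correctly tracking the identification $N_\mathscr{P}^{\gp}\cong PA(\mathscr{P},\mathbf{Z})^*$, verifying that on a maximal chart the universal piecewise-additive map $\tilde{\varphi}'$ groupifies to the ``evaluate the linear piece'' pairing, and keeping straight the constant shift relating linear functions on $\mathbb{X}$ to affine functions on $\overline{X}$. The existence-of-$\omega$ argument is a short lattice computation, and the sign and normalization conventions should be sanity-checked once more against Definition~\ref{definition of bending parameter}.
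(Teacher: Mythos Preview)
Your proposal is correct and arrives at the same key identity $(\tilde{\varphi}_i-\tilde{\varphi}_j)(\psi)=\psi_i-\psi_j$ as the paper, but reaches it by a slightly different route. You compute $\langle\tilde{\varphi}'|_{N_i},\psi\rangle$ directly on a single maximal chart via the direct-limit description of $N_\mathscr{P}$, obtaining $\psi_i$ on the nose, and then groupify; the paper instead works with $\varphi$ (already pushed to $H_\mathscr{P}^{\gp}$) and observes that $\psi\circ\tilde{\varphi}-\psi$ is globally additive on $S(Q)$, so the same global linear correction appears on every maximal cell and cancels in the difference. Your chart-by-chart computation is a bit more explicit and handles the existence of a degree-one lattice point $\omega$ with $n_\rho(\omega)=1$ (which the paper simply assumes in the statement); the paper's additivity trick is shorter and sidesteps any discussion of the passage from $\varphi'$ to $\varphi$. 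Either way the final evaluation at $\omega$ is identical, so there is no substantive gap.
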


\begin{proof}
 $\forall \psi\in PA(\mathscr{P},\mathbf{R})$, we have 
\begin{align*}
\tilde{\varphi}(\alpha)(\psi)+\tilde{\varphi}(\beta)(\psi)-\tilde{\varphi}(\alpha+\beta)(\psi)=\psi(\alpha)+\psi(\beta)-\psi(\alpha+\beta),\\
(\psi\circ\tilde{\varphi}-\psi)(\alpha+\beta)=(\psi\circ\tilde{\varphi}-\psi)(\alpha)+(\psi\circ\tilde{\varphi}-\psi)(\beta)\quad \forall \alpha,\beta.
\end{align*}

It means $\psi\circ\tilde{\varphi}-\psi$ is a global linear function on $S(Q)$. It follows that
\[
(\tilde{\varphi}_i-\tilde{\varphi}_j)(\psi)=\psi_i-\psi_j.
\]

Therefore, by definition,
\[
p_\rho(\psi)=\psi_i(\omega)-\psi_j(\omega). 
\]
\end{proof}
 

\section{The Secondary Fan is the Mori fan}\label{secondary fan}
\subsection{The Secondary Fan}\label{GKZ secondary fan}
In this section, we study the secondary fan defined in \cite{GKZ}. Fix $Q\subset \overline{X}_\mathbf{R}$ a lattice polytope of full dimension $g$. Denote $Q(\mathbf{Z})$ by $I$, and assume the cardinality $|I|$ is $N$.



For any function $\psi$ on $I$ and  a triangulation $\mathscr{T}$, we define a $\mathscr{T}$-piecewise affine function $g_{\psi,\mathscr{T}}:Q\to \mathbf{R}$. For each vertex $\omega$ of $\mathscr{T}$, we have $g_{\psi,\mathscr{T}}(\omega)=\psi(\omega)$.  The function $g_{\psi,\mathscr{T}}$ is obtained by affinely interpolating $\psi$ inside each simplex of $\mathscr{T}$. 

We regard each function $\psi$ as an element in $\mathbf{R}^I$. 

\begin{definition}\label{definition of the cone of tiling}
Let $\mathscr{T}$ be a triangulation of $Q$. We shall denote by $\widetilde{C}(\mathscr{T})$ the cone in $\mathbf{R}^I$ consisting of functions $\psi:I\to \mathbf{R}$ with the following two properties: 
\begin{itemize}
\item[a)] The function $g_{\psi, \mathscr{T}}:Q\to \mathbf{R}$ is convex\footnote{A function $f$ is convex if $f(tx+(1-t)y)\leqslant tf(x)+(1-t)f(y)$. Notice that it is different from the usual convention in the literature of toric geometry.}.
\item[b)] For any $\omega\in I$ but not a vertex of any simplex from $\mathscr{T}$, we have $g_{\psi, \mathscr{T}}\leqslant \psi(\omega)$. 
\end{itemize}
\end{definition}

If $\mathscr{T}$ is a coherent triangulation, then the map $\mathbf{R}^I\to PA(\mathscr{T},\mathbf{R})$ which sends $\psi$ to $g_{\psi,\mathscr{T}}$ is a surjection, and the evaluation map $PA(\mathscr{T},\mathbf{R})\to \mathbf{R}^I$ is injective. Thus $\mathbf{R}^I=PA(\mathscr{T},\mathbf{R})\times \mathbf{R}^{I_\emptyset}$, where $I_\emptyset=I\backslash \mathscr{T}$. The projection $\pr: \mathbf{R}^I\to \mathbf{R}^{I_\emptyset}$ is described as follows: For any $\omega\in I_\emptyset$, $\omega$ is in the interior of some simplex $\sigma\in \mathscr{T}$, and assume $\omega=\sum_i a_i\omega_i$, for $\{\omega_i\}$ the set of vertices of $\sigma$. The $\omega$-coordinate of $\pr(\psi)$ is $\psi(\omega)-\sum_i a_i\psi(\omega_i)$. Denote the convex piecewise affine $\mathbf{R}$-valued functions by $CPA(\mathscr{T},\mathbf{R})$, $\widetilde{C}(\mathscr{T})=CPA(\mathscr{T},\mathbf{R})\times \mathbf{R}_{\geqslant 0}^{I_\emptyset}$.  Since $N_{\mathscr{T},\mathbf{R}}^{\gp}=(PA(\mathscr{T},\mathbf{R}))^*$, we also have $(\mathbf{R}^I)^*=N_{\mathscr{T},\mathbf{R}}^{\gp}\times (\mathbf{R}^{I_\emptyset})^*$. We need the following proposition.

\begin{proposition}[\cite{GKZ} Chapter. 7, Proposition. 1.5]\label{complete fan}
Fix $Q$ and $I$. The cones $\widetilde{C}(\mathscr{T})$ for all the coherent triangulations of $Q$ together with all faces of these cones form a complete generalized fan in $\mathbf{R}^I$. 
\end{proposition}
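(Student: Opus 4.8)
The plan is to exhibit the cones $\widetilde{C}(\mathscr{T})$ as the domains of linearity of a single globally defined concave piecewise-linear function on $\mathbf{R}^I$, for which the fan property is then a standard fact. For $\psi\in\mathbf{R}^I$ let $h_\psi\colon Q\to\mathbf{R}$ be the \emph{lower envelope} of $\psi$, i.e.\ the largest convex function on $Q$ with $h_\psi(\omega)\le\psi(\omega)$ for all $\omega\in I$; explicitly
\[
h_\psi(x)=\min\Big\{\textstyle\sum_{\omega\in I}\lambda_\omega\psi(\omega):\ \sum_\omega\lambda_\omega\,\omega=x,\ \lambda\ge 0,\ \sum_\omega\lambda_\omega=1\Big\},
\]
the projection to $Q$ of the lower boundary of $\conv\{(\omega,\psi(\omega)):\omega\in I\}\subset\overline{X}_\mathbf{R}\times\mathbf{R}$. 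Thus $h_\psi$ is convex and piecewise affine with finitely many affine domains, which together with their faces form a coherent subdivision $\mathscr{S}(\psi)$ of $Q$ into lattice polytopes with vertices in $I$. Two elementary observations do all the work: (a) for \emph{every} triangulation $\mathscr{T}$ of $Q$ with vertices in $I$ one has $h_\psi\le g_{\psi,\mathscr{T}}$ on $Q$, because on each simplex $g_{\psi,\mathscr{T}}$ is the affine interpolation of $\psi$ at the vertices while $h_\psi$ is convex with $h_\psi\le\psi$ there; (b) there is a \emph{coherent} triangulation $\mathscr{T}_\psi$ with $g_{\psi,\mathscr{T}_\psi}=h_\psi$ — namely any regular triangulation of $Q$ using only the points $\mathrm{vert}(\mathscr{S}(\psi))$ and refining $\mathscr{S}(\psi)$ (a pulling triangulation; it exists and is coherent because $\mathscr{S}(\psi)$ is), on each of whose simplices $\tau\subseteq C\in\mathscr{S}(\psi)$ both functions equal the affine interpolation of $\psi$ at $\mathrm{vert}(\tau)\subseteq\mathrm{vert}(C)$.

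Conditions a), b) of Definition~\ref{definition of the cone of tiling} say precisely that $g_{\psi,\mathscr{T}}$ is convex and $\le\psi$ on all of $I$, i.e.\ $g_{\psi,\mathscr{T}}\le h_\psi$; combined with (a) this gives $\psi\in\widetilde{C}(\mathscr{T})\iff g_{\psi,\mathscr{T}}=h_\psi$. Set $\Phi(\psi):=(g+1)\int_Q h_\psi\ud\mathrm{vol}$. Integrating an affine function over a $g$-simplex as $\tfrac{\mathrm{vol}(\sigma)}{g+1}$ times the sum of its vertex values gives, for $\psi\in\widetilde{C}(\mathscr{T})$,
\[
\Phi(\psi)=(g+1)\int_Q g_{\psi,\mathscr{T}}\ud\mathrm{vol}=\langle\phi(\mathscr{T}),\psi\rangle,\qquad \phi(\mathscr{T})_\omega:=\!\!\sum_{\sigma\in\mathscr{T}_{\max},\,\sigma\ni\omega}\!\!\mathrm{vol}(\sigma)\in\mathbf{Q}_{\ge 0},
\]
and by (a), (b) the minimum over coherent triangulations of $\int_Q g_{\psi,\mathscr{T}}\ud\mathrm{vol}$ equals $\int_Q h_\psi\ud\mathrm{vol}$. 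Hence $\Phi=\min_{\mathscr{T}}\langle\phi(\mathscr{T}),\,\cdot\,\rangle$ is a minimum of finitely many rational linear functionals, and (using $h_\psi\le g_{\psi,\mathscr{T}}$ and continuity to pass from equal integrals to equal functions)
\[
\widetilde{C}(\mathscr{T})=\{\psi:\langle\phi(\mathscr{T}),\psi\rangle=\Phi(\psi)\}=\big\{\psi:\langle\phi(\mathscr{T})-\phi(\mathscr{T}'),\psi\rangle\le 0\ \text{for all coherent }\mathscr{T}'\big\}.
\]

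It remains to invoke the standard fact that the linearity domains of a minimum of finitely many linear functionals, together with all their faces, form a complete generalized fan. Concretely: each $\widetilde{C}(\mathscr{T})$ is a rational polyhedral cone, being cut out by the finitely many displayed inequalities; the cones cover $\mathbf{R}^I$ since every $\psi$ lies in $\widetilde{C}(\mathscr{T}_\psi)$ by (b); and for two coherent triangulations $\mathscr{T}_1,\mathscr{T}_2$,
\[
\widetilde{C}(\mathscr{T}_1)\cap\widetilde{C}(\mathscr{T}_2)=\big\{\psi\in\widetilde{C}(\mathscr{T}_1):\langle\phi(\mathscr{T}_1)-\phi(\mathscr{T}_2),\psi\rangle=0\big\}
\]
is a face of $\widetilde{C}(\mathscr{T}_1)$, the defining functional being $\le 0$ on it, and symmetrically a face of $\widetilde{C}(\mathscr{T}_2)$; the general "intersection is a common face" statement then follows by routine face-lattice bookkeeping. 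The fan is only \emph{generalized} because every $\widetilde{C}(\mathscr{T})$ contains the lineality subspace $\mathrm{Aff}(\overline{X},\mathbf{R})|_I$, on which $\Phi$ is globally linear. I expect the only genuine work to be in the first paragraph: checking that $h_\psi$ is piecewise affine with affine domains forming a coherent subdivision, and the coherence of the pulling refinement in (b); everything downstream is formal.
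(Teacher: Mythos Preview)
The paper does not supply a proof of this proposition; it is quoted directly from \cite{GKZ}, Chapter~7, Proposition~1.5, and used as input. Your argument is essentially the one given there: the vectors $\phi(\mathscr{T})\in\mathbf{R}^I$ you write down are precisely the GKZ characteristic functions, i.e.\ the vertices of the secondary polytope, and your $\Phi$ is $(g+1)$ times its support function, so the cones $\widetilde{C}(\mathscr{T})$ are the maximal cones of its normal fan. The two key inputs you isolate --- that $h_\psi\le g_{\psi,\mathscr{T}}$ always, with equality exactly on $\widetilde{C}(\mathscr{T})$, and that a coherent (pulling) triangulation refining $\mathscr{S}(\psi)$ realizes the lower envelope --- are the same ones GKZ uses.

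One small point of notation: in your formula $\phi(\mathscr{T})_\omega=\sum_{\sigma\in\mathscr{T}_{\max},\,\sigma\ni\omega}\mathrm{vol}(\sigma)$, the condition ``$\sigma\ni\omega$'' must be read as ``$\omega$ is a \emph{vertex} of $\sigma$'', not merely $\omega\in\sigma$; otherwise the integration identity $(g+1)\int_Q g_{\psi,\mathscr{T}}=\langle\phi(\mathscr{T}),\psi\rangle$ fails for $\omega\in I\setminus\mathscr{T}$, and correspondingly $\phi(\mathscr{T})_\omega$ would not vanish there as it must. With that reading your argument is correct.
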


For each $\omega\in I$, let $e_\omega$ denote the evaluation of a function at $\omega$. $\{e_\omega\}$ is a canonical basis for $(\mathbf{Z}^I)^*$. Consider the linear map $p:(\mathbf{Z}^I)^*\to \mathbb{X}$ defined by $e_\omega\mapsto (\omega,1)$. Denote $\ker p$ by $\mathbb{L}$. $\mathbb{L}=Aff^{\perp}$. The dual $\mathbb{L}^*$ is defined to be the quotient of $\mathbf{Z}^I$ by $Aff(\overline{X},\mathbf{Z})$. It is not necessarily torsion free. Let the free part be $\mathbb{L}^*_f$. Define $q:\mathbf{Z}^I\to \mathbb{L}^*_f$. The kernel of $q$ is the saturation of $Aff(\overline{X},\mathbf{Z})$ in $\mathbf{Z}^I$.

\begin{definition}[the secondary fan]\label{definition of the secondary fan}
The image of $\widetilde{C}(\mathscr{T})$ with all their faces form a complete fan $\Sigma(Q)$ in $\mathbb{L}^*_\mathbf{R}=\mathbf{R}^I/Aff$, and is called the secondary fan. The image of $\widetilde{C}(\mathscr{T})$ in $\mathbb{L}^*_\mathbf{R}$ is denoted by $C(\mathscr{T})$. 
\end{definition} 

\begin{remark}
This definition is different from that in \cite{GKZ}. We take the negative of GKZ's fan and then take the quotient. 
\end{remark}

\begin{lemma}
We have
\[
C(\mathscr{T})=C(\mathscr{T},\mathbf{R})\times \mathbf{R}^{I_\emptyset}_{\geqslant 0}, \quad C(\mathscr{T})^\vee=H_{\mathscr{T},\mathbf{R}}^{\sat}\times \mathbf{R}^{I_\emptyset}_{\geqslant 0}. 
\]
\end{lemma}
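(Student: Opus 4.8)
## Proof Proposal

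The plan is to derive the lemma by unwinding the product decomposition $\mathbf{R}^I = PA(\mathscr{T},\mathbf{R})\times \mathbf{R}^{I_\emptyset}$ that was established just above, passing to the quotient by $Aff$, and then dualizing. First I would recall that $\widetilde{C}(\mathscr{T}) = CPA(\mathscr{T},\mathbf{R})\times \mathbf{R}_{\geqslant 0}^{I_\emptyset}$ inside this product, where $CPA(\mathscr{T},\mathbf{R})$ denotes the convex piecewise affine functions. The secondary cone $C(\mathscr{T})$ is by Definition~\ref{definition of the secondary fan} the image of $\widetilde{C}(\mathscr{T})$ under the quotient map $\mathbf{R}^I \to \mathbf{R}^I/Aff = \mathbb{L}^*_\mathbf{R}$. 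Since $Aff \subset PA(\mathscr{T},\mathbf{R})$ and the splitting $\mathbf{R}^I = PA(\mathscr{T},\mathbf{R})\times \mathbf{R}^{I_\emptyset}$ is compatible with the projections, quotienting by $Aff$ only affects the first factor: we get $\mathbb{L}^*_\mathbf{R} = \big(PA(\mathscr{T},\mathbf{R})/Aff\big)\times \mathbf{R}^{I_\emptyset}$, and the image of $\widetilde{C}(\mathscr{T})$ is $\big(CPA(\mathscr{T},\mathbf{R})/Aff\big)\times\mathbf{R}_{\geqslant 0}^{I_\emptyset}$. By the notation introduced before Proposition~\ref{log structure}, the image of the convex cone in $PA(\mathscr{T},\mathbf{R})/Aff$ is exactly $C(\mathscr{T},\mathbf{R})$ (the real version of $C(\mathscr{P},\mathbf{Z})$), which gives the first equality $C(\mathscr{T}) = C(\mathscr{T},\mathbf{R})\times\mathbf{R}^{I_\emptyset}_{\geqslant 0}$.

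For the second equality I would simply dualize. The dual of a product of cones is the product of the duals, so $C(\mathscr{T})^\vee = C(\mathscr{T},\mathbf{R})^\vee \times (\mathbf{R}^{I_\emptyset}_{\geqslant 0})^\vee$. The second factor is self-dual: $(\mathbf{R}^{I_\emptyset}_{\geqslant 0})^\vee = \mathbf{R}^{I_\emptyset}_{\geqslant 0}$. For the first factor, Proposition~\ref{log structure} gives $H_\mathscr{P}^\sat = C(\mathscr{P},\mathbf{Z})^\vee$, equivalently $C(\mathscr{P},\mathbf{Z}) = (H_\mathscr{P}^\sat)^\vee$; taking the real points and dualizing back yields $C(\mathscr{T},\mathbf{R})^\vee = H_{\mathscr{T},\mathbf{R}}^\sat$. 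Here I am using that $C(\mathscr{T},\mathbf{R})$ is a full-dimensional rational polyhedral cone in $(PA(\mathscr{T},\mathbf{R})/Aff)^* {}^* $ — that is, biduality holds — and that the real dual commutes with the $\mathbf{Z}$-dual after tensoring, which is standard for rational polyhedral cones. Combining the two factors gives $C(\mathscr{T})^\vee = H_{\mathscr{T},\mathbf{R}}^\sat\times\mathbf{R}^{I_\emptyset}_{\geqslant 0}$.

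The main obstacle, and the step deserving the most care, is making precise the identification of ambient lattices so that the product decompositions on the primal and dual sides are genuinely compatible. Specifically, one must check that under $\mathbb{L}^*_\mathbf{R} = (PA(\mathscr{T},\mathbf{R})/Aff)\times\mathbf{R}^{I_\emptyset}$ the dual space decomposes as $\mathbb{L}_\mathbf{R} = N_{\mathscr{T},\mathbf{R}}^{\gp}\times(\mathbf{R}^{I_\emptyset})^*$ — which is precisely the dual of the displayed splitting recalled in the text, since $N_{\mathscr{T},\mathbf{R}}^{\gp} = (PA(\mathscr{T},\mathbf{R}))^*$ and passing to the quotient by $Aff$ on one side corresponds to restricting to $Aff^\perp = \mathbb{L}$ on the other. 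The $\pr$-description of the projection $\mathbf{R}^I\to\mathbf{R}^{I_\emptyset}$ given in the text, sending $\psi$ to $(\psi(\omega)-\sum_i a_i\psi(\omega_i))_{\omega\in I_\emptyset}$, is exactly what makes condition (b) in Definition~\ref{definition of the cone of tiling} translate into the $\mathbf{R}_{\geqslant 0}^{I_\emptyset}$ factor, so one should invoke that explicitly. Once the lattices and splittings are aligned, the rest is the two formal facts that quotients/images of cones split off along a product factor and that duality of cones is multiplicative over products, together with the biduality $C(\mathscr{T},\mathbf{R}) = (H_{\mathscr{T},\mathbf{R}}^\sat)^\vee$ which is just the content of Proposition~\ref{log structure} read in the opposite direction.
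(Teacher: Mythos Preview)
Your proposal is correct and is precisely the argument the paper has in mind: the lemma is stated in the paper without proof, because it follows immediately from the product decomposition $\widetilde{C}(\mathscr{T})=CPA(\mathscr{T},\mathbf{R})\times \mathbf{R}_{\geqslant 0}^{I_\emptyset}$ and the dual decomposition $(\mathbf{R}^I)^*=N_{\mathscr{T},\mathbf{R}}^{\gp}\times (\mathbf{R}^{I_\emptyset})^*$ established just before, together with Proposition~\ref{log structure}. Your write-up supplies exactly those steps, including the care about aligning the lattices and the use of the explicit projection $\pr$ to identify condition~(b) with the $\mathbf{R}^{I_\emptyset}_{\geqslant 0}$ factor.
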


\begin{remark}
The secondary fan $\Sigma(Q)$ is the normal fan of the secondary polytope $P(Q)$, which is a full-dimensional polytope in some affine space $\overline{\mathbb{L}}_\mathbf{R}$ over $\mathbb{L}_\mathbf{R}$. Therefore the dimension of $P(Q)$ is $N-g-1$. 
\end{remark}


\subsection{The Mirror Interpretation of the Secondary Fan}\label{GKZ mirror}
Let $Y$ be the dual lattice of $X$. Given the lattice polytope $Q\subset\overline{X}_\mathbf{R}$, consider the normal fan $\Delta_Q$. For any face $F\subset Q$, following \cite{fulton}, define the normal cone $\sigma_{F}Q=\{v\in Y_{\mathbf{R}}:\langle u,v\rangle \leqslant \langle u',v\rangle,\:  \forall u\in F, u'\in Q\}$. The collection of cones $\sigma_FQ$, as $Q$ varies over the faces of $Q$, form a complete fan which also defines the toric variety $X_Q=X_{\Delta_Q}$. Let $A$ denote the set of all primitive vectors $v$ of the rays $\Delta_Q(1)$ in the fan $\Delta_Q$, define the lattice polytope $P=\conv\{v\}_{v\in A}$ in $Y_\mathbf{R}$. 

Let $T_X$ be the algebraic torus with the character group $Y$. Regard $P$ as the Newton polytope of the Laurent polynomial
\[
W(z)=-1+\sum_{v\in A}z^v.
\]

For a toric variety $X_{\Delta_Q}$, the toric boundary divisor $D$ is anticanonical. The mirror of the toric pair $(X_{\Delta_Q}, D)$ is defined to be the Landau--Ginzburg model $(T_X, W)$ \cite{Ab06},\cite{Ab09}. The actual form of the potential function $W$ is irrelevant. What really matters is $(T_X, W^{-1}(0))$ as a symplectic pair. If the toric variety $X_{\Delta_Q}$ is smooth, there exists a full sub-pre-category of $\Fuk(T_X, W^{-1}(0))$ which is quasi-equivalent as an $A_\infty$ pre-category to the category of line bundles on $X_{\Delta_Q}$ (\cite{Ab09} Theorem 1.2). 

The precise correspondence of the pre-categories depends on a choice of a degeneration. The polytope $Q$ gives rise to a $1$-dimensional degeneration family of the Landau--Ginzburg models $(\check{\pi}: \overline{\mathcal{Y}}\to \mathbf{A}^1, W_t)$ as follows: The vertices of the polytope $Q$ defines a piecewise linear, strictly convex function $\check{\psi}$ over the fan $\Delta_Q$. Define
\[
W_t(z)=-1+\sum_{v\in A}z^vt^{-\check{\psi}(v)}.
\]

The strict convex piecewise linear function $\check{\psi}$ defines an integral paving $\check{\mathscr{Q}}$ of $P$. Take the discrete Legendre transform $(\overline{X}_\mathbf{R}, \mathscr{Q}, \psi)$ of the triple $(P,\check{\mathscr{Q}},\check{\psi})$. Let $\Pi$ be the non-smooth locus of $\psi$. As $t$ goes to zero, after some re-parametrization, the log amoeba of $W_t^{-1}(0)$ converges to the tropical hypersurface $\Pi$ of $\overline{X}_\mathbf{R}$ (\cite{Mik04} Theorem 5). Notice that by definition of $\check{\psi}$, up to a negative sign, the Legendre transform of the origin inside $P$ is the lattice polytope $Q$. Therefore, if we want to get $\Pi$ as a tropical divisor in the limit, we need to construct a degeneration with $Q$ as the dual intersection complex. Let $\overline{\Sigma}$ be the fan consisting of the faces of the rational polyhedral cone $C(Q)\subset \mathbb{X}_\mathbf{R}$. Let $\overline{\mathcal{Y}}$ denote the affine toric variety $X_{\overline{\Sigma}}$. The morphism $\check{\pi}: \overline{\mathcal{Y}}\to \mathbf{A}^1$ has the generic fiber $T_X$. The family of Landau--Ginzburg models $(\overline{\mathcal{Y}}_t, W_t)$ is the mirror of the triple $(X_Q, D, \mathcal{L})$. 

There is another interpretation of the mirror family $\overline{\mathcal{Y}}\to \mathbf{A}^1$. For each cone $\sigma\in \Delta_Q$, consider the cone of $1$-higher dimension
\[
\tilde{\sigma}=\{(u, t); u\in \sigma, t+\check{\psi}(u)\geqslant 0\}\subset \mathbb{Y}_\mathbf{R}.
\]

Let $\widetilde{\Delta}_Q$ be the fan in $\mathbb{Y}_\mathbf{R}$ consisting of the cones $\tilde{\sigma}$ for $\sigma\in \Delta_Q$ and their faces. This is the fan for the total space of $\mathcal{L}$ (\cite{CLS} Proposition 7.3.1). The primitive vectors in the rays $\widetilde{\Delta}_Q(1)$ are $(v,-\check{\varphi}(v))$. Therefor, the mirror of the total space $\mathcal{L}$ is the $g+1$-dimensional torus $T_\mathbb{X}$ with the superpotential $W(t,z):=W_t(z)$. $\overline{\mathcal{Y}}$ is the partial compactification of $T_{\mathbb{X}}$ with the limit of $W$.

Now we study the resolutions of $\overline{\mathcal{Y}}$ (mirror to the degenerations of the toric pairs $(X_Q, \Theta)$). Let $\mathscr{P}$ be a paving of $Q$. For any cell $\sigma\in \mathscr{P}$, the cone $C(\sigma)$ is a strongly convex rational polyhedral cone in $\mathbb{X}_\mathbf{R}$. The cones $\{C(\sigma)\}$ form a fan denoted by $\Sigma_\mathscr{P}$. Let $\mathcal{Y}_\mathscr{P}$ denote the toric variety $X_{\Sigma_{\mathscr{P}}}$. The natural morphism $f: \mathcal{Y}_\mathscr{P}\to \overline{\mathcal{Y}}$ is birational and proper. By (\cite{EGA3} Theorem 3.2.1), $f_*\mathcal{O}_{\mathcal{Y}_\mathscr{P}}$ is a coherent $\mathcal{O}_{\overline{\mathcal{Y}}}$-module.  Since $\overline{\mathcal{Y}}$ is normal and $f$ is birational, $R_0:=\Gamma(\overline{\mathcal{Y}},\mathcal{O}_{\overline{\mathcal{Y}}})\cong\Gamma(\mathcal{Y}_\mathscr{P}, \mathcal{O}_{\mathcal{Y}_\mathscr{P}})$. If $\mathscr{P}$ is coherent, $\Sigma_\mathscr{P}$ has a strictly convex support function. By (\cite{CLS} Theorem 7.2.4), $f:\mathcal{Y}_\mathscr{P}\to \overline{\mathcal{Y}}$ is projective. Denote the set of rays in $\Sigma_\mathscr{P}$ by $\Sigma_\mathscr{P}(1)$. There is a canonical bijection $\Sigma_\mathscr{P}(1)\to I\cap\mathscr{P}$. 

A projective morphism $f:\mathcal{Y}\to \overline{\mathcal{Y}}$ is a relative minimal model if $K_\mathcal{Y}+Y_0$ is $f$-nef, $\mathcal{Y}$ has terminal singularities and is $\mathbf{Q}$-factorial. In our case, since $\mathcal{Y}_\mathscr{P}$ is a toric variety, $K+Y_{\mathscr{P},0}$ is trivial. By (\cite{CLS} Exercise 8.2.14 (a)), $\mathcal{Y}_\mathscr{P}$ is Gorenstein. By (\cite{CLS} Proposition 11.4.12 (b)), $\mathcal{Y}_\mathscr{P}$ has canonical singularities, and if $I=Q(\mathbf{Z})\subset \mathscr{P}$, $\mathcal{Y}_\mathscr{P}$ has terminal singularities. Moreover, $\mathcal{Y}_\mathscr{P}$ is an orbifold if and only if it is $\mathbf{Q}$-factorial, if and only if $\mathscr{P}$ is a triangulation (\cite{CLS} Proposition 4.2.7 \& Theorem 11.4.8). Therefore, we have 
\begin{lemma}
A toric model $\mathcal{Y}_{\mathscr{P}}\to \overline{\mathcal{Y}}$ is a relative minimal model if $\mathscr{P}$ is a coherent triangulation, and $\Sigma_\mathscr{P}(1)=I$. 
\end{lemma}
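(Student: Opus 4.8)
The plan is to verify directly the three defining conditions of a relative minimal model for $f\colon\mathcal{Y}_{\mathscr{P}}\to\overline{\mathcal{Y}}$ under the hypotheses that $\mathscr{P}$ is a coherent triangulation with $\Sigma_{\mathscr{P}}(1)=I$, collecting the toric facts already recalled in the paragraph preceding the statement. The three conditions are: (i) $K_{\mathcal{Y}_{\mathscr{P}}}+Y_{\mathscr{P},0}$ is $f$-nef; (ii) $\mathcal{Y}_{\mathscr{P}}$ has terminal singularities; (iii) $\mathcal{Y}_{\mathscr{P}}$ is $\mathbf{Q}$-factorial. I would also note at the outset that $f$ is projective, which was already established: $\mathscr{P}$ coherent gives $\Sigma_{\mathscr{P}}$ a strictly convex support function, hence by (\cite{CLS} Theorem 7.2.4) the map $f$ is projective, as required for the definition to make sense.

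For (i), I would argue that $K_{\mathcal{Y}_{\mathscr{P}}}+Y_{\mathscr{P},0}$ is in fact trivial, not merely $f$-nef. The toric boundary $Y_{\mathscr{P},0}$ is the sum of the torus-invariant prime divisors coming from the rays $\Sigma_{\mathscr{P}}(1)$, together with those from the ray generated by the degree-direction — but in the cone picture $C(Q)\subset\mathbb{X}_{\mathbf{R}}$, the rays of $\Sigma_{\mathscr{P}}$ are exactly the cones $C(\omega)$ for $\omega\in I\cap\mathscr{P}=I$, i.e. the $1$-dimensional cones $\mathbf{R}_{\geqslant 0}\cdot(\omega,1)$. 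Since $K_{\mathcal{Y}_{\mathscr{P}}}=-\sum_{\rho\in\Sigma_{\mathscr{P}}(1)}D_\rho$ and the fibre $Y_{\mathscr{P},0}$ over the origin of $\mathbf{A}^1$ is the pullback of $\{t=0\}$, whose divisor of zeros is precisely $\sum_{\rho}D_\rho$ (each primitive ray generator $(\omega,1)$ pairs to $1$ with the degree functional cutting out $t$), one gets $K_{\mathcal{Y}_{\mathscr{P}}}+Y_{\mathscr{P},0}\sim 0$. This is what is meant by the sentence "$K+Y_{\mathscr{P},0}$ is trivial" in the preceding paragraph, and a trivial divisor is certainly $f$-nef.

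For (ii) and (iii), I would simply invoke the toric criteria already cited. By (\cite{CLS} Exercise 8.2.14 (a)) the triviality of $K_{\mathcal{Y}_{\mathscr{P}}}+Y_{\mathscr{P},0}$ shows $\mathcal{Y}_{\mathscr{P}}$ is Gorenstein; then (\cite{CLS} Proposition 11.4.12 (b)) gives canonical singularities in general, and terminal singularities precisely when every lattice point of $Q$ is a vertex of some cell of $\mathscr{P}$, i.e. when $I=Q(\mathbf{Z})\subset\mathscr{P}$ — which is exactly the hypothesis $\Sigma_{\mathscr{P}}(1)=I$ under the canonical bijection $\Sigma_{\mathscr{P}}(1)\to I\cap\mathscr{P}$ recalled above. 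For $\mathbf{Q}$-factoriality, (\cite{CLS} Proposition 4.2.7 \& Theorem 11.4.8) say $\mathcal{Y}_{\mathscr{P}}$ is $\mathbf{Q}$-factorial if and only if each cone $C(\sigma)$ is simplicial, which happens if and only if each cell $\sigma\in\mathscr{P}$ is a simplex, i.e. $\mathscr{P}$ is a triangulation. Assembling these three points completes the proof.

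The only genuine subtlety — the "main obstacle," though a mild one — is bookkeeping the boundary divisor $Y_{\mathscr{P},0}$ correctly: one must check that the rays of $\Sigma_{\mathscr{P}}$ are genuinely in bijection with $I$ (and that there is no extra ray, e.g. from an unbounded direction, since $Q$ is a bounded full-dimensional lattice polytope so $C(Q)$ is strongly convex), and that the scheme-theoretic fibre $Y_{\mathscr{P},0}=f^{-1}(\check\pi^{-1}(0))$ is reduced with the stated support so that $K_{\mathcal{Y}_{\mathscr{P}}}+Y_{\mathscr{P},0}$ really is the zero class rather than an effective nonzero one. Once that identification is in place, the statement follows formally from the cited toric-singularity results with no further computation.
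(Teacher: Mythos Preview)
Your proposal is correct and follows exactly the paper's approach: the paper's proof is the paragraph immediately preceding the lemma (ending ``Therefore, we have''), which verifies the same three conditions using the same citations from \cite{CLS}, and your write-up simply expands that paragraph with the explicit computation that $K_{\mathcal{Y}_{\mathscr{P}}}+Y_{\mathscr{P},0}\sim 0$ via the pairing of each primitive ray generator $(\omega,1)$ with the degree functional.
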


If $\mathcal{Y}_\mathscr{P}$ is a relative minimal model, we simply denote $\Sigma_\mathscr{P}(1)$ by $\Sigma(1)$, since it doesn't depend on the decomposition. Any two relative minimal models are $\mathbf{Q}$-factorial, and isomorphic up to codimension $1$. Therefore, they all have the same pseudo-effective cone, and the same Mori fan. We say that the Mori fan is canonical. 

The definition of the Mori fan is taken from \cite{HK}. We modify the definition, and work in the relative case. Assume the morphism between schemes $X\to \overline{X}$ is projective, and $\overline{X}=\spec R_0$ is affine. 

\begin{definition}
For a divisor $D$ on $X$, the section ring is the graded $R_0$-algebra
\[
\R(X,D):=\oplus_{n\in\mathbf{N}} \H^0(X,\mathcal{O}(nD)).
\]
\end{definition}

If $\R(X,D)$ is finitely generated over $R_0$, and $D$ is effective, then there is a rational map over $R_0$,
\[
f_D: X\dashrightarrow \proj \R(X,D).
\]

\begin{definition}
Let $D$, and $D'$ be two $\mathbf{Q}$-Cartier divisors on $X$ with section rings finitely generated over $R_0$. Then we say $D$ and $D'$ are Mori equivalent if the rational maps $f_D$ and $f_{D'}$ have the same Stein factorization. 
\end{definition}

\begin{definition}[the Mori chamber]
Let $X\to \overline{X}$ be as above. Assume that $\R(X,D)$ is finitely generated over $R_0$ for all divisors $D$ on $X$, and $\pic(X)_\mathbf{Q}=\NS_\mathbf{Q}(X)$. By a Mori chamber of $\NS_\mathbf{R}(X)$, we mean the closure of an equivalence class whose interior is open in $\NS_\mathbf{R}(X)$. 
\end{definition}

\begin{remark}
Different Mori chambers are always disjoint.
\end{remark}

\begin{definition}
If all the Mori chambers with their faces form a fan in $\NS_\mathbf{R}(X)$, it is called the Mori fan of $X$. 
\end{definition}

Fix an arbitrary paving $\mathscr{P}$ and consider the toric variety $\mathcal{Y}_\mathscr{P}$. $PA(\mathscr{P},\mathbf{Z})$ is a sublattice of $\mathbf{Z}^I$ of finite index. It might not be saturated. Recall a Cartier divisor for the toric variety $\mathcal{Y}_\mathscr{P}$ is described in terms of an integral piecewise linear function $\psi$ over the fan $\Sigma_\mathscr{P}$. Linear functions correspond to trivial Cartier divisors, and convex functions correspond to nef Cartier divisors. Therefore, $\nef(\mathcal{Y}_\mathscr{P})=C(\mathscr{P},\mathbf{R})$, and $\overline{\effcurve}(\mathcal{Y}_\mathscr{P})=H_{\mathscr{P},\mathbf{R}}^{\sat}$. 

If $I\subset \mathscr{P}$, the exact sequence for the Weil divisor class group is 
\[
\begin{diagram}
0&\rTo& Aff(\overline{X},\mathbf{Z})&\rTo&\mathbf{Z}^{\Sigma(1)}&\rTo& \Cl(\mathcal{Y}_\mathscr{P})&\rTo&0.
\end{diagram}
\]

Recall $\mathbb{X}=Aff(\overline{X},\mathbf{Z})^*$, we have 
\[
\mathbb{L}^*= \Cl(\mathcal{Y}_\mathscr{P}),\quad \mathbb{L}=\Hom(\Cl(\mathcal{Y}_\mathscr{P}),\mathbf{Z}).
\] 

For a $\mathbf{Q}$-factorial toric variety $\mathcal{Y}_\mathscr{P}$,
\[
\Cl(\mathcal{Y}_\mathscr{P})_\mathbf{Q}\cong\NS_\mathbf{Q}(\mathcal{Y}_\mathscr{P})\cong\pic(\mathcal{Y}_\mathscr{P})_\mathbf{Q}.
\]

Therefore if $\mathcal{Y}_\mathscr{P}$ is a relative minimal model, we can identify $\mathbb{L}^*$ with $\NS(\mathcal{Y}_\mathscr{P})$, and say that the Mori fan is fan supported in $\mathbb{L}^*_\mathbf{R}$. The Weil divisor corresponding to the primitive vector $(\omega,1)$ is denoted by $D_\omega$. For a $\mathbf{R}$-Weil divisor $D=\sum_{\omega\in \Sigma(1)} a_\omega D_\omega$, the associated function is defined by $\psi_D(\omega)=a_\omega$. The equivalent class in $\mathbb{L}^*_\mathbf{R}$ is also denoted by $\psi_D$.

\begin{theorem}\label{Mori fan for GKZ}
If $\mathcal{Y}_\mathscr{P}$ is a relative minimal model, the Mori fan exists. It is the secondary fan $\Sigma(Q)$ under the identification above.
\end{theorem}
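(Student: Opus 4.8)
The plan is to compute, for each divisor class on a fixed relative minimal model $\mathcal{Y}_\mathscr{P}$, its ``ample model'' $\proj\R(\mathcal{Y}_\mathscr{P},D)$ together with the canonical rational map to it, to recognize that model as the toric variety attached to a regular polyhedral subdivision of $Q$, and then to observe that Mori equivalence of classes becomes exactly the assertion that they induce the same subdivision --- which is the cone relation defining the secondary fan. First I would reduce: since any two relative minimal models are $\mathbf{Q}$-factorial and isomorphic in codimension one, they have the same divisor class group, the same divisorial section rings, and hence the same Mori fan; so fix $\mathcal{Y}_\mathscr{P}$ with $\mathscr{P}$ a coherent triangulation and $\Sigma_\mathscr{P}(1)=I$, and use the identification $\NS_\mathbf{R}(\mathcal{Y}_\mathscr{P})=\mathbb{L}^*_\mathbf{R}=\mathbf{R}^I/Aff(\overline X,\mathbf{R})$ from Section~\ref{GKZ secondary fan}, under which a torus-invariant $\mathbf{R}$-divisor $D_\psi=\sum_{\omega\in I}\psi(\omega)D_\omega$ has class $\psi\bmod Aff$. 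An important preliminary point is that \emph{every} class is relatively effective: for integral $\psi$ one has $\H^0(\mathcal{Y}_\mathscr{P},\mathcal{O}(D_\psi))=\bigoplus_{m\in P_\psi\cap\mathbb{X}^*}k\chi^m$, where $P_\psi=\{m\in\mathbb{X}^*_\mathbf{R}:\langle m,(\omega,1)\rangle+\psi(\omega)\geqslant 0\ \forall\,\omega\in I\}$ has recession cone $C(Q)^\vee$, which is full-dimensional because $C(Q)$ is; so $P_\psi\neq\emptyset$ for all $\psi$, and this is precisely what allows the Mori chambers to fill up all of $\mathbb{L}^*_\mathbf{R}$, in accordance with the completeness of $\Sigma(Q)$.

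The key computation is the ample model. One checks that $\R(\mathcal{Y}_\mathscr{P},D_\psi)$ is the semigroup algebra $k[\,C_\psi\cap(\mathbb{X}^*\times\mathbf{Z})\,]$ of the rational polyhedral cone $C_\psi=\{(m,n):n\geqslant 0,\ \langle m,(\omega,1)\rangle+n\psi(\omega)\geqslant 0\ \forall\,\omega\in I\}$, graded by $n$; being a finitely generated algebra over $R_0=k[C(Q)^\vee\cap\mathbb{X}^*]$, this shows $\mathcal{Y}_\mathscr{P}\to\overline{\mathcal{Y}}$ is a relative Mori dream space and that $f_{D_\psi}\colon\mathcal{Y}_\mathscr{P}\dashrightarrow\proj\R(\mathcal{Y}_\mathscr{P},D_\psi)$ is defined for every (rational, hence by homogeneity every) class. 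Next, $\proj\R(\mathcal{Y}_\mathscr{P},D_\psi)$ is the toric variety over $\overline{\mathcal{Y}}$ whose fan is the inner normal fan of the polyhedron $P_\psi$; since its recession cone is $C(Q)^\vee$, that fan refines $\overline\Sigma$ and, cut at height one, is exactly the fan $\Sigma_{\mathscr{P}_\psi}$ of the regular subdivision $\mathscr{P}_\psi$ of $Q$ recorded by the heights $\psi$ in the sense of Definition~\ref{definition of the cone of tiling}. Hence $\proj\R(\mathcal{Y}_\mathscr{P},D_\psi)\cong\mathcal{Y}_{\mathscr{P}_\psi}$, and $f_{D_\psi}$ is the identity-on-the-torus toric morphism $X_{\Sigma_\mathscr{P}}\to X_{\Sigma_{\mathscr{P}_\psi}}$ induced by $\Sigma_\mathscr{P}$ refining $\Sigma_{\mathscr{P}_\psi}$; having connected fibres onto a normal target, it is its own Stein factorization and depends only on $\mathscr{P}_\psi$. (The $\omega\in I$ that are not vertices of $\mathscr{P}_\psi$ are precisely those with $D_\omega$ contracted by $f_{D_\psi}$, and condition b) of Definition~\ref{definition of the cone of tiling} is exactly the constraint on $\psi$ that records this; for $\mathscr{P}_\psi=\mathscr{P}$ one recovers $\nef(\mathcal{Y}_\mathscr{P})=C(\mathscr{P},\mathbf{R})$.)

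To conclude, recall (Proposition~\ref{complete fan}, Definition~\ref{definition of the secondary fan}) that the cones of $\Sigma(Q)$, the faces of the maximal cones $C(\mathscr{T})$ for coherent triangulations $\mathscr{T}$, are indexed by the coherent subdivisions $\mathscr{P}'$ of $Q$, with $\psi,\psi'$ lying in the relative interior of one and the same cone iff they induce the same subdivision. Combined with the computation above, $D_\psi$ and $D_{\psi'}$ are Mori equivalent iff $\mathscr{P}_\psi=\mathscr{P}_{\psi'}$ iff $\psi,\psi'$ lie in the relative interior of the same cone of $\Sigma(Q)$. So the Mori equivalence classes are exactly the relative interiors of the cones of $\Sigma(Q)$; those open in $\mathbb{L}^*_\mathbf{R}$ are the relative interiors of the full-dimensional cones $C(\mathscr{T})$ --- full-dimensional by the identity $C(\mathscr{T})=C(\mathscr{T},\mathbf{R})\times\mathbf{R}^{I_\emptyset}_{\geqslant 0}$ --- so the Mori chambers are exactly the $C(\mathscr{T})$, and since $\Sigma(Q)$ is a fan, the Mori fan exists and equals $\Sigma(Q)$.

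The hard part, I expect, is the ample-model identification: pinning down $\proj$ of the section ring as the toric variety of the regular subdivision with the (somewhat nonstandard) orientation conventions used here, verifying that $f_{D_\psi}$ genuinely is the toric structure morphism, and --- for the non-movable chambers, where prime divisors get contracted --- making sure that ``same Stein factorization'' is equivalent to ``same subdivision'' and not merely to ``isomorphic target''. A variant closer to the paper's emphasis on variation of GIT is to present $\mathcal{Y}_\mathscr{P}\to\overline{\mathcal{Y}}$ as a GIT quotient of $\mathbf{A}^I=\spec k[x_\omega:\omega\in I]$ by $\Hom(\mathbb{L}^*,\mathbb{G}_m)$ at the stability parameter $\psi$, invoke that the relative Mori fan of a Mori dream space coincides with the GIT wall-and-chamber fan of its Cox ring (with its natural $\mathbb{L}^*$-grading), and then quote that this GIT fan is the secondary fan (\cite{CLS} Ch.~14--15, \cite{GKZ}); the reduction and matching arguments above are then unchanged.
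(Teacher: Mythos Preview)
Your approach is essentially the paper's: identify the ample model $\proj\R(\mathcal{Y}_\mathscr{P},D_\psi)$ with $\mathcal{Y}_{\mathscr{P}_\psi}$ via the polytope $P_\psi$, and then read off the Mori chambers as the maximal cones $C(\mathscr{T})$. The paper restricts attention to the interiors of the maximal cones and, after identifying $P_D=P_E$, writes $D=f_D^*(E)+F$ with $E$ ample on $\mathcal{Y}_\mathscr{T}$ and $F$ $f_D$-exceptional; this is enough to see that different $C(\mathscr{T})^\circ$ lie in different Mori chambers, and completeness of $\Sigma(Q)$ finishes.

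There is, however, a genuine slip in your argument. You assert that $f_{D_\psi}$ is the toric \emph{morphism} $X_{\Sigma_\mathscr{P}}\to X_{\Sigma_{\mathscr{P}_\psi}}$ ``induced by $\Sigma_\mathscr{P}$ refining $\Sigma_{\mathscr{P}_\psi}$'', and then use connected fibres to conclude it is its own Stein factorization. But $\Sigma_\mathscr{P}$ need not refine $\Sigma_{\mathscr{P}_\psi}$: already when $\psi$ lies in $C(\mathscr{T})^\circ$ for a triangulation $\mathscr{T}\neq\mathscr{P}$ (a flip of $\mathscr{P}$, say), $\mathscr{P}_\psi=\mathscr{T}$ and neither fan refines the other, so $f_{D_\psi}$ is only the birational map determined by the identity on the torus. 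This is precisely why the paper writes $f_D$ as the \emph{rational} map $\mathcal{Y}_\mathscr{P}\dashrightarrow\mathcal{Y}_\mathscr{T}$ coming from $\Sigma_\mathscr{P}\to\overline\Sigma\leftarrow\Sigma_\mathscr{T}$ and invokes the decomposition $D=f_D^*(E)+F$ instead. Your argument is easily repaired along these lines (or by resolving indeterminacy with a common refinement and then taking Stein factorization), and you in fact flag the issue in your last paragraph; but as written the refinement claim is false and the ``connected fibres'' sentence does not apply.
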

\begin{proof}
We show that each full dimensional cone $C(\mathscr{T})$ in the secondary fan is a Mori chamber. Fix such a coherent triangulation $\mathscr{T}$. Recall $C(\mathscr{T})=C(\mathscr{T},\mathbf{R})\times \mathbf{R}_{\geqslant 0}^{I_\emptyset}$. For any Cartier divisor $D$ in the interior $C(\mathscr{T})^\circ$, let $\psi_D$ be the associated function in $\mathbf{Z}^I$. $\psi_D=\psi_{E}+\psi_F$ for $\psi_{E}\in C(\mathscr{T},\mathbf{Z})$ and $\psi_F\in \mathbf{Z}_{\geqslant 0}^{I_\emptyset}$. The corresponding divisors are $E$ and $F$. We can multiply $D$ by a positive integer to make both $E$ and $F$ Cartier.

Each Cartier divisor $D=\sum_{\omega\in \Sigma(1)} a_\omega D_\omega$ defines a polyhedron
\[
P_D:=\big\{m\in Aff(\overline{X}_\mathbf{R},\mathbf{R}): \langle m, (\omega,1)\rangle\geqslant -a_\omega, \forall \omega\in \Sigma(1)\big\}.
\] 

By (\cite{CLS} Proposition 4.3.3), 
\[
\Gamma(\mathcal{Y}_\mathscr{P},\mathcal{O}_{\mathcal{Y}_\mathscr{P}}(D))=\bigoplus_{m\in P_D\cap \mathbb{X}^*} k\cdot \mathrm{X}^m.
\]

Notice that for any $\omega\in I_\emptyset$, $\omega\in \sigma$ for some simplex $\sigma\in\mathscr{P}$. Let the vertices of $\sigma$ be $\{\omega_i\}$, $\psi_F\in \mathbf{R}_{\geqslant 0}^{I_\emptyset}$ means that the inequalities on $\{\omega_i\}$ imply the inequality on $\omega$. Therefore $P_D=P_E$. Consider $g_{\psi_E}$, a $\mathscr{T}$-piecewise affine function. It is strict convex because $D$ is in $C(\mathscr{T})^\circ$. Again multiply $D$ by a positive integer to make $g_{\psi_E}$ integral with respect to $\mathscr{T}$. The corresponding Cartier divisor $E$ is ample for $\mathcal{Y}_\mathscr{T}$. Therefore
\[
\proj \R(\mathcal{Y}_\mathscr{P}, D)=\proj k[S(P_D)]\cong\proj \R(\mathcal{Y}_\mathscr{T}, E)\cong\mathcal{Y}_\mathscr{T}.
\]

It follows that $f_D:\mathcal{Y}_\mathscr{P}\dashrightarrow \mathcal{Y}_\mathscr{T}$ is the rational map defined by $\Sigma_\mathscr{P}\rightarrow \overline{\Sigma}\leftarrow \Sigma_\mathscr{T}$ and $D=f_D^*(E)+F$ for $E$ ample, and $F$ $f_D$-exceptional. That means each $C(\mathscr{T})^\circ$ is contained in one Mori chamber, and different $C(\mathscr{T})^\circ$ are contained in different Mori chambers. Since $C(\mathscr{T})$ and their faces form a complete fan $\Sigma(Q)$ in $\mathbb{L}^*_\mathbf{R}$, the Mori chambers agree with $\{C(\mathscr{T})\}$. As a result, the Mori fan exists and is the same with $\Sigma(Q)$.
\end{proof}

\begin{corollary}
All relative minimal models $\mathcal{Y}\to \overline{\mathcal{Y}}$ are toric models $\mathcal{Y}_\mathscr{P}\to\overline{\mathcal{Y}}$ for some minimal triangulation $\mathscr{P}$.
\end{corollary}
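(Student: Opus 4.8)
The plan is to reduce the statement to the toric ample-model computation already performed in the proof of Theorem~\ref{Mori fan for GKZ}, using that all relative minimal models are $\mathbf{Q}$-factorial and isomorphic in codimension one. First I would fix one toric relative minimal model to compare against. Choose a coherent triangulation $\mathscr{P}_0$ of $Q$ that uses every point of $I$ as a vertex; such a triangulation exists, for instance by perturbing generically a strictly convex piecewise affine lift of $I$, which already puts every point of $I$ on the lower hull. By the lemma characterizing toric relative minimal models, $\mathcal{Y}_0:=\mathcal{Y}_{\mathscr{P}_0}\to\overline{\mathcal{Y}}$ is then a relative minimal model, and $\Cl(\mathcal{Y}_0)_\mathbf{Q}=\NS(\mathcal{Y}_0)_\mathbf{Q}=\mathbb{L}^*_\mathbf{Q}$.

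Now let $f\colon \mathcal{Y}\to\overline{\mathcal{Y}}$ be an arbitrary relative minimal model. By the fact recalled above, $\mathcal{Y}$ is $\mathbf{Q}$-factorial and isomorphic in codimension one to $\mathcal{Y}_0$; since $\mathcal{Y}$ is rational with rational singularities, this identifies $\NS(\mathcal{Y})_\mathbf{Q}=\Cl(\mathcal{Y})_\mathbf{Q}$ with $\mathbb{L}^*_\mathbf{Q}$ and, for any Weil divisor $D$ on $\mathcal{Y}$ with strict transform $D_0$ on $\mathcal{Y}_0$, yields $H^0(\mathcal{Y},\mathcal{O}(nD))=H^0(\mathcal{Y}_0,\mathcal{O}(nD_0))$ for all $n\geqslant 0$ (sections of a reflexive sheaf extend across a closed subset of codimension $\geqslant 2$), hence $\R(\mathcal{Y},D)\cong\R(\mathcal{Y}_0,D_0)$ as graded $R_0$-algebras. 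Since $f$ is projective and $\overline{\mathcal{Y}}=\spec R_0$ is affine, the relatively ample cone of $\mathcal{Y}$ is a nonempty open subcone of $\NS(\mathcal{Y})_\mathbf{R}=\mathbb{L}^*_\mathbf{R}$, so it meets the interior $C(\mathscr{T})^{\circ}$ of some maximal cone of the secondary fan $\Sigma(Q)$, with $\mathscr{T}$ a coherent triangulation. I would then pick an effective relatively ample Cartier divisor $A$ on $\mathcal{Y}$ whose class lies in $C(\mathscr{T})^{\circ}$. Because $A$ is ample and $\overline{\mathcal{Y}}$ is affine, $\mathcal{Y}\cong\proj\R(\mathcal{Y},A)$ over $\overline{\mathcal{Y}}$, and by the previous paragraph this equals $\proj\R(\mathcal{Y}_0,A_0)$ over $\overline{\mathcal{Y}}$. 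Since $[A_0]$ lies in the interior of the Mori chamber $C(\mathscr{T})$ of $\mathcal{Y}_0$, the computation inside the proof of Theorem~\ref{Mori fan for GKZ} identifies $\proj\R(\mathcal{Y}_0,A_0)$ with $\mathcal{Y}_\mathscr{T}$ over $\overline{\mathcal{Y}}$. Thus $\mathcal{Y}\cong\mathcal{Y}_\mathscr{T}$ over $\overline{\mathcal{Y}}$, so $\mathcal{Y}$ is toric; and since $\mathcal{Y}_\mathscr{T}$ is $\mathbf{Q}$-factorial and terminal, $\mathscr{T}$ is a triangulation with $I\subset\mathscr{T}$, i.e. $\Sigma_\mathscr{T}(1)=I$ — a minimal triangulation.

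The hard part will be the reduction to the toric model $\mathcal{Y}_0$: it rests on the (already invoked, but genuinely nontrivial) fact that any two relative minimal models are $\mathbf{Q}$-factorial and isomorphic in codimension one, which is what lets one transport the section ring $\R(\mathcal{Y},A)$ to the toric $\R(\mathcal{Y}_0,A_0)$ compatibly over $\overline{\mathcal{Y}}$. Given that, the remaining ingredients are routine: the existence of a coherent triangulation of $Q$ using all of $I$, the reflexivity of $\mathcal{O}(nD)$ on a normal variety, and the verbatim reuse of the ample-model computation from Theorem~\ref{Mori fan for GKZ}. One small point deserving care is that $\NS(\mathcal{Y})_\mathbf{Q}=\Cl(\mathcal{Y})_\mathbf{Q}$ for these models, so that the relatively ample cone is actually open in $\mathbb{L}^*_\mathbf{R}$ and $A$ can be chosen with class in the interior of a maximal secondary-fan cone rather than on a wall; alternatively one avoids this by taking any relatively ample $A$, identifying $\proj\R(\mathcal{Y}_0,A_0)$ with the toric model $\mathcal{Y}_\mathscr{P}$ of the coarsest coherent paving $\mathscr{P}$ with $[A_0]\in C(\mathscr{P})$, and then using $\mathbf{Q}$-factoriality of $\mathcal{Y}$ to force $\mathscr{P}$ to be a triangulation.
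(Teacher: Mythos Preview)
The paper states this corollary without proof, treating it as an immediate consequence of Theorem~\ref{Mori fan for GKZ} together with the already-invoked fact that any two relative minimal models are $\mathbf{Q}$-factorial and isomorphic in codimension one. Your argument spells out exactly this implicit reasoning---transporting an ample class on an arbitrary minimal model $\mathcal{Y}$ to the reference toric model $\mathcal{Y}_0$, identifying section rings, and reading off $\mathcal{Y}\cong\mathcal{Y}_\mathscr{T}$ from the Mori-chamber computation---so it is correct and is essentially the approach the paper has in mind.
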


\begin{corollary}
If $\mathcal{Y}_\mathscr{P}$ is a relative minimal model, then
\[
\overline{\effdivisor}(\mathcal{Y}_\mathscr{P})=\effdivisor(\mathcal{Y}_\mathscr{P})=\NS_\mathbf{R}(\mathcal{Y}_\mathscr{P})=\pic(\mathcal{Y}_\mathscr{P})_\mathbf{R}.
\]

and the moving cone is
\[
\mov(\mathcal{Y}_\mathscr{P})=\bigcup_{I\subset \mathscr{T}}C(\mathscr{T}).
\]\label{mov}
\end{corollary}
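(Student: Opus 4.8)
The plan is to read off both statements from the identification $\mathbb{L}^*_\mathbf{R} = \NS_\mathbf{R}(\mathcal{Y}_\mathscr{P})$ together with the structure of $\Sigma(Q)$ established in Theorem~\ref{Mori fan for GKZ}. For the first chain of equalities, the key point is that the secondary fan $\Sigma(Q)$ is a \emph{complete} fan in $\mathbb{L}^*_\mathbf{R}$ (Definition~\ref{definition of the secondary fan}), and every one of its top-dimensional cones $C(\mathscr{T})$ is a Mori chamber, hence in particular an effective chamber. Thus the pseudo-effective cone $\overline{\effdivisor}(\mathcal{Y}_\mathscr{P})$, being a union of Mori chambers, must be all of $\mathbb{L}^*_\mathbf{R}$. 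Since $\effdivisor \subseteq \overline{\effdivisor}$ always, and $\overline{\effdivisor} = \mathbb{L}^*_\mathbf{R}$, we get $\effdivisor(\mathcal{Y}_\mathscr{P}) = \NS_\mathbf{R}(\mathcal{Y}_\mathscr{P})$ as well. The identification $\NS_\mathbf{R} = \pic_\mathbf{R}$ is just the $\mathbf{Q}$-factoriality of a relative minimal model, already recorded above. (Alternatively, and more concretely, one can note that every divisor $D_\omega$ with $\omega \in \Sigma(1)$ is visibly effective, and these span $\mathbb{L}^*_\mathbf{R}$; combined with completeness of $\Sigma(Q)$ this again forces $\overline{\effdivisor} = \NS_\mathbf{R}$.)

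For the moving cone, the plan is to use the toric description of the birational maps $f_D$ from the proof of Theorem~\ref{Mori fan for GKZ}. A $\mathbf{Q}$-Cartier divisor $D$ lies in $\mov(\mathcal{Y}_\mathscr{P})$ iff its stable base locus has codimension $\geqslant 2$, equivalently iff the rational map $f_D : \mathcal{Y}_\mathscr{P} \dashrightarrow \proj\R(\mathcal{Y}_\mathscr{P},D)$ is an isomorphism in codimension $1$. From the proof of the theorem, if $D$ lies in the interior of a chamber $C(\mathscr{T})$ then $f_D$ is the map $\mathcal{Y}_\mathscr{P} \dashrightarrow \mathcal{Y}_\mathscr{T}$ and $D = f_D^* E + F$ with $E$ ample on $\mathcal{Y}_\mathscr{T}$ and $F$ effective and $f_D$-exceptional. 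So $D$ is movable iff $F = 0$, i.e.\ iff $\psi_D$ has no component supported on $I_\emptyset = I \setminus \mathscr{T}$. Since $I = Q(\mathbf{Z})$ is fixed, $F = 0$ exactly when $\mathscr{T}$ uses all of $I$ as vertices, i.e.\ $I \subset \mathscr{T}$. Hence the interior points of $C(\mathscr{T})$ are movable precisely for those $\mathscr{T}$ with $I \subset \mathscr{T}$, and passing to closures (the movable cone is closed) gives $\mov(\mathcal{Y}_\mathscr{P}) = \bigcup_{I \subset \mathscr{T}} C(\mathscr{T})$.

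One should be a little careful at the boundaries: a divisor on a wall between two chambers $C(\mathscr{T}_1)$ and $C(\mathscr{T}_2)$ must be handled by a limiting argument, observing that $f_D$ for such $D$ contracts $\mathcal{Y}_\mathscr{P}$ onto the toric variety associated to the (non-maximal) cone of $\Sigma(Q)$ containing $D$ in its relative interior, which corresponds to the common coarsening of $\mathscr{T}_1$ and $\mathscr{T}_2$; the exceptional part $F$ is then $0$ iff \emph{every} maximal chamber adjacent to that wall is indexed by a triangulation containing $I$, which is the same as saying the wall is a face of $\bigcup_{I\subset\mathscr{T}} C(\mathscr{T})$. The main obstacle is thus purely bookkeeping: making precise that ``$F$-exceptional and effective implies $F=0$ characterizes movability'' and that the combinatorial condition ``$I \subset \mathscr{T}$'' is exactly the vanishing of the $I_\emptyset$-components, uniformly across all faces. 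Once the interiors are handled as above, completeness of $\Sigma(Q)$ and closedness of $\mov$ close the argument with no further input.
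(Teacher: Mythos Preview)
The paper gives no explicit proof for this corollary; it is stated as an immediate consequence of Theorem~\ref{Mori fan for GKZ}. Your write-up correctly spells out the intended deduction.

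There is one small logical slip in your first paragraph: from $\effdivisor \subseteq \overline{\effdivisor} = \mathbb{L}^*_\mathbf{R}$ alone you cannot conclude $\effdivisor = \NS_\mathbf{R}$. You need either the convexity of $\effdivisor$ (a convex subset of a finite-dimensional vector space whose closure is the whole space has nonempty interior, and then $\mathrm{int}(\effdivisor)=\mathrm{int}(\overline{\effdivisor})$ forces equality), or the direct observation that for \emph{every} $D\in C(\mathscr{T})$, not just interior points, the polytope $P_D=P_E$ from the proof of Theorem~\ref{Mori fan for GKZ} is nonempty because $E$ is nef on the projective toric variety $\mathcal{Y}_\mathscr{T}$; hence every rational class is already effective.

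For the moving cone your argument is the standard one and is what the paper has in mind: the small $\mathbf{Q}$-factorial modifications of $\mathcal{Y}_\mathscr{P}$ are exactly the $\mathcal{Y}_\mathscr{T}$ with $I\subset\mathscr{T}$, and for those $I_\emptyset=\emptyset$ so $C(\mathscr{T})=\nef(\mathcal{Y}_\mathscr{T})$. Your wall-by-wall discussion at the end is more than the paper supplies and is also slightly off (on a face of $\bigcup_{I\subset\mathscr{T}}C(\mathscr{T})$ there can be adjacent chambers with $I\not\subset\mathscr{T}$, yet the class is still movable). The cleaner route, once the interiors are handled, is to invoke the general fact (as in \cite{HK}) that for a Mori dream space the movable cone equals the union of the nef cones of its small $\mathbf{Q}$-factorial modifications, which here are precisely the $C(\mathscr{T})$ with $I\subset\mathscr{T}$.
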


\begin{corollary}
The union $\bigcup_{I\subset \mathscr{T}}\overline{C}(\mathscr{T})$ is a convex polyhedral cone. 
\end{corollary}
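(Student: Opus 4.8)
The plan is to reduce the assertion to the convexity of the moving cone, which the preceding corollaries have already made available. By Corollary~\ref{mov}, for any relative minimal model $\mathcal{Y}_\mathscr{P}$ the set $S:=\bigcup_{I\subset\mathscr{T}}\overline{C}(\mathscr{T})$ is precisely the moving cone $\mov(\mathcal{Y}_\mathscr{P})\subset\NS_\mathbf{R}(\mathcal{Y}_\mathscr{P})=\mathbb{L}^*_\mathbf{R}$ (each $C(\mathscr{T})$, being the image of the closed cone $\widetilde C(\mathscr{T})$, is already closed, so the bar in the statement is only for emphasis). It therefore suffices to prove (i) that $S$ is convex, and (ii) that a convex set which is a finite union of rational polyhedral cones is a rational polyhedral cone.

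For (i), I would invoke the standard fact that the moving cone of a normal variety projective over an affine base is convex: it is by definition the closure of the convex cone spanned by the classes of movable divisors, and movable divisors are stable under addition because $\mathbf{B}(D_1+D_2)\subseteq\mathbf{B}(D_1)\cup\mathbf{B}(D_2)$ (from the multiplication maps $\H^0(\mathcal{O}(nD_1))\otimes\H^0(\mathcal{O}(nD_2))\to\H^0(\mathcal{O}(n(D_1+D_2)))$) and a union of two closed subsets of codimension $\geqslant 2$ still has codimension $\geqslant 2$. Concretely in our situation this can be read off the proof of Theorem~\ref{Mori fan for GKZ}: when $I\subset\mathscr{T}$ one has $I_\emptyset=\emptyset$, so every $D\in\overline{C}(\mathscr{T})$ is, up to a positive multiple, the strict transform of a nef divisor on $\mathcal{Y}_\mathscr{T}$ under the small modification $\mathcal{Y}_\mathscr{P}\dashrightarrow\mathcal{Y}_\mathscr{T}$, hence movable; conversely every movable class already lies in some $\overline{C}(\mathscr{T})$ by Corollary~\ref{mov}.

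For (ii), there are only finitely many coherent triangulations of $Q$ (classical; compare Proposition~\ref{complete fan}), so $S=\sigma_1\cup\cdots\cup\sigma_k$ with each $\sigma_i$ a rational polyhedral cone containing the origin. The convex hull of such a union is the Minkowski sum $\sigma_1+\cdots+\sigma_k$: this sum is a convex cone containing every $\sigma_i$ because the remaining summands contain $0$, and conversely $v_1+\cdots+v_k$ with $v_i\in\sigma_i$ is a positive multiple of the convex combination $\tfrac1k v_1+\cdots+\tfrac1k v_k$ of points of $S$, hence lies in the convex hull (which is itself a cone). Since $S$ is convex by (i), we get $S=\sigma_1+\cdots+\sigma_k$, and a finite Minkowski sum of rational polyhedral cones is rational polyhedral by Minkowski--Weyl. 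The only genuine input is (i); step (ii) is soft, and the identification $S=\mov(\mathcal{Y}_\mathscr{P})$ is free from Corollary~\ref{mov}. I expect the main subtlety to be essentially bookkeeping — keeping the closures and the finiteness of coherent triangulations straight — since the real geometric content (that the models $\mathcal{Y}_\mathscr{T}$ with $I\subset\mathscr{T}$ are mutually small modifications, so that classes nef on one of them become movable on all of them) has already been absorbed into the earlier corollaries.
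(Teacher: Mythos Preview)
Your proof is correct and follows the same route as the paper: the paper's entire argument is the one sentence ``This is because the closure of the moving cone is always convex,'' followed by the remark that one can also prove the statement directly. You supply the details the paper suppresses --- in particular the passage from ``convex'' to ``polyhedral'' via the Minkowski-sum argument in your step~(ii) --- but the key input (identify the union with $\mov(\mathcal{Y}_\mathscr{P})$ via Corollary~\ref{mov} and use convexity of the moving cone) is identical.
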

\begin{proof}
This is because the closure of the moving cone is always convex. It is also easy to prove the statement directly.
\end{proof}

We generalize the notion of Mori dream spaces to varieties that are projective over affine varieties. Corollary \ref{mov} implies that the relative minimal models are Mori dream spaces. According to \cite{HK}, the Mori theory here is an instance of the theory of VGIT. In this case, $\mathcal{Y}_\mathscr{P}$ are all GIT quotients by Cox's construction. Define $G=\Hom(\Cl(\mathcal{Y}_\mathscr{P}), \mathbb{G}_m)$. The homomorphism $\mathbf{Z}^{\Sigma(1)}\to \Cl(\mathcal{Y}_\mathscr{P})$ gives a $G$-action on $\mathbf{A}^{\Sigma(1)}$. The problem is how to take the quotient of $\mathbf{A}^{\Sigma(1)}$ by $G$. 

Since the story from this perspective is well presented in \cite{CLS}, we relate our notations with those in \cite{CLS}, for the convenience of the readers. The cone $C_\nu$ is identified with $C(Q)$, and we are in the situation of (\cite{CLS} Proposition 14.3.11). A stability condition is given by a character $\psi\in \widehat{G}_\mathbf{R}=\mathbb{L}^*_\mathbf{R}=C_\beta$. The secondary fan $\Sigma(Q)$ is a complete fan in $C_\beta$. Each maximal cone $C(\mathscr{T})$ is a GIT chamber. For any $\psi\in C(\mathscr{T})^\circ$, we have $\mathbf{A}^{\Sigma(1)}//_\psi G\cong \mathcal{Y}_\mathscr{T}$. Therefore, we say that on the Landau--Ginzburg side, the secondary fan $\Sigma(Q)$ controls the variation of GIT quotients. 


\section{Gluing the Families}\label{glue GKZ families}
We go back to the Fano side and construct the compactification $\mathscr{T}_Q$ of the moduli of toric pairs $(X_Q, \Theta)$.

\begin{definition}
A full dimensional simplex $\sigma\subset \overline{X}_\mathbf{R}$ is called regular if the set $\{(v_i,1)\}$ form a basis of $\mathbb{X}$ for $\{v_i\}$ the set of vertices of $\sigma$. In other words, $\sigma$ is regular if the cone $C(\sigma)$ is a regular polyhedral cone for the lattice $\mathbb{X}$ (\cite{CLS} Definition 1.2.16).
\end{definition}

From now on, assume that $Q$ contains a full dimensional regular simplex. We can always rescale $Q$ so that it contains a standard cube in $\overline{X}_\mathbf{R}$. A standard cube contains a basis for $X$, and thus contains a regular simplex. For the toric variety $X_Q$, rescaling is only changing the polarization. Therefore, this hypothesis is mild. Under this assumption $\mathbb{L}^*$ is free, and the following exact sequence is exact on the right
\begin{equation}
\begin{diagram}
0&\rTo& \mathbb{L}&\rTo& (\mathbf{Z}^I)^*&\rTo^p& \mathbb{X}&\rTo&0.
\end{diagram}\label{exact seq}
\end{equation}

Choose an arbitrary $g$-dimensional regular simplex $\sigma\subset Q$. Define a linear operator $L_\sigma: \mathbf{R}^I \to Aff$.
\begin{definition}
$L_\sigma(\psi)$ is the affine extension to $\overline{X}_\mathbf{R}$ of $\psi|_\sigma$, and then we restrict it to $I$ . 
\end{definition}

\begin{definition}
$\Psi: I\to \mathbb{L}_\mathbf{Q}$ is defined by
\[
\langle\overline{\psi},\Psi(\omega)\rangle=\psi(\omega)-L_\sigma(\psi)(\omega),\  \forall \overline{\psi}\in \mathbb{L}^*_\mathbf{Q},
\]
where $\overline{\psi}$ is the image of $\psi\in \mathbf{Q}^I$. 
\end{definition}

The definition of $\Psi(\omega)$ is independent of the choice of the representative $\psi$. 

\begin{lemma}
The values of $\Psi$ are in $\mathbb{L}$.  
\end{lemma}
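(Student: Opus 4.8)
The plan is to show that $\Psi(\omega) \in \mathbb{L} = \Hom(\mathbb{L}^*, \mathbf{Z})$ rather than merely in $\mathbb{L}_\mathbf{Q} = \Hom(\mathbb{L}^*, \mathbf{Q})$, i.e.\ that $\Psi(\omega)$ takes integer values on $\mathbb{L}^*$. By construction, for $\overline{\psi} \in \mathbb{L}^*_\mathbf{Q}$ the pairing $\langle \overline{\psi}, \Psi(\omega)\rangle = \psi(\omega) - L_\sigma(\psi)(\omega)$, so it suffices to check that this quantity is an integer whenever $\psi \in \mathbf{Z}^I$ represents an element of $\mathbb{L}^*$ (recall $\mathbb{L}^* = \mathbf{Z}^I / Aff(\overline{X},\mathbf{Z})$, which is free under our standing hypothesis that $Q$ contains a full-dimensional regular simplex). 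So I want: for $\psi \in \mathbf{Z}^I$, the value $\psi(\omega) - L_\sigma(\psi)(\omega) \in \mathbf{Z}$.

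The key point is that $\sigma$ is a \emph{regular} simplex: the vertices $\{v_i\}$ of $\sigma$ have the property that $\{(v_i,1)\}$ is a $\mathbf{Z}$-basis of $\mathbb{X}$. Consequently $\{v_i\}$ is an affine basis for the lattice $\overline{X}$, meaning every lattice point $\omega \in \overline{X}$ can be written $\omega = \sum_i a_i v_i$ with $a_i \in \mathbf{Z}$ and $\sum_i a_i = 1$; equivalently, under $\ev: \mathbb{X} \to Aff(\overline{X},\mathbf{Z})^*$, evaluation at the $v_i$ gives a dual basis to a $\mathbf{Z}$-basis of $Aff(\overline{X},\mathbf{Z})$. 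Now $L_\sigma(\psi)$ is by definition the unique affine function on $\overline{X}_\mathbf{R}$ agreeing with $\psi$ on the vertices of $\sigma$; since $\psi(v_i) \in \mathbf{Z}$ and $\{v_i\}$ is an affine lattice basis, $L_\sigma(\psi)$ is an \emph{integral} affine function, i.e.\ $L_\sigma(\psi) \in Aff(\overline{X},\mathbf{Z})$. Therefore $L_\sigma(\psi)(\omega) \in \mathbf{Z}$ for every $\omega \in I \subset \overline{X}$, and $\psi(\omega) \in \mathbf{Z}$ as well, so their difference is an integer.

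Thus the steps are: (i) reduce the claim to showing $\langle \overline{\psi}, \Psi(\omega)\rangle \in \mathbf{Z}$ for $\psi$ ranging over integral representatives, using that $\mathbb{L}^*$ is generated by the images of $\mathbf{Z}^I$ and (under the regular-simplex hypothesis) is torsion-free so that $\mathbb{L} = (\mathbb{L}^*)^*$ pairs integrally iff it pairs integrally on these generators; (ii) observe $L_\sigma$ sends $\mathbf{Z}^I$ into $Aff(\overline{X},\mathbf{Z})$ precisely because the vertices of the regular simplex $\sigma$ form an affine lattice basis of $\overline{X}$; (iii) conclude $\psi(\omega) - L_\sigma(\psi)(\omega) \in \mathbf{Z}$. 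One should also double-check well-definedness in the integral setting: changing $\psi$ by an element of $Aff(\overline{X},\mathbf{Z})$ changes both $\psi(\omega)$ and $L_\sigma(\psi)(\omega)$ by the same integer, consistent with the earlier remark that $\Psi(\omega)$ is independent of the representative.

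I do not expect a serious obstacle here; the only thing requiring care is pinning down the exact sense in which ``regular simplex'' gives an affine lattice basis, and making sure the identification $\mathbb{L} = \Hom(\mathbb{L}^*,\mathbf{Z})$ is the one used (so that integrality of the pairing against all of $\mathbf{Z}^I/\!\!\sim$ is exactly what ``$\Psi(\omega)\in\mathbb{L}$'' means). This uses the exact sequence~\eqref{exact seq}, which is exact on the right precisely under the standing hypothesis, so $\mathbb{L}^*$ being free is available. Everything else is the short computation above.
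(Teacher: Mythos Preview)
Your proof is correct and follows essentially the same approach as the paper's: the paper's proof is simply ``Since $\sigma$ is regular, $L_\sigma$ maps $\mathbf{Z}^I$ to $Aff(\overline{X},\mathbf{Z})$. If $\overline{\psi}$ is in $\mathbb{L}^*$, $\langle\overline{\psi},\Psi(\omega)\rangle$ is an integer. Hence $\Psi(\omega)\in \mathbb{L}$.'' You have unpacked exactly these two steps---that regularity of $\sigma$ makes $L_\sigma(\psi)$ an integral affine function when $\psi\in\mathbf{Z}^I$, and that integrality of the pairing against $\mathbb{L}^*$ forces $\Psi(\omega)\in\mathbb{L}$---with more detail than the paper provides, including the careful observation (not made explicit in the paper) that one uses freeness of $\mathbb{L}^*$ (equivalently, saturatedness of $\mathbb{L}$ in $(\mathbf{Z}^I)^*$) for the last implication.
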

\begin{proof}
Since $\sigma$ is regular, $L_\sigma$ maps $\mathbf{Z}^I$ to $Aff(\overline{X},\mathbf{Z})$. If $\overline{\psi}$ is in $\mathbb{L}^*$, $\langle\overline{\psi},\Psi(\omega)\rangle$ is an integer. Hence $\Psi(\omega)\in \mathbb{L}$. 
\end{proof}

Let $\mathscr{T}$ be a coherent triangulation. Denote $\pic(\mathcal{Y}_\mathscr{T})\times \mathbf{Z}^{I_\emptyset}$ by $\mathbb{L}_\mathscr{T}^*$. We choose the integral structure on $\mathbb{L}_\mathbf{R}$ to be the dual $\mathbb{L}_\mathscr{T}$. $\mathbb{L}_\mathscr{T}$ contains $\mathbb{L}$, and depends on $\mathscr{T}$. Define $g_{\Psi,\mathscr{T}}: Q\to \mathbb{L}_\mathbf{R}$ by affinely interpolating $\Psi$ inside each simplex of $\mathscr{T}$. By the mirror interpretation, we have $C(\mathscr{T})=\nef(\mathcal{Y}_\mathscr{T})\times \mathbf{R}_{\geqslant 0}^{I_\emptyset}$, $C(\mathscr{T})^\vee=\overline{\effcurve}(\mathcal{Y}_\mathscr{T})\times (\mathbf{R}_{\geqslant 0}^{I_\emptyset})^*\subset \mathbb{L}_\mathbf{R}$, and $H_\mathscr{T}^{gp}=\pic(\mathcal{Y}_\mathscr{T})^*$. As a result, $S_{C(\mathscr{T})}:=C(\mathscr{T})^\vee\cap\mathbb{L}_\mathscr{T}=H_\mathscr{T}^\sat\times (\mathbf{N}^{I_\emptyset})^*$. If further $I\subset\mathscr{T}$, then $H_\mathscr{T}^\sat=S_{C(\mathscr{T})}$. 

\begin{lemma}
The $\mathscr{T}$-piecewise function $g_{\Psi,\mathscr{T}}$ is equal to the composition of the universal $\mathscr{T}$-piecewise function $\varphi: Q\to H_\mathscr{T}^{\gp}$ in Section~\ref{GKZ standard family} and the map $H_\mathscr{T}^\gp\to \mathbb{L}_\mathscr{T}$. For each codimension-$1$ wall $\rho$, the bending parameter $p_\rho$ is the curve class $[V(C(\rho))]\in \effcurve(\mathcal{Y}_\mathscr{T})$. 
In particular, $g_{\Psi,\mathscr{T}}$ is integral with respect to $\mathbb{L}_\mathscr{T}$.
\end{lemma}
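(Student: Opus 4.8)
The plan is to identify the two $\mathscr{T}$-piecewise affine functions $g_{\Psi,\mathscr{T}}$ and $\varphi$ by showing they have the same bending parameters, since by the discussion after Definition~\ref{definition of bending parameter} an element of $PA(\mathscr{T})/\mathrm{Aff}$ is determined by its collection of bending parameters, and both functions are normalized to vanish on the chosen regular simplex $\sigma$ (the function $\Psi$ vanishes on $\sigma$ because $L_\sigma$ restricted to $\sigma$ is the identity, and $\varphi$ was required in Section~\ref{GKZ standard family} to vanish on a top-dimensional cell — here we take that cell to be $\sigma$). So the real content is the computation of the bending parameter of $g_{\Psi,\mathscr{T}}$ across each codimension-$1$ wall $\rho$ and the verification that it equals the bending parameter of $\varphi$, which by Proposition~\ref{bending parameters for GKZ} is the element $p_\rho(\psi)=\psi_i(\omega)-\psi_j(\omega)$ of $H_\mathscr{T}^{\mathrm{gp}}$, where $\omega$ is a lattice point mapping to a minimal generator of $\mathbb{X}/(L(\rho)\cap\mathbb{X})$ on the $\sigma_i$ side.

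First I would fix a wall $\rho$, the common face of maximal simplices $\sigma_i,\sigma_j\in\mathscr{T}$, and pick a lattice point $\omega\in\overline{X}$ as in Proposition~\ref{bending parameters for GKZ}: $\omega$ maps to a minimal generator of $\mathbb{X}/(L(\rho)\cap\mathbb{X})\cong\mathbf{Z}$ and lies on the $\sigma_i$ side. Evaluated on any $\overline{\psi}\in\mathbb{L}^*_\mathbf{Q}$, the bending parameter of $g_{\Psi,\mathscr{T}}$ at $\rho$ is $\langle\overline{\psi},\,(\Psi_i-\Psi_j)(\omega)\rangle$, where $\Psi_i,\Psi_j$ are the affine extensions of $g_{\Psi,\mathscr{T}}|_{\sigma_i}$, $g_{\Psi,\mathscr{T}}|_{\sigma_j}$. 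By the defining relation of $\Psi$, for a representative $\psi\in\mathbf{Q}^I$ of $\overline{\psi}$ we have $\langle\overline{\psi},\Psi(\nu)\rangle=\psi(\nu)-L_\sigma(\psi)(\nu)$ for all vertices $\nu$; since affine interpolation commutes with the affine function $L_\sigma(\psi)$, this gives $\langle\overline{\psi},(\Psi_i-\Psi_j)(\omega)\rangle=g_{\psi,\mathscr{T},i}(\omega)-g_{\psi,\mathscr{T},j}(\omega)=\psi_i(\omega)-\psi_j(\omega)=p_\rho(\psi)$, matching Proposition~\ref{bending parameters for GKZ} after dualizing. This shows $g_{\Psi,\mathscr{T}}$ and $\varphi$ have equal bending parameters in $H_\mathscr{T}^{\mathrm{gp}}$ and hence, both vanishing on $\sigma$, that $g_{\Psi,\mathscr{T}}$ is the composition of $\varphi$ with $H_\mathscr{T}^{\mathrm{gp}}\to\mathbb{L}_\mathscr{T}$ — in particular $g_{\Psi,\mathscr{T}}$ is integral for $\mathbb{L}_\mathscr{T}$, since $\varphi$ is integral for $H_\mathscr{T}^{\mathrm{gp}}$ and the map is a lattice homomorphism.

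It remains to identify the bending parameter $p_\rho$, as an element of $H_\mathscr{T}^{\mathrm{gp}}=\pic(\mathcal{Y}_\mathscr{T})^*=\H_1$-type curve classes, with the class $[V(C(\rho))]\in\effcurve(\mathcal{Y}_\mathscr{T})$ of the torus-invariant curve corresponding to the wall $C(\rho)$ in $\Sigma_\mathscr{T}$. Here I would invoke the toric wall-crossing / star subdivision picture: under the correspondence $\Sigma_\mathscr{T}(1)\leftrightarrow I\cap\mathscr{T}$ and the identification $\nef(\mathcal{Y}_\mathscr{T})=C(\mathscr{T},\mathbf{R})$, $\overline{\effcurve}(\mathcal{Y}_\mathscr{T})=H_{\mathscr{T},\mathbf{R}}^{\sat}$ from Section~\ref{secondary fan}, the pairing of a nef class $\psi_D$ with a wall curve $V(C(\rho))$ is exactly the "bend" $\psi_i(\omega)-\psi_j(\omega)$ read off from the wall relation $n_\omega+n_{\omega'}+\sum(\text{other})=0$ among the primitive ray generators surrounding $C(\rho)$ (compare \cite{CLS} Proposition 6.4.4 / the toric Mori theory in \cite{CLS} Chapter 15.3). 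Since the bending parameter, as a linear functional on $PA(\mathscr{T},\mathbf{R})/\mathrm{Aff}=\pic(\mathcal{Y}_\mathscr{T})_\mathbf{R}$, agrees with the intersection pairing $D\mapsto D\cdot V(C(\rho))$ on the nef cone and both are linear, they agree; hence $p_\rho=[V(C(\rho))]$. I expect this last identification — matching the combinatorial bending functional with the geometric curve class via the wall relation — to be the main obstacle, mostly a matter of carefully transporting the sign and normalization conventions of \cite{CLS} into the present notation; the rest is the bookkeeping of affine interpolation and the uniqueness of functions in $PA(\mathscr{T})/\mathrm{Aff}$ by their bending data.
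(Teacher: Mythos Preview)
Your approach is essentially the paper's: compute the bending parameter of $g_{\Psi,\mathscr{T}}$ at each wall $\rho$ by pairing with a test function $\psi$, match it with the formula $p_\rho(\psi)=\psi_i(\omega)-\psi_j(\omega)$ from Proposition~\ref{bending parameters for GKZ}, and then cite \cite{CLS} for the identification with the curve class $[V(C(\rho))]$ (the paper invokes Proposition~6.3.8 rather than 6.4.4, but these express the same intersection pairing).

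One caveat on your normalization step. The regular simplex $\sigma\subset Q$ used to define $L_\sigma$ and $\Psi$ is fixed once at the start of Section~\ref{glue GKZ families}, independently of any triangulation, and there is no reason it should be a cell of the given $\mathscr{T}$. So you cannot in general ``take that cell to be $\sigma$'' for $\varphi$ (the top-dimensional cell on which $\varphi$ vanishes must lie in $\mathscr{T}$), nor does $g_{\Psi,\mathscr{T}}$ vanish on $\sigma$ in general: $g_{\Psi,\mathscr{T}}$ agrees with $\Psi$ only at vertices of $\mathscr{T}$, and the vertices of $\sigma$ may lie in $I_\emptyset$. This is harmless for the lemma as actually used. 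The paper's own proof establishes only that the bending parameters of $g_{\Psi,\mathscr{T}}$ are the images of those of $\varphi$ under $H_\mathscr{T}^{\gp}\to\mathbb{L}_\mathscr{T}$; since $\varphi$ in Section~\ref{GKZ standard family} is only pinned down up to a global affine function anyway, ``equal to the composition'' is to be read modulo $Aff$. Integrality with respect to $\mathbb{L}_\mathscr{T}$ and the subsequent application (pulling back the standard family via Corollary~\ref{GKZ functorial direct}/\ref{GKZ functorial}) depend only on the bending parameters, so your argument, with the normalization sentence dropped, is exactly what is needed.
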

\begin{proof}
For any codimension-$1$ wall $\rho$ that is the intersection of the maximal cells $\sigma_i,\sigma_j\in \mathscr{T}$, we have, for $\psi\in \pic(\mathcal{Y}_\mathscr{T})$, 
\begin{equation}
g_{\Psi,\mathscr{T}}\vert_{\sigma_i}(\psi)-g_{\Psi,\mathscr{T}}\vert_{\sigma_j}(\psi)=\psi\vert_{\sigma_i}-\psi\vert_{\sigma_j}.
\end{equation}

Compare with the bending parameters for $\varphi$ in Equation~\eqref{bending parameters for the standard family in GKZ}, we see the bending parameters for $g_{\Psi_\mathscr{T}}$ are obtained from the bending parameters for $\varphi$ via $H_\mathscr{T}^\gp\to \mathbb{L}_\mathscr{T}$. The interpretation of the bending parameters in terms of curve classes is a standard result in toric geometry (\cite{CLS} Proposition 6.3.8). 
\end{proof}

\begin{remark}
We make a base change from $\mathbb{L}$ to $\mathbb{L}_\mathscr{T}$ because we need the central fiber reduced.
\end{remark}

Construct the polyhedron $Q_{g_{\Psi,\mathscr{T}}}$, the graded ring $R_{g_{\Psi,\mathscr{T}}}=k[S(Q_{g_{\Psi,\mathscr{T}}})]$, and the family $\mathcal{X}_\mathscr{T}:=\proj R_{g_{\Psi,\mathscr{T}}}$ over $U_{C(\mathscr{T})}:=\spec k[S_{C(\mathscr{T})}]$, with the morphism $\pi: \mathcal{X}_\mathscr{T}\to U_{C(\mathscr{T})}$. Take the $\pi$-ample line bundle $\mathcal{L}:=\mathcal{O}(1)$. Define a section of $\mathcal{L}$
\[
\vartheta:=\sum_{\omega\in Q(\mathbf{Z})} X^{(\omega,\Psi(\omega))}.
\]

By definition $(\omega,\Psi(\omega))$ is $C(\mathscr{T})^\vee$-above $(\omega,g_{\Psi,\mathscr{T}}(\omega))$, therefore $(\omega,\Psi(\omega))\in Q_{g_{\Psi,\mathscr{T}}}(\mathbf{Z})$ and $\vartheta$ is a section of $\mathcal{L}$. Take the divisor $\Theta:=(\vartheta)_0$. Moreover, $g_{\Psi,\mathscr{T}}(\omega)=\Psi(\omega)$ if and only if $\omega\in \mathscr{T}\cap I$. By Proposition~\ref{stable toric pairs}, $(\mathcal{X}_\mathscr{T}, \mathcal{L},\Theta,\varrho)$ is a stable toric pair over $U_{C(\mathscr{T})}$. By Cororllary~\ref{GKZ functorial}, $\mathcal{X}_\mathscr{T}$ is the pull--back of the standard family for $\mathscr{T}$. Define the log structures on $\mathcal{X}_\mathscr{T}/U_{C(\mathscr{T})}$ to be the log structures pulled back from the standard family, and denote them by $(\mathcal{X}_\mathscr{T}, P_\mathscr{T})/(U_{C(\mathscr{T})}, M_\mathscr{T})$. 
\begin{lemma}
The family $\pi: (\mathcal{X}_\mathscr{T}, P_\mathscr{T}, \mathcal{L},\vartheta,\varrho)\to (U_{C(\mathscr{T})},M_\mathscr{T})$ is an object in $\mathscr{K}_Q(U_{C(\mathscr{T})})$, where $\mathscr{K}_Q$ is the stack defined in \cite{ols08}. 
\end{lemma}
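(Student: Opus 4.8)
The plan is to verify that the tuple $\pi\colon(\mathcal{X}_\mathscr{T},P_\mathscr{T},\mathcal{L},\vartheta,\varrho)\to(U_{C(\mathscr{T})},M_\mathscr{T})$ satisfies, point by point, the defining conditions of the moduli stack $\mathscr{K}_Q$ from \cite{ols08}: that $\pi$ is a projective log smooth morphism with the right kind of log structure, that $\mathcal{L}$ is a relatively ample line bundle with a $\T$-linearization $\varrho$ whose weight polytope is $Q$, that the geometric fibers are stable toric varieties with the combinatorial type prescribed by a cell of $\mathscr{P}$, and that $\vartheta$ cuts out a divisor $\Theta$ making $(\mathcal{X}_\mathscr{T},\Theta)$ a stable toric pair. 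Almost every ingredient has already been assembled earlier in the excerpt, so the proof is largely a matter of citing the right statement for each clause.

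First I would recall that, by Corollary~\ref{GKZ functorial}, $\mathcal{X}_\mathscr{T}$ is the pull-back of the standard family for $\mathscr{T}$ along the toric morphism $U_{C(\mathscr{T})}\to\spec k[H_\mathscr{T}^\sat]$ induced by $H_\mathscr{T}^\gp\to\mathbb{L}_\mathscr{T}$; indeed the hypothesis of that corollary is met because $g_{\Psi,\mathscr{T}}$ is the composition of the universal function $\varphi$ with $H_\mathscr{T}^\gp\to\mathbb{L}_\mathscr{T}$ up to an affine function, as established in the preceding lemma. The log structures $P_\mathscr{T}$ and $M_\mathscr{T}$ are by definition the pull-backs of those on the standard family. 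Since the excerpt already records (via \cite{ols08} 3.1.12) that the standard family $(\mathcal{X}_\mathscr{P},M_\mathscr{P})\to(S,M_S)$ is integral and log smooth, and since log smoothness and integrality of a morphism of log schemes are stable under (strict) base change, $\pi$ is integral and log smooth. Projectivity and relative ampleness of $\mathcal{L}=\mathcal{O}(1)$ come from the $\proj$ construction of $R_{g_{\Psi,\mathscr{T}}}$ over $U_{C(\mathscr{T})}$ together with Proposition~\ref{stable toric pairs} and Corollary~\ref{GKZ functorial direct}, which also gives the cartesian square exhibiting the base change. The $\T$-linearization $\varrho$ is the one coming from the $\mathbb{X}$-grading of $S(Q_{g_{\Psi,\mathscr{T}}})$, exactly as in Section~\ref{section toric constructions}, and its fiberwise weights range over $Q(\mathbf{Z})$, so the weight polytope is $Q$ as required.

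Next I would address the stable-pair condition. We chose
\[
\vartheta=\sum_{\omega\in Q(\mathbf{Z})}X^{(\omega,\Psi(\omega))},
\]
so every coefficient $c_\omega$ equals $1$ and in particular is nowhere vanishing on $U_{C(\mathscr{T})}$. Hence Proposition~\ref{stable toric pairs} applies verbatim and shows $(\mathcal{X}_\mathscr{T},\Theta,\varrho)\to U_{C(\mathscr{T})}$ is a stable toric pair degenerating $(X_Q,\Theta)$, with $\Theta$ flat over the base and not containing any torus orbit of any fiber; this is precisely the properness/stability input demanded in the definition of $\mathscr{K}_Q(U_{C(\mathscr{T})})$. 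The combinatorial type of the central fiber is read off from where $g_{\Psi,\mathscr{T}}(\omega)=\Psi(\omega)$, namely the cells of $\mathscr{T}$, matching the stratification data carried by objects of $\mathscr{K}_Q$.

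The step I expect to be the main obstacle is matching conventions: checking that our saturated monoids $H_\mathscr{T}^\sat$, $S_{C(\mathscr{T})}$, and the pulled-back log structures really do coincide with the (possibly non-saturated) data that \cite{ols08} uses to define families over a log base, and that the base $U_{C(\mathscr{T})}$ with log structure $M_\mathscr{T}$ is an admissible test object for the stack $\mathscr{K}_Q$ rather than merely for its normalization. The excerpt has already flagged that our standard family is the \emph{saturation} of Olsson's, so one must argue that pulling back Olsson's log structure along the inclusion $H_\mathscr{P}\hookrightarrow H_\mathscr{P}^\sat$ (and then along $H_\mathscr{T}^\gp\to\mathbb{L}_\mathscr{T}$) still yields an object of $\mathscr{K}_Q$ — equivalently, that the resulting log smooth family with its section satisfies Olsson's axioms on the nose. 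I would handle this by invoking \cite{ols08} Lemma~3.1.32 and the integrality of the inclusion $P\to S(Q_\varphi)$ noted just before Proposition~\ref{stable toric pairs}, together with the fact that base change along $U_{C(\mathscr{T})}\to\spec k[H_\mathscr{T}^\sat]$ is strict, so no new verification of log smoothness is needed beyond what \cite{ols08} 3.1.12 already provides. Everything else is a direct appeal to the cited results.
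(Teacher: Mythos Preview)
Your overall strategy---verify the clauses of Olsson's definition (loc.\ cit.\ 3.7.1) one by one, using that $\mathcal{X}_\mathscr{T}$ is pulled back from the standard family---is exactly what the paper does, only the paper compresses it to a single sentence. The difference is in where the non-trivial content lies. The paper's proof singles out condition~(vii) of (\cite{ols08} 3.7.1) as the only clause needing an argument and dispatches it with \cite{ols08} Lemma~3.1.24; everything else is declared routine. You, by contrast, do not isolate (vii) at all and instead locate the difficulty in the saturation discrepancy between $H_\mathscr{T}$ and $H_\mathscr{T}^\sat$, invoking Lemma~3.1.32. That is a different lemma addressing a different concern, and it does not by itself discharge condition~(vii); so while your write-up covers more of the routine checks explicitly, it misses the one step the paper actually flags.

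Two smaller points. First, your claim that ``every coefficient $c_\omega$ equals $1$'' is not correct: writing $\vartheta=\sum_\omega c_\omega\vartheta_\omega$ one has $c_\omega=\mathrm{X}^{\Psi(\omega)-g_{\Psi,\mathscr{T}}(\omega)}$, which is $1$ only for $\omega\in\mathscr{T}\cap I$ (as the paragraph before the lemma observes) and is a nontrivial monomial---vanishing on part of the toric boundary---otherwise. Proposition~\ref{stable toric pairs} still applies because it only requires nonvanishing for $\omega\in\mathscr{T}\cap I$, but your stated reason is off. Second, your appeal to ``strict base change'' for the log structures is fine, but the paper has already set this up by \emph{defining} $P_\mathscr{T},M_\mathscr{T}$ as pull-backs from the standard family, so there is nothing further to argue there.
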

\begin{proof}
It suffices to check the definition of the stack $\mathscr{K}_Q$ in (\cite{ols08} 3.7.1). In particular, (vii) is satisfied because of (loc. cit. Lemma 3.1.24).
\end{proof}

Define the chart $U=\coprod_{\mathscr{T}} U_{C(\mathscr{T})}$, where the index is over all coherent triangulation $\mathscr{T}$. By the $2$-Yoneda lemma, We can pick a corresponding morphism $F: U\to \mathscr{K}_Q$. The next step is to define the pre-equivalence relation $R\to U\times U=\coprod_{\mathscr{T}_1,\mathscr{T}_2}U_{C(\mathscr{T}_1)}\times U_{C(\mathscr{T}_2)}$. 

First, consider the case $\mathscr{T}_1=\mathscr{T}_2$, and denote it by $\mathscr{T}$. Compare the two lattices $\mathbb{L}^*$ and $\mathbb{L}_\mathscr{T}^*$. The quotient $\mathbb{L}^*/\mathbb{L}_\mathscr{T}^*$ is a finite abelian group. Let $G_\mathscr{T}$ be the kernel of $T_{\mathbb{L}_\mathscr{T}^*}\to T_{\mathbb{L}^*}$, and $m_\mathscr{T}$ be the biggest order of elements in $\mathbb{L}^*/\mathbb{L}^*_\mathscr{T}$. Assume that $m_\mathscr{T}$ is invertible in $k$, then $G_\mathscr{T}$ is isomorphic to the constant group scheme with fiber $\mathbb{L}^*/\mathbb{L}^*_\mathscr{T}$, and is \'{e}tale over $k$. Define $R_\mathscr{T}=U_{C(\mathscr{T})}\times G_\mathscr{T}\to U_{C(\mathscr{T})}\times U_{C(\mathscr{T})}$, where the first factor is the projection and the second factor is the group action. Denote the two projections from $R_\mathscr{T}$ to $U_{C(\mathscr{T})}$ by $s$ and $t$.  Since $G_\mathscr{T}$ is \'{e}tale, $s$ is the first projection $U_{C(\mathscr{T})}\times G_\mathscr{T}$ to $U_{C(\mathscr{T})}$, $s$ is \'{e}tale and surjective. The morphism $t$ is the action $U_{C(\mathscr{T})}\times G_\mathscr{T}\to U_{C(\mathscr{T})}$. It is the composition of the projection and the isomorphism $(x,g)\to (gx, g)$, and is also \'{e}tale and surjective. We get an \'{e}tale pre-equivalence relation $R_\mathscr{T}$. The coarse moduli space of the stack $[U_{C(\mathscr{T})}/R_\mathscr{T}]=[U_{C(\mathscr{T})}/G_\mathscr{T}]$ is the toric variety defined by the cone $C(\mathscr{T})^\vee$ and lattice $\mathbb{L}$.

\begin{proposition}\label{first faithful}
We have a natural injective morphism $R_\mathscr{T}\to U_{C(\mathscr{T})}\times_{\mathscr{K}_Q}U_{C(\mathscr{T})}$. 
\end{proposition}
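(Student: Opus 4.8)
The plan is as follows. First I would exhibit the morphism as coming from a $G_\mathscr{T}$-equivariant structure on the object $F$ of $\mathscr{K}_Q(U_{C(\mathscr{T})})$ classified by the family $(\mathcal{X}_\mathscr{T}, P_\mathscr{T}, \mathcal{L}, \vartheta, \varrho)/(U_{C(\mathscr{T})}, M_\mathscr{T})$. By construction $G_\mathscr{T}\subset T_{\mathbb{L}_\mathscr{T}^*}$, and its action on $U_{C(\mathscr{T})}$ is the restriction of the big torus action. The $\mathbb{L}_\mathscr{T}$-grading of $R_{g_{\Psi,\mathscr{T}}}=k[S(Q_{g_{\Psi,\mathscr{T}}})]$ makes $G_\mathscr{T}$ act on $\mathcal{X}_\mathscr{T}$ over this action on the base, commuting with the $\mathbf{Z}$-grading used for $\proj$ and with $\varrho$, so that $(\mathcal{X}_\mathscr{T}, \mathcal{L}, \varrho)$ is $G_\mathscr{T}$-equivariant; the section $\vartheta=\sum_{\omega}\mathrm{X}^{(\omega,\Psi(\omega))}$ is $G_\mathscr{T}$-fixed because every weight $\Psi(\omega)$ lies in $\mathbb{L}$, which is exactly the group of characters of $T_{\mathbb{L}_\mathscr{T}^*}$ that are trivial on $G_\mathscr{T}=\ker(T_{\mathbb{L}_\mathscr{T}^*}\to T_{\mathbb{L}^*})$; and $M_\mathscr{T}$, $P_\mathscr{T}$ are pulled back from the standard family, whose toric construction is equivariant for the base torus (Corollary~\ref{GKZ functorial direct}), so they inherit compatible $G_\mathscr{T}$-linearizations. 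Packaging these together gives a canonical isomorphism $s^{*}F\xrightarrow{\sim}t^{*}F$ over $R_\mathscr{T}=U_{C(\mathscr{T})}\times G_\mathscr{T}$ obeying the cocycle condition — equivalently, the restriction of $F$ descends to a morphism $[U_{C(\mathscr{T})}/G_\mathscr{T}]\to\mathscr{K}_Q$ — and this isomorphism together with $(s,t)$ is the desired morphism $R_\mathscr{T}\to U_{C(\mathscr{T})}\times_{\mathscr{K}_Q}U_{C(\mathscr{T})}$.

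For injectivity I would argue as follows. The morphism sits over $U_{C(\mathscr{T})}$ via $s$ and the first projection (indeed it is the base change along $F$ of $U_{C(\mathscr{T})}\times_{[U_{C(\mathscr{T})}/G_\mathscr{T}]}U_{C(\mathscr{T})}$), so it is a monomorphism precisely when, for every geometric point $y\in U_{C(\mathscr{T})}$, the map $h\mapsto(hy,\phi_h)$ on $G_\mathscr{T}$ is injective; by the cocycle condition this is equivalent to injectivity of the stabilizer map $\mathrm{Stab}_{G_\mathscr{T}}(y)\to\Aut_{\mathscr{K}_Q}(F_y)$. Since $m_\mathscr{T}$ is invertible, $G_\mathscr{T}$ is étale and it suffices to check this on geometric points. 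The subtlety is that the $G_\mathscr{T}$-action on the toric variety $U_{C(\mathscr{T})}$ is not free along the boundary, so injectivity is invisible on the underlying schemes and must be read off from the finer data. If $y$ lies in the orbit $O_\tau$ and $h\in\mathrm{Stab}_{G_\mathscr{T}}(y)=G_\mathscr{T}\cap T_\tau$ satisfies $\phi_h=\mathrm{id}$, then $h$ acts trivially on all of $F_y$; from the induced (trivial) automorphism of the base log point one gets $\mathrm{X}^p(h)=1$ for $p$ ranging over lifts of the characteristic monoid $\overline{M}_{\mathscr{T},y}$, and from the (trivial) action on the homogeneous coordinates $\vartheta_\omega$ of the fiber one gets $\mathrm{X}^{g_{\Psi,\mathscr{T}}(\omega)}(h)=1$ for all $\omega\in Q(\mathbf{Z})$. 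Combined with the fact that $h$ already kills $\tau^{\perp}\cap\mathbb{L}_\mathscr{T}$, these relations force $\mathrm{X}^p(h)=1$ for every $p\in\mathbb{L}_\mathscr{T}$, hence $h=1$. (When $I\subset\mathscr{T}$ the base log structure alone already accounts for all of $\mathbb{L}_\mathscr{T}$ and the second family of relations is unnecessary; in general the two together span $\mathbb{L}_\mathscr{T}$, which is precisely why the base was refined from $\mathbb{L}$ to $\mathbb{L}_\mathscr{T}$.)

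The verifications in the first step are routine bookkeeping (compatibility of the $\mathbb{L}_\mathscr{T}$-, $\mathbf{Z}$- and torus-gradings, checking that the data assemble into a fine log object of $\mathscr{K}_Q$, and the cocycle condition), and the reduction of injectivity to the stabilizer statement is formal. The \emph{real} point — which I expect to be the main obstacle — is the last one: showing that a nontrivial stabilizer element of $G_\mathscr{T}$ always acts nontrivially on the fiber with all its structure, i.e.\ that no information about $h$ is lost in passing from $U_{C(\mathscr{T})}$ to $\mathscr{K}_Q$. This is a local toric computation, stratum by stratum, in which one must keep careful track of which piece of the data (the characteristic monoid of $M_\mathscr{T}$, or the weights $g_{\Psi,\mathscr{T}}(\omega)$ of the theta-monomials) witnesses each direction of $\mathbb{L}_\mathscr{T}/\mathbb{L}$.
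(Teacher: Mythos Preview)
Your construction of the morphism is exactly the paper's: exhibit the $G_\mathscr{T}$-equivariant structure on the family by letting $G_\mathscr{T}$ act through the $\mathbb{L}_\mathscr{T}$-grading, check that $\vartheta$ is fixed because each $\Psi(\omega)\in\mathbb{L}$, and verify compatibility with $\varrho$ and with the log structures. The paper then disposes of injectivity in a single clause --- ``The action of $G_\mathscr{T}$ exchanges with any base change $S\to U_{C(\mathscr{T})}$, and is faithful'' --- without any stratum-by-stratum stabilizer analysis. So your second paragraph is not a different approach but an attempt to flesh out what the paper leaves as an assertion.

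One genuine correction to that elaboration. In the paper's explicit formulas the isomorphism $\zeta^*M_\mathscr{T}\to M_\mathscr{T}$ on the \emph{base} log structure is the modification $H_\mathscr{T}\to\mathcal{O}_S^*$, $p\mapsto 1$ --- i.e.\ the identity, independently of $\zeta$. (The two charts $p\mapsto \mathrm{X}^p$ and $p\mapsto \mathrm{X}^p(\zeta)\mathrm{X}^p$ differ by a unit and hence define literally the same log structure.) So the induced automorphism of the base log point is always trivial and cannot produce the relations $\mathrm{X}^p(h)=1$ for lifts of $\overline{M}_{\mathscr{T},y}$ that you claim. The nontrivial information about $\zeta$ lives entirely in the modification of the \emph{total-space} log structure, $(\alpha,p)\mapsto \mathrm{X}^{g_{\Psi,\mathscr{T}}(\alpha)}(\zeta)$, equivalently in the action $F_\zeta$ on the graded ring $R_{g_{\Psi,\mathscr{T}}}$. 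Thus at the closed point, where $\tau^\perp=0$ contributes nothing, it is your ``second family'' of relations $\mathrm{X}^{g_{\Psi,\mathscr{T}}(\alpha)}(h)=1$ (taken over all $\alpha\in S(Q)$, not only degree~$1$) that must do all the work; you should check directly that these span $\mathbb{L}_\mathscr{T}/\mathbb{L}$, rather than relying on the base log structure to cover the $H_\mathscr{T}^{\gp}$-direction.
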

\begin{proof}
We claim that each element in $G_\mathscr{T}$ induces an isomorphism in $\mathscr{K}_Q(U_{C(\mathscr{T})})$. Let $\zeta$ be an $S$-point of $G_\mathscr{T}$ for some $k$-scheme $S$. The element $\zeta$ is acting on the algebra $R_S:=\mathcal{O}_S[S_{C(\mathscr{T})}]$ by 
\[
g: \mathrm{X}^p\mapsto \mathrm{X}^p(\zeta)\mathrm{X}^p. 
\]

The pull--back $(g^*\mathcal{X}_\mathscr{T},g^*\mathcal{L})$ is $\proj R_S\otimes_{R_S}R_{g_{\Psi,\mathscr{T}}}$, where the map $R_S\to R_S$ is $g$. We use $R_{g_{\Psi,\mathscr{T}}}=R_S[S(Q_{g_{\Psi,\mathscr{T}}})]\subset R_S[\mathbb{X}\times \mathbb{L}_\mathscr{T}]$. Then $(g^*\mathcal{X}_\mathscr{T},g^*\mathcal{L})$ is isomorphic to $(\mathcal{X}_\mathscr{T},\mathcal{L})$ over $U_{C(\mathscr{T})}$ by an isomorphism 
\begin{align*}
F_\zeta: g^*R_{g_{\Psi,\mathscr{T}}}=R_S\otimes_{R_S}R_{g_{\Psi,\mathscr{T}}}&\longrightarrow R_{g_{\Psi,\mathscr{T}}},\\
1\otimes \mathrm{X}^{(n,\alpha,p)}&\longmapsto \mathrm{X}^p(\zeta)\mathrm{X}^{(n,\alpha,p)}.
\end{align*} 

The isomorphism $F_\zeta$ preserves the $\mathbb{X}$-grading. Therefore $G_\mathscr{T}$ is acting on $(\mathcal{X}_\mathscr{T},\mathcal{L})$ commuting with $\pi$ and the action $\varrho$. The action on the log structures are the modifications by $H_\mathscr{T}\to \mathcal{O}_S^*$ and $S(Q)\rtimes H_\mathscr{T}\to \mathcal{O}_S^*$
\begin{align}
p&\mapsto 1,\\
(\alpha,p)&\mapsto \mathrm{X}^{g_{\Psi,\mathscr{T}}(\alpha)}(\zeta) .
\end{align}

The induced log structure is isomorphic to the original log structure,  and thus $G_\mathscr{T}$ preserves the log morphism. The action by $G_\mathscr{T}$ also preserves the section $\vartheta$ because $\Psi(\omega)\in\mathbb{L}$ for all $\omega\in Q(\mathbf{Z})$.

The action of $G_\mathscr{T}$ exchanges with any base change $S\to U_{C(\mathscr{T})}$, and is faithful. 
\end{proof}

Next, consider two different Mori chambers $C(\mathscr{T}_1)$ and $C(\mathscr{T}_2)$. Assume that $\tau:=C(\mathscr{T}_1)\cap C(\mathscr{T}_2)$ is a wall. We want to glue the two associated families together. Regard $\omega\in I$ as a vector in $\mathbb{X}_\mathbf{R}$. According to (\cite{CLS} Chapter 15.3), there are only two cases. 

The first case corresponds to divisorial contraction, and is called the divisorial case. In this case, according to (\cite{CLS} Theorem 15.3.6), one of the triangulation, say $\mathscr{T}_1$, has more vertices than the other. Moreover, there exists $\omega\in I$ such that  $\mathscr{T}_1$ is star subdivision of $\mathscr{T}_2$ at $\omega$. Let $\sigma_0$ be the simplex which contains $\omega$ in $\mathscr{T}_2$. Assume the vertices of $\sigma_0$ are $\{v_0,\ldots,v_n\}$. For any $\sigma_i\in \mathscr{T}_1$ such that $\omega\in \overline{\sigma}_i$, define $\psi_i$ to be the affine function over $\overline{\sigma}_i$ which is $1$ at $\omega$, and $0$ at other vertices. Then 
\begin{align*}
g^{12}&:=g_{\Psi,\mathscr{T}_1}-g_{\Psi,\mathscr{T}_2}\\
&=\left\{
\begin{array}{rl}
0 & \text{if } x\notin \text{Star}(\sigma_0)\\
\psi_i(x)q_\tau &\text{if } x\in\sigma_i
\end{array} \right. ,
\end{align*}

for some $q_\tau\in\mathbb{L}$. 

Write $\omega$ as an affine combination of $\{v_0,\ldots,v_n\}$,
\[
\omega=\sum_{i=0}^n a_i v_i, \quad\text{with }  \sum_{i=0}^g a_i=1.
\]

Compute $q_\tau$,
\begin{align*}
q_\tau&=g^{12}(\omega)\\
&=g_{\Psi,\mathscr{T}_1}(\omega)-g_{\Psi,\mathscr{T}_2}(\omega)\\
&=\Psi(\omega)-\sum_{i=0}^g a_i\Psi(v_i).
\end{align*}

Therefore, for any $\psi'\in\mathbf{R}^I$,
\begin{align*}
q_\tau(\psi')&=(\psi',\Psi(\omega))-\sum_{i=0}^n a_i(\psi',\Psi(v_i))\\
&=\psi'(\omega)-L_\sigma(\psi')(\omega)-\sum_{i=0}^n a_i\psi'(v_i)+\sum_{i=0}^n a_iL_\sigma(\psi')(v_i)\\
&=\psi'(\omega)-\sum_{i=0}^n a_i\psi'(v_i).
\end{align*}

It follows that $-q_\tau\in C(\mathscr{T}_1)^\vee$, and $q_\tau\in C(\mathscr{T}_2)^\vee$. Since $C_\tau^\vee=C(\mathscr{T}_1)^\vee+C(\mathscr{T}_2)^\vee$, $q_\tau\in (S_\tau^*)_\mathbf{Q}$. 

\begin{remark}
Since $\mathscr{T}_1$, $\mathscr{T}_2$ give different integral structures on $\mathbb{L}_\mathbf{R}$, we haven't defined $S_\tau$ yet. However, $(S_\tau^*)_\mathbf{Q}$ is well defined. 
\end{remark}

The second case corresponds to a flip $\mathcal{Y}_{\mathscr{T}_1}\dashrightarrow \mathcal{Y}_{\mathscr{T}_2}$, and is called the flipping case. Since $\mathcal{Y}_{\mathscr{T}_1}$ and $\mathcal{Y}_{\mathscr{T}}$ are isomorphic up to codimension $1$, $\mathscr{T}_1$ and $\mathscr{T}_2$ have the same $I_\emptyset$. Assume $\tau$ corresponds to the paving $\mathscr{P}$. In this case, the wall $\tau$ comes from a wall between $\nef(\mathcal{Y}_{\mathscr{T}_1})$ and $\nef(\mathcal{Y}_{\mathscr{T}_2})$, and thus corresponds to an extremal ray $\mathcal{R}\subset \overline{\effcurve}(\mathcal{Y}_{\mathscr{T}_1})$ or $-\mathcal{R}\subset \overline{\effcurve}(\mathcal{Y}_{\mathscr{T}_2})$. This curve class defines two sets
\[
J_-:=\{v\in I: D_v\cdot \mathcal{R}<0\}, \quad J_+:=\{v\in I: D_v\cdot \mathcal{R}>0\}. 
\]

Also for any $J\subset I\backslash I_\emptyset$, set 
\[
\sigma_J:= \cone(\{v: v\in J\})\subset \mathbb{X}_\mathbf{R}. 
\]

Here are the facts we need from (\cite{CLS} Theorem 15.3.13). 
\begin{itemize}
\item[a)] Both $J_+$ and $J_-$ have at least $2$ elements.
\item[b)] Vectors in $J_-$ and $J_+$  form an oriented circuit. That means there is one linear relationship
\[
\sum_{i\in J_-}b_i v_i+\sum_{i\in J_+}b_i v_i=0,
\]

where $b_i>0$, if $i\in J_+$; $b_i<0$, if $i\in J_-$. And every proper subset is linearly independent. We normalize it so that $\sum_{i\in J_+} b_i=-\sum_{i\in J_-}b_i=1$. 
\item[c)] $\sigma_{J_-}\in \Sigma_{\mathscr{T}_1}$, $\sigma_{J_+}\in \Sigma_{\mathscr{T}_2}$, and $\sigma_{J_-\cup J_+}\in \Sigma_{\mathscr{P}}$. 
\item[d)] All the non-simplicial cones of $\Sigma_{\mathscr{P}}$ are contained in Star$(\sigma_{J_-\cup J_+})$. And
\[
\Sigma_{\mathscr{T}_1}\backslash \text{Star}(\sigma_{J_-})=\Sigma_{\mathscr{P}}\backslash \text{Star}(\sigma_{J_-\cup J_+})=\Sigma_{\mathscr{T}_2}\backslash \text{Star}(\sigma_{J_+}).
\]
\item[e)] For any maximal non-simplicial cell $\sigma_\alpha\in \Sigma_\mathscr{P}$, there is a set $J_\alpha\subset I\backslash I_\emptyset$, such that $J_\alpha\cup J_-\cup J_+$ is the set of vertices of $\sigma_\alpha$, and $|J_\alpha\cup J_-\cup J_+|=g+2$. Denote $\Sigma_{\mathscr{T}_1}|_{\sigma_\alpha}$ by $\Sigma_-$, and $\Sigma_{\mathscr{T}_2}|_{\sigma_\alpha}$ by $\Sigma_+$, then we have 
\[
\Sigma_-=\{\sigma_J: J\subset J_\alpha\cup J_+\cup J_-, J_+\nsubseteq J\}, \quad \Sigma_+=\{\sigma_J: J\subset J_\alpha\cup J_+\cup J_-, J_-\nsubseteq J\}. 
\]
\end{itemize}

Set
\[
\omega=\sum_{i\in J_+}b_i v_i=\sum_{i\in J_-}-b_i v_i.
\]

For each maximal non-simplicial cell $\sigma_\alpha$, let $J^\alpha=J_\alpha\cup J_-\cup J_+$. For any pair $i\in J_-, j\in J_+$, Define $\sigma_{ij}=\sigma_{J^\alpha\backslash\{i\}}\cap \sigma_{J^\alpha\backslash\{j\}}$. Since vectors in $J_+$ and $J_-$ form a circuit
\[
\sigma_{ij}=\cone(\{\omega, v: v\in J^\alpha\backslash\{i,j\}\})
\]

and is a simplex. 

For any such $\sigma_{ij}$, define $\psi_{ij}$ to be the affine function over $\overline{\sigma_{ij}}$ that is $1$ on $\omega$ and is $0$ on other vertices. Consider
\begin{align*}
g^{12}&:=g_{\Psi,\mathscr{T}_1}-g_{\Psi,\mathscr{T}_2}\\
&=\left\{
\begin{array}{rl}
0 & \text{if } x\in |\Sigma_\mathscr{P}\backslash \text{Star}(\sigma_{J_-\cup J_+})|\\
\psi_{ij}(x)q_\tau &\text{if } x\in\sigma_{ij}
\end{array} \right. ,
\end{align*}

for some $q_\tau\in\mathbb{L}$.

Compute $q_\tau$,
\begin{align*}
q_\tau&=g^{12}(\omega)\\
&=g_{\Psi,\mathscr{T}_1}(\omega)-g_{\Psi,\mathscr{T}_2}(\omega)\\
&=\sum_{i\in J_-}(-b_i)\Psi(v_i)-\sum_{i\in J_+}b_i\Psi(v_i)
\end{align*}

Therefore, for any $\psi'\in \mathbf{R}^I$, 
\begin{equation}
q_\tau(\psi')=\sum_{i\in J_-}(-b_i)\psi'(v_i)-\sum_{i\in J_+}b_i\psi'(v_i)
\end{equation}

Since $J_+$ has at least two elements, pick $v_k, v_l$ from $J_+$. $\sigma_k:=\sigma_{J\backslash\{k\}}$ and $\sigma_l:=\sigma_{J\backslash\{l\}}$ are two maximal cones in $\Sigma_-$. $\varsigma:=\sigma_{J\backslash\{k,l\}}$ is a wall between them. By (\cite{CLS} Proposition 6.4.4.)
\[
V(C(\varsigma))(\psi')=\frac{\mult(\varsigma)}{\mult(\sigma_k)(-b_k)}\Big(\sum_{i\in J_-}(-b_i)\psi'(v_i)-\sum_{i\in J_+}b_i\psi'(v_i)\Big)
\]

Therefore 
\[
q_\tau=\frac{\mult(\sigma_k)(-b_k)}{\mult(\varsigma)}[V(C(\varsigma))]=\frac{\mult(\sigma_k)(-b_k)}{\mult(\varsigma)}p_\varsigma.
\]

Since $[V(C(\varsigma))]$ is a curve class in $\mathcal{R}$, $q_\tau$ is a curve class in $\mathcal{R}$. It follows that $q_\tau\in C(\mathscr{T}_1)^\vee$, and $-q_\tau\in C(\mathscr{T}_2)^\vee$. Again $q_\tau\in (S_\tau^*)_\mathbf{Q}$. 

\begin{proposition}\label{adjacent}
If $\mathscr{T}_1$ and $\mathscr{T}_2$ are coherent triangulations such that $C(\mathscr{T}_1)$ and $C(\mathscr{T}_2)$ are of maximal dimension, and $\tau=C(\mathscr{T}_1)\cap C(\mathscr{T}_2)$ is a codimension-$1$ wall. Then $g^{12}=g_{\Psi,\mathscr{T}_1}-g_{\Psi,\mathscr{T}_2}$ takes values in $(S^*_\tau)_{\mathbf{Q}}$.
\end{proposition}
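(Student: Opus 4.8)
The plan is to read off the statement from the two explicit wall-crossing computations carried out just above, once we know that these exhaust all possibilities. The hypotheses — that $C(\mathscr{T}_1)$ and $C(\mathscr{T}_2)$ are maximal cones of $\Sigma(Q)$ meeting along a codimension-$1$ face $\tau$ — are exactly what put us in the VGIT wall-crossing situation of (\cite{CLS} Chapter 15.3). So I would first invoke (\cite{CLS} Theorem 15.3.6, Theorem 15.3.13) to conclude that crossing $\tau$ is either a divisorial contraction — one of $\mathscr{T}_1,\mathscr{T}_2$ is a star subdivision of the other at some $\omega\in I$ — or a flip $\mathcal{Y}_{\mathscr{T}_1}\dashrightarrow\mathcal{Y}_{\mathscr{T}_2}$ through the paving $\mathscr{P}$ attached to $\tau$; there are no other cases, so it suffices to treat these two.

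In the divisorial case I would quote the normal form for $g^{12}$ found above: it vanishes outside $\text{Star}(\sigma_0)$ and equals $\psi_i(x)\,q_\tau$ on each $\sigma_i$ with $\omega\in\overline{\sigma}_i$, where $\psi_i$ is the affine coordinate on $\overline{\sigma}_i$ equal to $1$ at $\omega$ and $0$ at the remaining vertices, so $\psi_i$ takes values in $[0,1]$; moreover the identity $q_\tau(\psi')=\psi'(\omega)-\sum_i a_i\psi'(v_i)$ gives $-q_\tau\in C(\mathscr{T}_1)^\vee$ and $q_\tau\in C(\mathscr{T}_2)^\vee$. In the flipping case I would quote the parallel normal form: $g^{12}$ vanishes on $|\Sigma_\mathscr{P}\backslash\text{Star}(\sigma_{J_-\cup J_+})|$ and equals $\psi_{ij}(x)\,q_\tau$ on each simplex $\sigma_{ij}$, with $\psi_{ij}\in[0,1]$ the affine coordinate equal to $1$ at $\omega$ and $0$ at the other vertices of $\overline{\sigma_{ij}}$; and the identification $q_\tau=\tfrac{\mult(\sigma_k)(-b_k)}{\mult(\varsigma)}\,[V(C(\varsigma))]$ exhibits $q_\tau$ as a positive rational multiple of a curve class in $\mathcal{R}$, whence $q_\tau\in C(\mathscr{T}_1)^\vee$ and $-q_\tau\in C(\mathscr{T}_2)^\vee$.

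To conclude I would assemble two elementary remarks. First, since $C_\tau^\vee=C(\mathscr{T}_1)^\vee+C(\mathscr{T}_2)^\vee$, in either case both $q_\tau$ and $-q_\tau$ lie in $C_\tau^\vee$; hence $q_\tau$ lies in the lineality space of $C_\tau^\vee$, which is $(S^*_\tau)_\mathbf{Q}$, and in particular $(S^*_\tau)_\mathbf{Q}$ is a $\mathbf{Q}$-linear subspace. Second, the $\sigma_i$ (resp. $\sigma_{ij}$) are lattice simplices, so for rational $x\in Q$ the scalar $\psi_i(x)$ (resp. $\psi_{ij}(x)$) is a nonnegative rational. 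Therefore $g^{12}(x)=\psi_i(x)q_\tau$ (resp. $\psi_{ij}(x)q_\tau$) lies in $(S^*_\tau)_\mathbf{Q}$ for every rational $x\in Q$, which is the assertion; as $g^{12}$ is piecewise affine, it takes values in $(S^*_\tau)_\mathbf{R}$ on all of $Q$.

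I do not expect a real obstacle: the genuinely combinatorial work — producing the two normal forms and computing $q_\tau$ — is already done above, so what remains is the bookkeeping just indicated. The one point that deserves a sentence of care is that the displayed descriptions of $g^{12}$ are exhaustive and internally consistent: that on the cells where the formula is stated to be nonzero $g^{12}$ is indeed affine with the stated value at $\omega$, and that $g^{12}$ vanishes identically off $\text{Star}(\sigma_0)$ (resp. $\text{Star}(\sigma_{J_-\cup J_+})$) — both being immediate from the fact that $\mathscr{T}_1$ and $\mathscr{T}_2$ coincide away from those stars, i.e. from fact d) of (\cite{CLS} Theorem 15.3.13) and the corresponding part of (loc. cit. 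Theorem 15.3.6).
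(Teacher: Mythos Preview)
Your proposal is correct and mirrors the paper's approach exactly: the proposition is stated as a summary of the two preceding case computations (divisorial and flipping, via \cite{CLS} Chapter 15.3), each of which produces the normal form $g^{12}=\psi_{\bullet}(x)\,q_\tau$ and shows $\pm q_\tau\in C(\mathscr{T}_i)^\vee$, whence $q_\tau\in(S^*_\tau)_\mathbf{Q}$. Your only addition is making explicit the trivial last step --- that rational scalar multiples of $q_\tau$ remain in the $\mathbf{Q}$-linear space $(S^*_\tau)_\mathbf{Q}$ --- which the paper leaves implicit.
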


\begin{corollary}\label{invertible}
If $\mathscr{T}_1$ and $\mathscr{T}_2$ are coherent triangulations such that $C(\mathscr{T}_1)$ and $C(\mathscr{T}_2)$ are of maximal dimension. Let $\tau=C(\mathscr{T}_1)\cap C(\mathscr{T}_2)$ be the common face. Then $g^{12}=g_{\Psi,\mathscr{T}_1}-g_{\Psi,\mathscr{T}_2}$ takes values in $(S^*_\tau)_{\mathbf{Q}}$.
\end{corollary}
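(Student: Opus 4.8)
The plan is to reduce Corollary~\ref{invertible} to the codimension-one case treated in Proposition~\ref{adjacent}, by connecting $C(\mathscr{T}_1)$ and $C(\mathscr{T}_2)$ through a chain of maximal cones of the secondary fan that all contain the common face $\tau$.

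First I would recall that, by Proposition~\ref{complete fan} and Definition~\ref{definition of the secondary fan}, the cones $C(\mathscr{T})$ attached to the coherent triangulations $\mathscr{T}$ of $Q$ are exactly the maximal cones of the complete fan $\Sigma(Q)$ in $\mathbb{L}^*_\mathbf{R}$. If $\tau$ is itself maximal then $C(\mathscr{T}_1)=\tau=C(\mathscr{T}_2)$, so $\mathscr{T}_1=\mathscr{T}_2$ and $g^{12}=0$; assume then that $\tau$ is not maximal. The star of $\tau$ --- the cones of $\Sigma(Q)$ containing $\tau$, viewed in $\mathbb{L}^*_\mathbf{R}/\mathrm{span}(\tau)$ --- is again a complete fan of positive dimension, hence connected in codimension one. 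So there is a sequence of coherent triangulations $\mathscr{T}_1=\mathscr{S}_0,\mathscr{S}_1,\dots,\mathscr{S}_m=\mathscr{T}_2$ with each $C(\mathscr{S}_k)$ maximal, $\tau\subseteq C(\mathscr{S}_k)$ for all $k$, and $\tau_k:=C(\mathscr{S}_{k-1})\cap C(\mathscr{S}_k)$ a codimension-one wall of $\Sigma(Q)$ that still contains $\tau$. Telescoping gives
\[
g^{12}=g_{\Psi,\mathscr{T}_1}-g_{\Psi,\mathscr{T}_2}=\sum_{k=1}^m\bigl(g_{\Psi,\mathscr{S}_{k-1}}-g_{\Psi,\mathscr{S}_k}\bigr),
\]
and by Proposition~\ref{adjacent} the $k$-th summand takes values in $(S^*_{\tau_k})_{\mathbf{Q}}$.

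To finish I would unwind the notation: for a cone $\sigma$ the space $(S^*_\sigma)_{\mathbf{Q}}$ is the $\mathbf{Q}$-linear span of the group of units of $\sigma^\vee$, i.e. $(\mathrm{span}\,\sigma)^{\perp}\otimes\mathbf{Q}\subset\mathbb{L}_{\mathbf{Q}}$, which is inclusion-reversing in $\sigma$; since $\tau\subseteq\tau_k$ this yields $(S^*_{\tau_k})_{\mathbf{Q}}\subseteq(S^*_\tau)_{\mathbf{Q}}$, so each summand --- and hence $g^{12}$ --- takes values in $(S^*_\tau)_{\mathbf{Q}}$. The bookkeeping with dual cones and the telescoping identity are routine; the point that requires care, and the main obstacle, is producing the chain so that every intermediate wall $\tau_k$ contains $\tau$, since that is exactly what makes Proposition~\ref{adjacent} applicable to adjacent pairs and makes the inclusion of the $(S^*)$-spaces run in the right direction. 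This is precisely connectedness in codimension one of the star of $\tau$ in $\Sigma(Q)$; if one prefers a concrete model, $\tau$ corresponds to a face of the secondary polytope $P(Q)$ and the walls through $\tau$ to the facets of that face, which visibly form a connected complex.
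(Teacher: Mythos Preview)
Your argument is correct and follows essentially the same route as the paper's own proof: connect $C(\mathscr{T}_1)$ and $C(\mathscr{T}_2)$ by a chain of adjacent maximal cones all containing $\tau$, telescope $g^{12}$, and apply Proposition~\ref{adjacent} to each step. The paper establishes the existence of the chain via the secondary polytope (your ``concrete model'' at the end), whereas you phrase it as connectedness in codimension one of the star of $\tau$; and your final inclusion $(S^*_{\tau_k})_{\mathbf{Q}}\subseteq (S^*_{\tau})_{\mathbf{Q}}$ from $\tau\subseteq\tau_k$ is a cleaner formulation of the paper's string of containments $(\sum_i S_{C_i})_{\mathbf{Q}}=(S_\tau)_{\mathbf{Q}}$, but the content is the same.
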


\begin{proof}
First we claim that $C(\mathscr{T}_1)$ and $C(\mathscr{T}_2)$ can be connected by a series of adjacent maximal cones $\{C_i\}_{0\leqslant i\leqslant l}$ such that $C_0=C(\mathscr{T}_1)$, $C_l=C(\mathscr{T}_2)$, and $\tau\subset C_i$ for all $i$. For the proof, look at the secondary polytope $P(Q)\subset\mathbb{L}_\mathbf{R}$. $\tau$ corresponds to a face $F_\tau$ of $X$ which is itself a polytope. $C(\mathscr{T}_1)$ and $C(\mathscr{T}_2)$ correspond to two vertices $v_1$ and $v_2$ of $F_\tau$. And they can be connected by edges of $F_\tau$. The vertices on these edges correspond to $C_i$ we are seeking. This proves the claim.

Define $g^{i,i-1}$ for $C_{i-1}$ and $C_i$ as in the proposition \ref{adjacent}. Then $g^{12}=g_{\Psi,\mathscr{T}_1}-g_{\Psi,\mathscr{T}_2}=\sum_{i=1}^lg^{i,i-1}$, and $(\sum_{i=0}^l S_{C_i})_\mathbf{Q}\subset (S_\tau)_\mathbf{Q}=(S_{C_0}+S_{C_l})_\mathbf{Q} \subset (\sum_{i=0}^l S_{C_i})_\mathbf{Q}$. It follows that $g^{12}$ takes values in $(S_\tau^*)_\mathbf{Q}$. 
\end{proof}

Let $\mathscr{T}_1$ and $\mathscr{T}_2$ be two coherent triangulations such that $C(\mathscr{T}_1)$ and $C(\mathscr{T}_2)$ are maximal cones. Let $\tau=C(\mathscr{T}_1)\cap C(\mathscr{T}_2)$. Define $\mathbb{L}_\tau$ to be the lattice generated by $S_{C(\mathscr{T}_1)}^{\gp}$ and $S_{C(\mathscr{T}_2)}^{\gp}$. Since $S_{C(\mathscr{T}_1)}^{\gp}$ and $S_{C(\mathscr{T}_2)}^{\gp}$ are commensurable, $\
\mathbb{L}_\tau$ is commensurable to both of them. Define $S_\tau=(C(\mathscr{T}_1)^\vee+C(\mathscr{T}_2)^\vee)\cap \mathbb{L}_\tau$.  Let $U_\tau:=\spec \mathbf{Z}[S_\tau]$. The inclusions of monoids  $S_{C(\mathscr{T}_i)}\to \tau^\vee\cap S_{C(\mathscr{T}_i)}^{\gp}\to S_\tau$ define morphisms $p_{\tau,\mathscr{T}_i}: U_\tau\to U_{C(\mathscr{T}_i)}$. 

\begin{lemma}\label{lattices agree for GKZ}
The morphisms $p_{\tau,\mathscr{T}_i}: U_\tau \to U_{C(\mathscr{T}_i)}$ are both \'{e}tale for $i=1,2$. 
\end{lemma}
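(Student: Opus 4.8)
The statement is an étale-ness assertion about the two natural maps $p_{\tau,\mathscr{T}_i}\colon U_\tau\to U_{C(\mathscr{T}_i)}$, where $U_\tau=\spec\mathbf{Z}[S_\tau]$, $U_{C(\mathscr{T}_i)}=\spec\mathbf{Z}[S_{C(\mathscr{T}_i)}]$, and $S_\tau=(C(\mathscr{T}_1)^\vee+C(\mathscr{T}_2)^\vee)\cap\mathbb{L}_\tau$. Since both are toric morphisms coming from inclusions of monoids of the same rank, the plan is to reduce to the criterion that a toric morphism $\spec\mathbf{Z}[S']\to\spec\mathbf{Z}[S]$ induced by $S\hookrightarrow S'$ is étale precisely when it is an open immersion on the level of cones together with a localization / commensurability statement on lattices — more precisely, when $S'=\tau_i^\vee\cap S'^{\gp}$ for the relevant face and the induced lattice map $S^{\gp}\to S'^{\gp}$ is such that $\mathbf{Z}[S']$ is obtained from $\mathbf{Z}[S]$ by inverting finitely many monomials (a localization), possibly after the finite base extension controlled by $\mathbb{L}_\tau/\mathbb{L}_{\mathscr{T}_i}$.

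First I would fix $i$, say $i=1$, and unwind the monoid maps. The map $S_{C(\mathscr{T}_1)}\to S_\tau$ factors as $S_{C(\mathscr{T}_1)}\hookrightarrow \tau^\vee\cap S_{C(\mathscr{T}_1)}^{\gp}\hookrightarrow S_\tau$. The first inclusion is a localization: $\tau$ is a face of $C(\mathscr{T}_1)$, so $\tau^\vee\cap S_{C(\mathscr{T}_1)}^{\gp}=S_{C(\mathscr{T}_1)}+\mathbf{Z}_{\geqslant 0}(-m)$ for a suitable lattice point $m$ cutting out $\tau$ (standard toric localization along a face; cf. \cite{CLS} Proposition 1.3.16), hence induces an open immersion of toric varieties. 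The second inclusion $\tau^\vee\cap S_{C(\mathscr{T}_1)}^{\gp}\hookrightarrow S_\tau$ changes the ambient lattice from $S_{C(\mathscr{T}_1)}^{\gp}=\mathbb{L}_{\mathscr{T}_1}$ to $\mathbb{L}_\tau$, which by construction is commensurable to (and contains) $\mathbb{L}_{\mathscr{T}_1}$; on cones it is the identity cone $\tau^\vee$. So I need that the corresponding toric morphism $\spec\mathbf{Z}[\tau^\vee\cap\mathbb{L}_\tau]\to\spec\mathbf{Z}[\tau^\vee\cap\mathbb{L}_{\mathscr{T}_1}]$ is étale. This is where the regularity hypothesis on $Q$ (that $Q$ contains a full-dimensional regular simplex, ensuring $\mathbb{L}^*$ is free and the central fibers are reduced, as in the Remark after the previous lemma) enters: I would show that $\tau$, viewed inside the lattice $\mathbb{L}_{\mathscr{T}_1}$, is a \emph{smooth} (regular) cone — because $\mathbb{L}_{\mathscr{T}_1}=\pic(\mathcal{Y}_{\mathscr{T}_1})\times\mathbf{Z}^{I_\emptyset}$ and $C(\mathscr{T}_1)=\nef(\mathcal{Y}_{\mathscr{T}_1})\times\mathbf{R}_{\geqslant0}^{I_\emptyset}$ with $\mathcal{Y}_{\mathscr{T}_1}$ a $\mathbf{Q}$-factorial (orbifold) relative minimal model, so $\NS$ is a lattice and the nef cone is the closure of a GIT chamber, whose walls are of the simplest type from the VGIT analysis of \cite{CLS} Chapter 15 — and that therefore $\tau$ (a codimension-$1$ face or, after Corollary~\ref{invertible}, an arbitrary common face) is a regular cone in \emph{both} lattice structures, forcing $\tau^\vee\cap\mathbb{L}_\tau=\tau^\vee\cap\mathbb{L}_{\mathscr{T}_1}$ after inverting the finitely many units coming from the torsion group $\mathbb{L}_\tau/\mathbb{L}_{\mathscr{T}_1}$, hence the map is (a composition of a localization and a finite étale map of the torus factors) étale.

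Concretely, the key steps in order are: (1) factor $p_{\tau,\mathscr{T}_1}$ through $\spec\mathbf{Z}[\tau^\vee\cap\mathbb{L}_{\mathscr{T}_1}]$ and observe this first map is an open immersion (toric localization at the face $\tau\subset C(\mathscr{T}_1)$); (2) analyze the residual map of lattices $\mathbb{L}_{\mathscr{T}_1}\hookrightarrow\mathbb{L}_\tau$ relative to the cone $\tau^\vee$, using Proposition~\ref{adjacent}/Corollary~\ref{invertible} to see that $g^{12}=g_{\Psi,\mathscr{T}_1}-g_{\Psi,\mathscr{T}_2}$ takes values in $(S_\tau^*)_\mathbf{Q}$, which pins down how $\mathbb{L}_\tau$ sits between $\mathbb{L}_{\mathscr{T}_1}$ and $\mathbb{L}_{\mathscr{T}_2}$; (3) invoke the smoothness of the cones $\tau$ in each lattice (from the orbifold/$\mathbf{Q}$-factorial structure of the relative minimal models and the explicit wall descriptions of \cite{CLS} 15.3) to conclude $S_\tau$ and $\tau^\vee\cap\mathbb{L}_{\mathscr{T}_1}$ agree up to the finite group, so that the residual map is the quotient by an étale (constant, since $m_\mathscr{T}$ is invertible in $k$) group scheme, hence étale; (4) combine, and repeat for $i=2$ by symmetry.

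The main obstacle will be step (3): carefully controlling the interaction between the two competing integral structures $\mathbb{L}_{\mathscr{T}_1}$ and $\mathbb{L}_{\mathscr{T}_2}$ on $\mathbb{L}_\mathbf{R}$ near the wall $\tau$, and verifying that the relevant cone is genuinely regular for the lattice $\mathbb{L}_\tau$ (not merely simplicial) — this is exactly what makes the transition map a localization times a finite étale cover rather than something merely flat or quasi-finite. I expect this to come down to an explicit computation in the two cases of \cite{CLS} Chapter 15.3 (the divisorial case, where one triangulation is a star subdivision of the other at a single point $\omega\in I$, and the flipping case, where $J_+,J_-$ form an oriented circuit); in each, the local picture of $\tau^\vee$ and the two lattices is completely explicit, so the regularity can be checked directly, with the bending-parameter computations already carried out above (the formulas for $q_\tau$) providing the needed coordinates.
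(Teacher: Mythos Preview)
Your factorization in step~(1) is fine and matches the paper: the map $S_{C(\mathscr{T}_1)}\hookrightarrow \tau^\vee\cap\mathbb{L}_{\mathscr{T}_1}$ is a monoid localization along a face, hence induces an open immersion of affine toric varieties. The issue is step~(3).

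The claim that $\tau$ is a \emph{regular} cone in either lattice $\mathbb{L}^*_{\mathscr{T}_i}$ is not justified, and in fact there is no reason for it to hold. The face $\tau$ corresponds to a (possibly non-simplicial) paving $\mathscr{P}$ of $Q$, and $\tau=C(\mathscr{P},\mathbf{R})\times\mathbf{R}^{I}_{\geqslant 0}$ for $I=I_\emptyset^1\cap I_\emptyset^2$; the cone $C(\mathscr{P},\mathbf{R})$ of convex $\mathscr{P}$-piecewise affine functions modulo affine is in general a complicated polyhedral cone whose primitive ray generators need not be part of a basis of $\pic(\mathcal{Y}_{\mathscr{T}_i})$. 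The wall-crossing analysis in \cite{CLS}~15.3 you invoke constrains how a single codimension-$1$ wall looks, but says nothing about the regularity of a general face $\tau$, and connecting $C(\mathscr{T}_1)$ to $C(\mathscr{T}_2)$ through adjacent chambers (as in Corollary~\ref{invertible}) controls $g^{12}$, not the combinatorics of $\tau$ itself. So step~(3) as written does not go through.

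What the paper actually does is bypass regularity entirely. The key observation is that the lattice $N_\tau:=PA(\mathscr{P},\mathbf{Z})/Aff=\pic(\mathcal{Y}_\mathscr{P})$ satisfies $N_\tau=N_{\tau,\mathbf{R}}\cap\mathbb{L}_i^*$ for \emph{both} $i$: a function affine on each top cell of $\mathscr{P}$ and integral on each top cell of $\mathscr{T}_i$ is automatically integral on the cells of $\mathscr{P}$ (because the $\mathscr{T}_i$-cells are full-dimensional), and conversely. Dualizing, this says that the sharp quotient $(\tau^\vee\cap\mathbb{L}_i)/(S_\tau^*\cap\mathbb{L}_i)\cong H_\mathscr{P}^{\sat}\times(\mathbf{N}^I)^*$ is independent of $i$. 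One then writes the commutative diagram
\[
\begin{diagram}
0&\rTo&(S_\tau^*)_\mathbf{Q}\cap\mathbb{L}_i&\rTo&\tau^\vee\cap\mathbb{L}_i&\rTo&H_\mathscr{P}^{\sat}\times(\mathbf{N}^I)^*&\rTo&0\\
&&\dTo&&\dTo&&\dTo^{=}&&\\
0&\rTo&(S_\tau^*)_\mathbf{Q}\cap\mathbb{L}_\tau&\rTo&S_\tau&\rTo&H_\mathscr{P}^{\sat}\times(\mathbf{N}^I)^*&\rTo&0
\end{diagram}
\]
and observes (via a pushout criterion for integral monoids) that the left square is a pushout; hence $\spec k[S_\tau]\to\spec k[\tau^\vee\cap\mathbb{L}_i]$ is a pullback of the finite \'etale torus map $\spec k[(S_\tau^*)_\mathbf{Q}\cap\mathbb{L}_\tau]\to\spec k[(S_\tau^*)_\mathbf{Q}\cap\mathbb{L}_i]$, and is therefore \'etale. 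No regularity of $\tau$ is needed; what matters is that the two lattices $\mathbb{L}_1,\mathbb{L}_2$ become equal after passing to the quotient by the unit directions of $\tau^\vee$, and that follows from the intrinsic description of $N_\tau$ in terms of the paving $\mathscr{P}$.
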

\begin{proof}
Denote the lattice $\mathbb{L}_{\mathscr{T}_i}$ by $\mathbb{L}_i$. This is the lattice used to define $S_{C(\mathscr{T}_i)}$. We only need to show that the morphism $U_\tau\to \spec k[ \tau^\vee\cap \mathbb{L}_i]$ is \'{e}tale. The cone $\tau$ is associated to the paving $\mathscr{P}$. Let $N_\tau$ be the lattice $PA(\mathscr{P},\mathbf{Z})/Aff=\pic(\mathcal{Y}_\mathscr{P})$. We claim that $N_\tau=N_{\tau,\mathbf{R}}\cap \mathbb{L}^*_i$ for $i=1,2$. Notice that $\mathscr{P}$ is coarser than both $\mathscr{T}_i$. Assume that $\{\sigma_{jk}\}$ is a collection of top-dimensional cells in $\mathscr{T}_i$, and for each $k$, the union of the closure of $\sigma_{jk}$ is the closure of a top-dimensional cell $\sigma_k$ in $\mathscr{P}$. Then a piecewise affine function $\psi$ in $N_\tau$ just means it is affine on each $\sigma_k$ and are integral on each of top-dimensional cells. Therefore it is integral on each $\sigma_{jk}$, and it is in $N_{\tau,\mathbf{R}}\cap \mathbb{L}_i^*$. On the other hand, if $\psi\in N_{\tau,\mathbf{R}}\cap \mathbb{L}_i^*$, then it is affine on each $\sigma_k$ and integral on each $\sigma_{jk}$. Since $\sigma_{jk}$ is of top dimension, $\psi$ is integral on $\sigma_{k}$ by our definition of integrality. The claim is proved. Let  $I$ be the subset of vertices that are not in either of $\mathscr{T}_i$, i.e. $I:=I_\emptyset^1\cap I_\emptyset^2$. Recall the cone $\tau=C(\mathscr{P},\mathbf{R})\times \mathbf{R}^{I}_{\geqslant 0}\times \{0\}$.  As a result, we have the exact sequence,
\[
\begin{diagram}
0&\rTo&(S^*_\tau)_\mathbf{Q}\cap S_{C(\mathscr{T}_i)}^{\gp}&\rTo&S_{C(\mathscr{T}_i)}^{\gp}&\rTo& H_\mathscr{P}^{\gp}\times (\mathbf{Z}^I)^*&\rTo&0.
\end{diagram}
\]

The image of the quotient $H_\mathscr{P}^{\gp}\times (\mathbf{Z}^I)^*$ is independent of $i$. Intersecting $\tau^\vee$, we have 
\begin{equation}\label{independence}
\begin{diagram}
0&\rTo&(S^*_\tau)_\mathbf{Q}\cap \mathbb{L}_i&\rTo&\mathbb{L}_i\cap \tau^\vee &\rTo&H_\mathscr{P}^\sat\times (\mathbf{N}^I)^*&\rTo&0\\
&&\dTo^{f^\flat}&&\dTo&&\dTo^{=}&&\\
0&\rTo&(S^*_\tau)_\mathbf{Q}\cap \mathbb{L}_\tau&\rTo&S_\tau &\rTo&H_\mathscr{P}^\sat\times(\mathbf{N}^I)^*&\rTo&0
\end{diagram},
\end{equation}

where $f^\flat$ is an injection with image of finite index. By (\cite{ogus} Chap.\Rmnum{1} Proposition 1.1.4 part 2), the first square is a push--out diagram. Therefore, we get a pul--back diagram
\begin{diagram}
U_{\tau}&\rTo^{g'}& \spec k[(S_\tau^*)_\mathbf{Q}\cap \mathbb{L}_i]\\
\dTo^{p_{\tau,\mathscr{T}_i}}&&\dTo_{f}\\
\spec k[\tau^\vee\cap \mathbb{L}_i]&\rTo^{g} &\spec k[(S_\tau^*)_\mathbf{Q}\cap\mathbb{L}_\tau],
\end{diagram}

where $g$ and $g'$ are fibrations with fibers $\spec k[H_\mathscr{P}^\sat\times(\mathbf{N}^I)^*]$. Since $f$ is \'{e}tale, $p_{\tau,\mathscr{T}_i}$ is \'{e}tale. 
\end{proof}

\begin{corollary}\label{GKZ log}
Let $F_i$ be the face of $H_{\mathscr{T}_i}^\sat\cap \tau^\perp$, and $H_{\mathscr{T}_i,F_i}^\sat$ be the localization with respect to $F_i$. The following diagram commutes .
\begin{diagram}
C(\mathscr{T}_i)^\vee\cap \mathbb{L}_i&\rTo &\tau^\vee \cap \mathbb{L}_\tau\\
\uTo &&\uTo\\
H_{\mathscr{T}_i}^\sat&\rTo & H_{\mathscr{P}}^\sat\oplus F_i^{\gp},
\end{diagram}

where the bottom line is the localization. 
\end{corollary}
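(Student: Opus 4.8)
The plan is to read the statement off from Lemma~\ref{lattices agree for GKZ}: I would intersect its exact sequences \eqref{independence} with the line $\tau^\perp$ and with the subcone $C(\mathscr{T}_i)^\vee\subset\tau^\vee$, and then check that the resulting maps are the ones in the diagram. Since $\tau$ is a facet of the full-dimensional strongly convex cone $C(\mathscr{T}_i)$, the dual face $\tau^*:=C(\mathscr{T}_i)^\vee\cap\tau^\perp$ is a single ray; by the computation of $q_\tau$ just before the statement, $\tau^*=\mathbf{R}_{\geqslant 0}(\pm q_\tau)$, and two situations occur. In the flipping case, and in the divisorial case for the finer triangulation $\mathscr{T}_1$, $\tau^*$ lies in the $\overline{\effcurve}(\mathcal{Y}_{\mathscr{T}_i})$-summand of $C(\mathscr{T}_i)^\vee=\overline{\effcurve}(\mathcal{Y}_{\mathscr{T}_i})\times(\mathbf{R}_{\geqslant 0}^{I_\emptyset^i})^*$ (it is the ray of the contracted curve class $p_\varsigma$ in the flipping case, and of the curve contracted by $\mathcal{Y}_{\mathscr{T}_1}\to\mathcal{Y}_{\mathscr{T}_2}$ in the divisorial case), so $F_i=H_{\mathscr{T}_i}^\sat\cap\tau^\perp\cong\mathbf{N}$; since $H_{\mathscr{T}_i}^\sat$ is saturated in $\pic(\mathcal{Y}_{\mathscr{T}_i})^*$, the face $F_i$ is saturated in $\tau^\perp\cap\mathbb{L}_i$, hence $F_i^{\gp}=\tau^\perp\cap\mathbb{L}_i=(S^*_\tau)_\mathbf{Q}\cap\mathbb{L}_i$ is exactly the kernel of the quotient map in \eqref{independence}, and moreover $I=I_\emptyset^1\cap I_\emptyset^2=I_\emptyset^i$. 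In the divisorial case for the coarser triangulation $\mathscr{T}_2$, instead $q_\tau$ is the $\omega$-coordinate vector of $(\mathbf{N}^{I_\emptyset^i})^*$, so $F_i=\{0\}$; here the bottom arrow of the diagram is the identity and one only has to see $H_{\mathscr{T}_i}^\sat=H_\mathscr{P}^\sat$, which again follows from \eqref{independence} by localizing $S_{C(\mathscr{T}_i)}$ at the rank-one face $\{0\}\times(\mathbf{N}^{\{\omega\}})^*$ and comparing the two descriptions of the quotient $(\mathbb{L}_i\cap\tau^\vee)/(\tau^\perp\cap\mathbb{L}_i)$.

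So I may assume $F_i\cong\mathbf{N}$, and the key point is the identification $H_{\mathscr{T}_i,F_i}^\sat\cong H_\mathscr{P}^\sat\oplus F_i^{\gp}$ that gives the lower-right corner its meaning. On the level of cones, $\tau^*=\mathbf{R}_{\geqslant 0}F_i$ is the face of $C(\mathscr{T}_i)^\vee$ dual to the facet $\tau$, so localizing $C(\mathscr{T}_i)^\vee$ at it produces $\tau^\vee$; since the monoid $S_{C(\mathscr{T}_i)}=C(\mathscr{T}_i)^\vee\cap\mathbb{L}_i$ is fine and saturated, localizing it at $F_i$ produces $\mathbb{L}_i\cap\tau^\vee$, and after stripping the common free factor $(\mathbf{N}^{I_\emptyset^i})^*$ (which appears identically on both sides of \eqref{independence}) this exhibits $H_{\mathscr{T}_i,F_i}^\sat$ as an extension of the sharp monoid $H_\mathscr{P}^\sat$ by the free group $F_i^{\gp}$, the surjection onto $H_\mathscr{P}^\sat$ being the pushforward of curve classes along $\mathcal{Y}_{\mathscr{T}_i}\to\mathcal{Y}_\mathscr{P}$. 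That this surjection hits all of $H_\mathscr{P}^\sat$ and not just a finite-index submonoid is precisely the content of the identity $N_\tau=N_{\tau,\mathbf{R}}\cap\mathbb{L}^*_i$ of Lemma~\ref{lattices agree for GKZ}, dualized; I expect this compatibility of the two integral structures to be the only non-formal step. Because $F_i^{\gp}$ is free, the group extension $0\to F_i^{\gp}\to(H_{\mathscr{T}_i,F_i}^\sat)^{\gp}\to(H_\mathscr{P}^\sat)^{\gp}\to0$ splits; and since $\tau^\vee$ is stable under translation by its lineality space $\tau^\perp$, any group-theoretic section automatically carries $H_\mathscr{P}^\sat$ back into $H_{\mathscr{T}_i,F_i}^\sat$, so the monoid extension splits and the isomorphism follows.

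It remains to see that the square commutes, which is now formal. The left arrow is the inclusion $H_{\mathscr{T}_i}^\sat\hookrightarrow H_{\mathscr{T}_i}^\sat\times(\mathbf{N}^{I_\emptyset^i})^*=C(\mathscr{T}_i)^\vee\cap\mathbb{L}_i$ of the first factor, the top arrow is $p_{\tau,\mathscr{T}_i}^\flat$ (the inclusion $S_{C(\mathscr{T}_i)}\hookrightarrow S_\tau$), and the bottom arrow is the localization $H_{\mathscr{T}_i}^\sat\to H_{\mathscr{T}_i,F_i}^\sat$. The composite $H_{\mathscr{T}_i}^\sat\to C(\mathscr{T}_i)^\vee\cap\mathbb{L}_i\hookrightarrow S_\tau=\tau^\vee\cap\mathbb{L}_\tau$ carries $F_i$ into the subgroup of invertibles $S^*_\tau=\tau^\perp\cap\mathbb{L}_\tau$, so by the universal property of the localization it factors uniquely through $H_{\mathscr{T}_i,F_i}^\sat$; taking that factorization as the right vertical arrow makes the diagram commute by construction, and under the identification of the previous paragraph this arrow is the map assembled from the section $H_\mathscr{P}^\sat\hookrightarrow S_\tau$ coming from the lower row of \eqref{independence} and the inclusion $F_i^{\gp}\hookrightarrow S^*_\tau$, exactly the data used to glue the family over $U_{C(\mathscr{T}_i)}$ to the one over $U_\tau$. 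Finally the pushout statement of Lemma~\ref{lattices agree for GKZ} shows these choices do not depend on $i$, which is what the gluing needs.
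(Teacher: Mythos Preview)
Your overall strategy is the same as the paper's: extract the corollary from the exact sequences \eqref{independence} in Lemma~\ref{lattices agree for GKZ}, observe that localizing $S_{C(\mathscr{T}_i)}$ along the face cut out by $\tau^\perp$ produces $\mathbb{L}_i\cap\tau^\vee$, strip off the free $(\mathbf{N}^{\bullet})^*$ factors, and then split the resulting extension of $H_\mathscr{P}^\sat$ by $F_i^{\gp}$ because the monoids are toric and $F_i^{\gp}$ is free. That is exactly what the paper does, in two sentences.

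The gap is that you have assumed $\tau$ is a \emph{facet} of $C(\mathscr{T}_i)$. In the setting of the corollary, $\tau=C(\mathscr{T}_1)\cap C(\mathscr{T}_2)$ is the common face of two arbitrary maximal cones of $\Sigma(Q)$; it can have any codimension, and it is used that way downstream (Proposition~\ref{GKZlog} applies it for a general pair $\mathscr{T}_1,\mathscr{T}_2$). So your dichotomy into ``flipping'' versus ``divisorial'' and the conclusion $F_i\cong\mathbf{N}$ or $F_i=\{0\}$ are not available in general: $F_i$ can be a face of arbitrary rank, and the set $I_i=I_\emptyset^i\setminus I$ (the points lying in $\mathscr{T}_i$ but not in the paving $\mathscr{P}$ attached to $\tau$) need not be empty or a singleton. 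Concretely, the face of $S_{C(\mathscr{T}_i)}=H_{\mathscr{T}_i}^\sat\times(\mathbf{N}^{I_\emptyset^i})^*$ cut out by $\tau^\perp$ is $F_i\times(\mathbf{N}^{I_i})^*$, and localizing gives $H_{\mathscr{T}_i,F_i}^\sat\times(\mathbf{Z}^{I_i})^*\times(\mathbf{N}^{I})^*$; so one cannot simply ``strip the common free factor $(\mathbf{N}^{I_\emptyset^i})^*$'' --- part of it becomes a group and part survives as $(\mathbf{N}^I)^*$.

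The good news is that none of your actual arguments need codimension one. Your second and third paragraphs go through verbatim once you drop the explicit case analysis: $F_i^{\gp}$ is free because it is a subgroup of a finitely generated free abelian group, the cone localization $C(\mathscr{T}_i)^\vee\to\tau^\vee$ along the dual face holds in any codimension, and the surjectivity onto $H_\mathscr{P}^\sat$ still follows from $N_\tau=N_{\tau,\mathbf{R}}\cap\mathbb{L}_i^*$. The paper's proof is just the uniform version of what you wrote, without the unnecessary wall--crossing bookkeeping.
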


\begin{proof}
It is implied by the diagram \eqref{independence} in the above proof. We use the notations in the above proof. Let $I_i=I_\emptyset^i\backslash I$. Recall $C(\mathscr{T}_i)^\vee\cap \mathbb{L}_i=H_{\mathscr{T}_i}^\sat\times (\mathbf{N}^{I_\emptyset^i})^*$. The localization of $C(\mathscr{T}_i)^\vee\cap \mathbb{L}_i$ with respect to the face $\tau^\perp\cap \mathbb{L}_i$ is $H_{\mathscr{T}_i,F_i}^\sat\times (\mathbf{Z}^{I_i})^*\times (\mathbf{N}^I)^*$. Therefore, the top exact sequence in \eqref{independence} shows that $H_\mathscr{P}^\sat$ is the quotient of $H_{\mathscr{T}_i,F_i}^\sat$ by the face $F_i^{\gp}$. Since all the monoids are toric, we can choose splittings of the exact sequences. It is the commutative diagram we want. 
\end{proof}

Denote the group scheme in the \'{e}tale pre-equivalence relation for $U_{C(\mathscr{T}_i)}$ by $G_i$, and the group action by $\rho_i: G_i\times U_{C(\mathscr{T}_i)}\to U_{C(\mathscr{T}_i)}$ . Define $R_{ij}:=G_i\times U_\tau\times G_j$ for $i\neq j$. Define $G_i\times U_\tau\to U_{C(\mathscr{T}_i)}$ by $\rho_i\circ(\Id, p_{\tau,\mathscr{T}_i})$. Then composed with the projection $G_i\times U_\tau\times G_j\to G_i\times U_\tau$, we get a morphism $s: R_{ij}\to U_{C(\mathscr{T}_i)}$. Define $t: R_{ij}\to U_{C(\mathscr{T}_j)}$ similarly. By Lemma~\ref{lattices agree for GKZ}, both $s$ and $t$ are \'{e}tale. 

Denote the toric monoid $S_{C(\mathscr{T}_i)}$ by $P_i$ for $i=1,2$. We have defined $k[P_i ]$-algebras $R_i:=R_{g_{\Psi,\mathscr{T}_i}}$. The pull--back along $p_{\tau,\mathscr{T}_i}$ is $R_i\vert_{U_\tau}=k[S_\tau]\otimes_{k[P_i]} R_i$ is
\[
k[S_\tau]\otimes_{k[P_i]}R_i\cong k[S(Q)\rtimes_i S_\tau].
\]

The addition of $S(Q)\rtimes_i S_\tau$ is defined as
\[
(\alpha,p)+(\beta,q)=(\alpha+\beta, p+q+ng_{\Psi,\mathscr{T}_i}(\alpha)+mg_{\Psi,\mathscr{T}_i}(\beta)-(n+m)g_{\Psi,\mathscr{T}_i}(\gamma)).
\]
We use the subscript $\rtimes_i$ to distinguish different additions on the same underlying set. 

Consider $g^{12}=g_{\Psi,\mathscr{T}_1}-g_{\Psi,\mathscr{T}_2}$. Since $g_{\Psi,\mathscr{T}_i}$ are integral with respect to each integral structure, and $\mathbb{L}_\tau$ is a refinement of both the integral structure, $g^{12}$ is integral for $\mathbb{L}_\tau$. Therefore we can define a map
\begin{align*}
S(Q)\rtimes_1 S_\tau&\longrightarrow S(Q)\rtimes_2 S_\tau\\
(\alpha,p)&\longmapsto (\alpha, p+\deg(\alpha)g^{12}(\alpha)).
\end{align*}

It is a morphism between monoids and preserves the action of $S_\tau$. Moreover, by Corollary~\ref{invertible}, this is an isomorphism. Therefore, it induces an isomorphism between graded $k[S_\tau]$-algebras 
\[
\varphi_{\mathscr{T}_1\mathscr{T}_2}: k[S(Q)\rtimes_1S_\tau]\to k[S(Q)\rtimes_2S_\tau].
\]
 
Denote the pull back of family $\mathcal{X}_{\mathscr{T}_i}$ to $U_\tau$ by $\mathcal{X}_i$. It is defined as $\proj k[S(Q)\rtimes_iS_\tau]$. The isomorphism $\varphi_{\mathscr{T}_1\mathscr{T}_2}$ induces an isomorphism between $\mathcal{X}_i$'s.

The section $\vartheta$ for $U_{C(\mathscr{T})}$ is defined by $\sum_{\omega\in I}\mathrm{X}^{\Psi(\omega)-g_{\Psi,\mathscr{T}}(\omega)}\vartheta_\omega$, and is preserved by $\varphi_{\mathscr{T}_1\mathscr{T}_2}$. Furthermore, the $\mathbb{X}$-grading is preserved. In other words, the line bundles $\mathcal{L}_1$, $\mathcal{L}_2$, the sections $\Theta_1$, $\Theta_2$, the $\mathbb{T}$-action $\varrho_1$, $\varrho_2$ are all compatible under the isomorphism $\varphi_{\mathscr{T}_1\mathscr{T}_2}$. The only thing left is the log structure. 

\begin{proposition} \label{GKZlog}
The log structures on $U_{C(\mathscr{T}_i)}$ agree on $U_\tau$, and has a chart $H_\mathscr{P}$. The log structures induced from $P_i$ on $\mathcal{X}_i$ agree by the isomorphism $\varphi_{\mathscr{T}_1\mathscr{T}_2}$, and has a chart $S(Q)\rtimes H_\mathscr{P}$. For any geometric point $\overline{s}$ in the interior of the closed orbit corresponding to $\tau$, the above charts are good at $\overline{s}$.\end{proposition}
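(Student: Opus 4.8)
I would reduce the entire statement to the monoid‑level Corollary~\ref{GKZ log}, using the elementary fact that enlarging a chart of a log structure by a face — or by a direct summand — whose image consists of units does not alter the associated log structure. By construction the log structure on $U_{C(\mathscr{T}_i)}$ is pulled back from the standard family over $\spec k[H_{\mathscr{T}_i}^\sat]$, so it carries the global chart $H_{\mathscr{T}_i}\to k[S_{C(\mathscr{T}_i)}]$ with $H_{\mathscr{T}_i}$ the non‑saturated monoid (as in \cite{ols08}). Restricting along $p_{\tau,\mathscr{T}_i}$, the pullback to $U_\tau$ has the chart $\beta_i\colon H_{\mathscr{T}_i}\hookrightarrow H_{\mathscr{T}_i}^\sat\hookrightarrow S_{C(\mathscr{T}_i)}\xrightarrow{p_{\tau,\mathscr{T}_i}^\flat}S_\tau\to k[S_\tau]$. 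Corollary~\ref{GKZ log} shows that $H_{\mathscr{T}_i}^\sat\to S_\tau$ factors through the localization $H_{\mathscr{T}_i}^\sat\to H_{\mathscr{T}_i,F_i}^\sat=H_{\mathscr{P}}^\sat\oplus F_i^{\gp}$, and since $F_i\subset\tau^\perp$ the summand $F_i^{\gp}$ maps into $S_\tau^*=\tau^\perp\cap\mathbb{L}_\tau\subset k[S_\tau]^*$. The coarsening map from $\mathscr{T}_i$ to $\mathscr{P}$ induces a surjection $N_{\mathscr{T}_i}^{\gp}\twoheadrightarrow N_{\mathscr{P}}^{\gp}$ carrying the $\mathscr{T}_i$‑product $\alpha*\beta$ to the $\mathscr{P}$‑product, hence $H_{\mathscr{T}_i}\twoheadrightarrow H_{\mathscr{P}}$; I would rerun the argument of Corollary~\ref{GKZ log} at the non‑saturated level to obtain a factorization $\beta_i\colon H_{\mathscr{T}_i}\to (H_{\mathscr{T}_i})_{H_{\mathscr{T}_i}\cap\tau^\perp}\cong H_{\mathscr{P}}\oplus F_i^{\gp}\to k[S_\tau]$, with $H_{\mathscr{P}}$ the non‑saturated monoid of Section~\ref{GKZ standard family}. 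Then the log structure generated by $\beta_i$ equals the one generated by $H_{\mathscr{P}}\oplus F_i^{\gp}\to k[S_\tau]$ (localization at a face of units), which equals the one generated by $H_{\mathscr{P}}\to k[S_\tau]$ (a direct summand of units dropped). This last prelog structure is independent of $i$, which proves at once the agreement of the two log structures on $U_\tau$ and the existence of the chart $H_{\mathscr{P}}$.

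\textbf{Total space and goodness.} The argument on $\mathcal{X}_i$ is formally identical, with $H_{\mathscr{T}_i}$ replaced throughout by $S(Q)\rtimes_i H_{\mathscr{T}_i}$: $P_i$ is pulled back from the standard family, hence étale‑locally has the chart $S(Q)\rtimes_i H_{\mathscr{T}_i}$; localization at the face $\{0\}\times(H_{\mathscr{T}_i}\cap\tau^\perp)$ — whose image lies in $\mathcal{O}^*$ — touches only the $H$‑factor, and Corollary~\ref{GKZ log} together with Corollary~\ref{invertible} (so that the discrepancy $g_{\Psi,\mathscr{T}_i}-g_{\Psi,\mathscr{P}}$ is absorbed into $F_i^{\gp}$ and the twisted product becomes the $\mathscr{P}$‑product) identifies this localization with $(S(Q)\rtimes H_{\mathscr{P}})\oplus F_i^{\gp}$; dropping $F_i^{\gp}$ leaves the chart $S(Q)\rtimes H_{\mathscr{P}}$, independently of $i$. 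Compatibility with $\varphi_{\mathscr{T}_1\mathscr{T}_2}$ is then automatic: $\varphi_{\mathscr{T}_1\mathscr{T}_2}$ is the shear $(\alpha,p)\mapsto(\alpha,p+\deg(\alpha)g^{12}(\alpha))$ with $g^{12}=g_{\Psi,\mathscr{T}_1}-g_{\Psi,\mathscr{T}_2}$ integral for $\mathbb{L}_\tau$, i.e.\ exactly the cocycle relating $\rtimes_1$ and $\rtimes_2$, so under the identifications above it becomes the identity on $S(Q)\rtimes H_{\mathscr{P}}$. For goodness, let $\bar s$ lie in the interior of the closed orbit attached to $\tau$; then $m\in S_\tau$ maps to a unit at $\bar s$ iff $m\in S_\tau^*$, and by the exact sequence in the proof of Lemma~\ref{lattices agree for GKZ} one has $S_\tau/S_\tau^*\cong H_{\mathscr{P}}^\sat\times(\mathbf{N}^{I_\emptyset^1\cap I_\emptyset^2})^*$ with $H_{\mathscr{P}}\to S_\tau\to S_\tau/S_\tau^*$ the inclusion $H_{\mathscr{P}}\hookrightarrow H_{\mathscr{P}}^\sat$ into the first factor; hence no nonzero element of $H_{\mathscr{P}}$ becomes a unit at $\bar s$, so $\overline{M}_{U_\tau,\bar s}\cong H_{\mathscr{P}}$ and the chart is good at $\bar s$. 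The statement on $\mathcal{X}_{\mathscr{P}}$ follows identically, using that $S(Q)\rtimes H_{\mathscr{P}}$ is sharp ($C(Q)$ strongly convex, $H_{\mathscr{P}}$ sharp), or by transporting the corresponding property of the standard family from \cite{ols08} along the étale map $p_{\tau,\mathscr{T}_i}$ and the chart identifications above.

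\textbf{Expected main obstacle.} Everything above is routine log geometry except for one point: the non‑saturated refinement of Corollary~\ref{GKZ log}, i.e.\ the claim that localizing the \emph{non‑saturated} monoid $H_{\mathscr{T}_i}$ (resp.\ $S(Q)\rtimes_i H_{\mathscr{T}_i}$) at $H_{\mathscr{T}_i}\cap\tau^\perp$ reproduces $H_{\mathscr{P}}$ (resp.\ $S(Q)\rtimes H_{\mathscr{P}}$) on the nose, and not merely a monoid with the same saturation. This is the functoriality, under coarsening of pavings, of the $SC_1(\mathbb{X}_{\geqslant 0})/B_1$‑description of $H_{\mathscr{P}}$ invoked in the proof of Proposition~\ref{log structure} (equivalently, of Alexeev's description of these monoids in \cite{Alex02}); I expect carrying out this bookkeeping, rather than the log‑geometric formalities, to be where the real work lies.
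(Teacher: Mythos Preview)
Your proposal is correct and follows essentially the same route as the paper: reduce to Corollary~\ref{GKZ log}, localize at the face mapping to units, and observe that what remains is the $\mathscr{P}$-chart independently of $i$. For the non-saturated refinement you correctly flag as the main obstacle, the paper resolves it not by direct $SC_1/B_1$ bookkeeping but by setting $F_i':=$ the preimage of $F_i$ in $H_{\mathscr{T}_i}$, noting that the square relating $(F_i',H_{\mathscr{T}_i})$ to $(F_i,H_{\mathscr{T}_i}^\sat)$ is cartesian, and then invoking (\cite{ols08} Corollary~3.1.22) to conclude $H_{\mathscr{T}_i}/F_i'\cong H_{\mathscr{P}}$ on the nose.
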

\begin{proof}
By Corollary \ref{GKZ log}, the map $H_{\mathscr{T}_i}\to H_{\mathscr{T}_i}^\sat\to H_\mathscr{P}^\sat\oplus F_i^{\gp}$ is a chart for the log structure $M_{U_{C(\mathscr{T}_i)}}$ on $U_\tau$.  Choose a geometric point $\overline{s}$ in the interior of the closed orbit corresponding to $\tau$. It induces a geometric point of $U_{C(\mathscr{T}_i)}$, still denoted by $\overline{s}$, by the \'{e}tale map $U_\tau\to U_{C(\mathscr{T}_i)}$. Let the residue field of $\overline{s}$ be $k(\overline{s})$. Therefore, we get a map $H_{\mathscr{T}_i}^\sat\to k(\overline{s})$ such that $F_i$ is the inverse image of $k(\overline{s})^*$. The quotient $H_{\mathscr{T}_i}^\sat/F_i=H_\mathscr{P}^\sat$ by Corollary \ref{GKZ log}.  It induces a morphism $f: H_{\mathscr{T}_i}\to k(\overline{s})$. The face $F_i':=f^{-1}(k(\overline{s})^*)$ is the preimage of $F_i$ in $H_{\mathscr{T}_i}\to H_{\mathscr{T}_i}^\sat$. We have the commutative diagram
\begin{diagram}
0&\rTo&F'_i&\rTo&H_{\mathscr{T}_i}&\rTo&H_\mathscr{P}&\rTo&0\\
&&\dTo &&\dTo &&\dTo &&\\
0&\rTo &F_i&\rTo &H_{\mathscr{T}_i}^\sat&\rTo &H_\mathscr{P}^\sat&\rTo &0
\end{diagram}

By Corollary \ref{GKZ log}, the bottom sequence is exact. We claim that the top sequence is also exact. Since $H_\mathscr{P}$ is integral, $H_\mathscr{P}\to H_{\mathscr{P}}^\sat$ is injective. It follows that $F'_i$ is mapped to $0$ in $H_\mathscr{P}$. By the definition of $F_i'$, the left square is cartesian. Therefore, the top sequence is exact except possibly at $H_\mathscr{P}$. By (\cite{ols08} Corollary 3.1.22), the quotient $H_{\mathscr{T}_i}/F_i'$ is actually $H_{\mathscr{P}}$. It follows that $H_\mathscr{P}\to H_\mathscr{P}^\sat\oplus F_i^{\gp}\to k[S_\tau]$ is a fine chart. This log structure is independent of the choice of splittings and the choice of $i$.

For the same reason, we can use the chart $S(Q)\rtimes H_\mathscr{P}\to k[S(Q)\rtimes_i S_\tau]$ for the log structure from $(\mathcal{X}_i, P_i)$. We can embed them into charts $(S(Q)\rtimes H_\mathscr{P})\oplus S_\tau^*\to k[S(Q)\rtimes_iS_\tau]$. The isomorphism $\varphi_{\mathscr{T}_1\mathscr{T}_2}$ induces an isomorphism between the pre-log structures $(S(Q)\rtimes H_\mathscr{P})\oplus S^*_\tau\to k[S(Q)\rtimes_i S_\tau]$.

For any geometric point $\overline{s}$ in the interior of the closed orbit corresponding to $\tau$, $H_\mathscr{P}$ is sharp, and $H_\mathscr{P}\backslash\{0\}$ is mapped to $0$ in $k(\overline{s})$. 
\end{proof}
 
 It follows that $\varphi_{\mathscr{T}_1\mathscr{T}_2}$ is a morphism in $\mathscr{K}_Q(U_\tau)$. Use the composition rule in the stack $\mathscr{K}_Q$,  

\begin{proposition}\label{second faithful}
We have an injective morphism $R_{12}\to U_{C(\mathscr{T}_1)}\times _{\mathscr{K}_Q}U_{C(\mathscr{T}_2)}$ as a morphism between sheaves over the big \'{e}tale site.
 \end{proposition}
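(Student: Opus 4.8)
The plan is to assemble the morphism from pieces already in hand and then read off injectivity. For the construction, write $\mathcal{F}_i:=(\mathcal{X}_{\mathscr{T}_i},P_{\mathscr{T}_i},\mathcal{L},\vartheta,\varrho)$ for the object of $\mathscr{K}_Q(U_{C(\mathscr{T}_i)})$ attached to $\mathscr{T}_i$, and recall $R_{12}=G_1\times U_\tau\times G_2$ together with its two étale projections $s\colon R_{12}\to U_{C(\mathscr{T}_1)}$ and $t\colon R_{12}\to U_{C(\mathscr{T}_2)}$. Given a scheme $S$ and an $S$-point $(\zeta_1,x,\zeta_2)$ of $R_{12}$, put $u_1:=s(\zeta_1,x,\zeta_2)=\rho_1(\zeta_1,p_{\tau,\mathscr{T}_1}(x))$ and $u_2:=t(\zeta_1,x,\zeta_2)=\rho_2(\zeta_2,p_{\tau,\mathscr{T}_2}(x))$. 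To lift $(u_1,u_2)$ to an $S$-point of $U_{C(\mathscr{T}_1)}\times_{\mathscr{K}_Q}U_{C(\mathscr{T}_2)}$ one must produce an isomorphism $\alpha\colon u_1^*\mathcal{F}_1\xrightarrow{\sim}u_2^*\mathcal{F}_2$ in $\mathscr{K}_Q(S)$; I take for $\alpha$ the composite, formed by the composition rule in the groupoid $\mathscr{K}_Q$, of the pullback of $F_{\zeta_1}$ from Proposition~\ref{first faithful} (an isomorphism $u_1^*\mathcal{F}_1\cong x^*p_{\tau,\mathscr{T}_1}^*\mathcal{F}_1$), the pullback $x^*\varphi_{\mathscr{T}_1\mathscr{T}_2}$ of the isomorphism of Proposition~\ref{GKZlog} — which by Corollary~\ref{invertible} is an isomorphism $x^*p_{\tau,\mathscr{T}_1}^*\mathcal{F}_1\cong x^*p_{\tau,\mathscr{T}_2}^*\mathcal{F}_2$ compatible with the polarization, the section, the $\mathbb{T}$-action and the log structure — and the pullback of $F_{\zeta_2}^{-1}$ (an isomorphism $x^*p_{\tau,\mathscr{T}_2}^*\mathcal{F}_2\cong u_2^*\mathcal{F}_2$). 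Since all three ingredients are natural in $S$, the assignment $(\zeta_1,x,\zeta_2)\mapsto(u_1,u_2,\alpha)$ defines a morphism of the associated sheaves over the big étale site, visibly compatible with $s$, $t$ and with the projections to $U_{C(\mathscr{T}_1)}$ and $U_{C(\mathscr{T}_2)}$.

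For injectivity, suppose two $S$-points $(\zeta_1,x,\zeta_2)$ and $(\zeta_1',x',\zeta_2')$ of $R_{12}$ have the same image. Reading off the two underlying points, $p_{\tau,\mathscr{T}_1}(x)$ and $p_{\tau,\mathscr{T}_1}(x')$ lie in a single $G_1$-orbit, and $p_{\tau,\mathscr{T}_2}(x)$, $p_{\tau,\mathscr{T}_2}(x')$ lie in a single $G_2$-orbit. Using the structure of $p_{\tau,\mathscr{T}_i}$ provided by Lemma~\ref{lattices agree for GKZ} — each is étale, and factors through the localization of $U_{C(\mathscr{T}_i)}$ along the face $\tau$ — together with the fact that $\mathbb{L}_\tau$ is generated by $S_{C(\mathscr{T}_1)}^{\gp}$ and $S_{C(\mathscr{T}_2)}^{\gp}$ (so a character of $\mathbb{L}_\tau$ killed by both is trivial), one deduces $x=x'$, after which the two triples differ only in their $G_1$- and $G_2$-coordinates. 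Finally, equality of the isomorphism components, together with the faithfulness of the $G_i$-actions on the family (the last sentence of the proof of Proposition~\ref{first faithful}) and the fact that $x^*\varphi_{\mathscr{T}_1\mathscr{T}_2}$ introduces no automorphism of the family, forces $\zeta_1=\zeta_1'$ and $\zeta_2=\zeta_2'$. Hence the morphism is injective.

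The formal half — writing down $\alpha$ and checking naturality — is immediate from Propositions~\ref{first faithful} and~\ref{GKZlog} and Corollary~\ref{invertible}. The main obstacle I anticipate is the disentangling step: recovering the separate factors $G_1$, $U_\tau$, $G_2$ of $R_{12}$ from the pair of $G_i$-orbits on $U_{C(\mathscr{T}_1)}$ and $U_{C(\mathscr{T}_2)}$ that the image records. This is exactly where the étale structure of $p_{\tau,\mathscr{T}_i}$ from Lemma~\ref{lattices agree for GKZ} and the identity $\mathbb{L}_\tau=S_{C(\mathscr{T}_1)}^{\gp}+S_{C(\mathscr{T}_2)}^{\gp}$ must be used together; a softer argument that invokes only that $s$ and $t$ are étale will not give injectivity, since $p_{\tau,\mathscr{T}_i}$ need not be injective on its own.
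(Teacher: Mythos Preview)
Your construction of the morphism is correct and is exactly what the paper has in mind: the sentence preceding the proposition, ``Use the composition rule in the stack $\mathscr{K}_Q$,'' together with the verification that $\varphi_{\mathscr{T}_1\mathscr{T}_2}$ is an isomorphism in $\mathscr{K}_Q(U_\tau)$, is the paper's entire argument for the existence of the map. Your description $\alpha = F_{\zeta_2}^{-1}\circ x^*\varphi_{\mathscr{T}_1\mathscr{T}_2}\circ F_{\zeta_1}$ makes this precise.

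However, your injectivity argument contains a genuine gap. You claim that from $u_1=u_1'$ and $u_2=u_2'$ alone one deduces $x=x'$, invoking $\mathbb{L}_\tau=\mathbb{L}_1+\mathbb{L}_2$ so that ``a character of $\mathbb{L}_\tau$ killed by both is trivial.'' But the hypothesis does not say $p_{\tau,\mathscr{T}_i}(x)=p_{\tau,\mathscr{T}_i}(x')$; it only says they lie in the same $G_i$-orbit. On the dense torus this means the restriction of the character $x/x'$ to $\mathbb{L}_i$ lies in $G_i$, i.e.\ is trivial on $\mathbb{L}\subset\mathbb{L}_i$ --- \emph{not} that it is trivial on $\mathbb{L}_i$. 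The lattice identity then yields only $x/x'\in G_\tau:=\ker(T_{\mathbb{L}_\tau^*}\to T_{\mathbb{L}^*})$, not $x=x'$. Concretely, for any $g\in G_\tau$ the triple $(\zeta_1 p_1(g)^{-1},\,gx,\,\zeta_2 p_2(g)^{-1})$ has the same image under $(s,t)$ as $(\zeta_1,x,\zeta_2)$, so the map $(s,t)\colon R_{12}\to U_{C(\mathscr{T}_1)}\times U_{C(\mathscr{T}_2)}$ has fibers of order $|G_\tau|$ over the dense locus. (This is consistent with the paper, which only asserts $(s,t)$ is \emph{finite}, not injective.)

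Consequently the isomorphism datum $\alpha$ cannot be used merely as a clean-up step after $x=x'$ is known; it must be used from the outset to separate the $|G_\tau|$ points in each fiber. The paper does not spell this out either --- its proof is the one terse sentence --- so you have correctly identified where the work lies, but the specific deduction you wrote down does not go through. A correct completion must show that distinct elements of $G_\tau$ produce distinct composite isomorphisms $\alpha$, which comes down to tracking how $g\in G_\tau$ acts on the chart $S(Q)\rtimes H_\mathscr{P}$ of the log structure over $U_\tau$ (Proposition~\ref{GKZlog}) and on the section $\vartheta$, and using faithfulness there.
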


Let's also denote $R_\mathscr{T}$ by $R_{\mathscr{T}\mathscr{T}}$ and define $R:=\coprod_{\mathscr{T}_1,\mathscr{T}_2} R_{\mathscr{T}_1\mathscr{T}_2}$. We have two \'{e}tale surjective morphisms $s,t: R\to U$ and the morphism $(s,t): R\to U\times U$ is finite. Let $R\times _{s,U,t}R$ be $X_2$. Define a morphism $\mu: X_2\to R$ as follows. For convenience, we also use the following notations. $R_{11}$ or $R_{22}$ means $R_{\mathscr{T}}$, $R_{12}$ or $R_{23}$ means $R_{\mathscr{T}_1\mathscr{T}_2}$ with $\mathscr{T}_1\neq\mathscr{T}_2$. $X_2$ is a disjoint union of the following different types.
\[
R_{11}\times_UR_{11}, R_{21}\times_UR_{11}, R_{11}\times_U R_{12}, R_{12}\times_UR_{23}. 
\]

For $R_{11}\times_UR_{11}$, denote the corresponding \'{e}tale chart by $U_1$ and the finite group scheme by $G_1$. $R_{11}=U_1\times G_1$. Define $\mu: R_{11}\times_UR_{11}\to R_{11}$ by,
\[
R_{11}\times_UR_{11}=(U_1\times G_1)_s\times_{U_1,t}U_1\times G_1\cong G_1\times (G_1\times U_1)\stackrel{m}{\to} G_1\times U_1\cong R_{11}. 
\]

The isomorphism "$\cong$" is due to the fact that $s$ is the projection. $m$ is the multiplication for the group scheme $G_1$. 

Let $\tau$ be the intersection of $\mathscr{T}_1$ and $\mathscr{T}_2$. Similarly for $R_{21}\times_UR_{11}$ and $R_{11}\times_UR_{12}$, the morphism $\mu$ is defined by 
\[
R_{11}\times_UR_{12}=(U_1\times G_1)_s\times_{U_1,t}(G_1\times U_\tau\times G_2)\cong G_1\times G_1\times U_\tau\times G_2\stackrel{m}{\to} G_1\times U_\tau\times G_2=R_{12},
\]

and 
\begin{align*}
R_{21}\times_UR_{11}&=(G_2\times U_\tau\times G_1)_s\times_{U_1,t}(U_1\times G_1)\\
&=G_2\times (U_\tau\times G_1)_s\times_{U_1,t}(U_1\times G_1)\\
&\cong G_2\times (U_\tau\times_{U_1,t}(U_1\times G_1))_s\times_{U_1,t}(U_1\times G_1)\\
&\cong G_2\times U_\tau\times_{U_1,t}((U_1\times G_1)_s\times_{U_1,t}(U_1\times G_1))\\
&\to G_2\times U_\tau\times_{U_1,t}(U_1\times G_1)\\
&\cong G_2\times U_\tau\times G_1=R_{21}.
\end{align*}

Here we used $\mu$ for $R_{11}$ and the automorphism $(g,x)\mapsto (g,gx)$ of $G_1\times U_1$ to replace $s$ by $t$. 

For $R_{12}\times_UR_{23}$, we can play the trick to compose the morphisms from $G_i$. So it surffices to define the morphism $U_\tau\times_{U_2} U_\upsilon\to U_\rho$, where $\tau$ is the intersection of $C(\mathscr{T}_1)$ and $C(\mathscr{T}_2)$, $\upsilon$ is the intersection of $C(\mathscr{T}_2)$ and $C(\mathscr{T}_3)$, and $\rho$ is the intersection of $C(\mathscr{T}_1)$ and $C(\mathscr{T}_3)$. The point is the intersection $\varrho=\overline{\tau}\cap \overline{\upsilon}$ is equal to the intersection $C(\mathscr{T}_1)\cap C(\mathscr{T}_2)\cap C(\mathscr{T}_3)$. The affine toric variety $U_\tau\times_{U_2}U_\upsilon$ is defined by the cone $\varrho^\vee$ with the lattice $\mathbb{L}_\tau+\mathbb{L}_\upsilon=\mathbb{L}_1+\mathbb{L}_2+\mathbb{L}_3$. The cone $\rho^\vee$ is contained in the cone $\varrho^\vee$ and the lattice $\mathbb{L}_\tau+\mathbb{L}_\upsilon$ is finer than the lattice $\mathbb{L}_\rho$. Therefore there is a natural inclusion between the monoids which induces a morphism $U_\tau\times_{U_2} U_\upsilon\to U_\rho$. This gives the desired morphism $\mu$. 

In summary, we have defined a morphism $\mu: X_2\to R$. Define $\epsilon: U\to R$ by the identity section $U_{C(\mathscr{T})}\to U_{C(\mathscr{T})}\times G=R_\mathscr{T}$. Let $\mathcal{C}$ be a category in which finite fiber products always exist. Recall the definition of a groupoid in $\mathcal{C}$ from (\cite{FC} Page 20).
\begin{definition}
A groupoid in $\mathcal{C}$ consists of the following data:
\begin{enumerate}
\item Two objects $X_0, X_1$ in $\mathcal{C}$ and two morphisms $s,t: X_1\to X_0$. 
\item a morphism $\mu: X_2:=X_s\times_{X_0,t}X_1\to X_1$. 
\end{enumerate}
Let $p_1,p_2$ be the first and second projections from $X_2$ to $X_1$. These data should satisfy the following conditions:
\begin{itemize}
\item[a)] The diagrams below are Cartesian.
\begin{diagram}
X_2 &\rTo^\mu &X_1&&&X_2 &\rTo^\mu &X_1&&&X_2 &\rTo^{p_2} &X_1\\
\dTo^{p_1}& &\dTo_t&&&\dTo^{p_2}& &\dTo_s&&&\dTo^{p_1}& &\dTo_t\\
X_1&\rTo^t &X_0&&&X_1&\rTo^s&X_0&&&X_1&\rTo^s &X_0.
\end{diagram}
\item[b)]
The following diagram commutes.
\begin{diagram}
X_1\times_{s,t}X_1\times_{s,t}X_1&\rTo^{\mu\times \Id}& X_1\times_{s,t}X_1\\
\dTo^{\Id\times \mu} &&\dTo_{\mu}\\
X_1\times_{s,t}X_1&\rTo^\mu &X_1.
\end{diagram}
\item[c)] There exists a morphism $\epsilon: X_0\to X_1$ such that $s\circ\epsilon=t\circ\epsilon=\Id$. 
\end{itemize}
\end{definition}

\begin{proposition}
The data $(U, R, s,t,\mu,\epsilon)$ is a groupoid in $(\Sch/k)$. 
\end{proposition}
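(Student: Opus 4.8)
The plan is to verify the three axioms (a), (b), (c) in the definition of a groupoid directly, exploiting the fact that $U$, $R$, $X_2$ and all of $s,t,\mu,\epsilon$ are built componentwise along the decompositions $U=\coprod_\mathscr{T}U_{C(\mathscr{T})}$ and $R=\coprod_{\mathscr{T}_1,\mathscr{T}_2}R_{\mathscr{T}_1\mathscr{T}_2}$, and $X_2$ decomposes into the four types $R_{11}\times_UR_{11}$, $R_{21}\times_UR_{11}$, $R_{11}\times_UR_{12}$, $R_{12}\times_UR_{23}$ listed above; so it suffices to check each axiom on each component (and, for (b), on the corresponding components of $X_1\times_{s,t}X_1\times_{s,t}X_1$). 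Axiom (c) is immediate: on $U_{C(\mathscr{T})}$ the morphism $\epsilon$ is the identity section $x\mapsto(x,e)$ of $R_{\mathscr{T}\mathscr{T}}=U_{C(\mathscr{T})}\times G_\mathscr{T}$, and since $s$ is the projection and $t$ the $G_\mathscr{T}$-action we get $s\circ\epsilon=t\circ\epsilon=\Id$. The third Cartesian square in (a) holds by the very definition of $X_2$ as a fibre product, so only the first two squares in (a) and the cocycle square (b) require work.

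I would then isolate two building blocks. The first is purely group-theoretic: on $R_{11}\times_UR_{11}$, $\mu$ is, up to the shear isomorphism that trades $s$ for $t$, just the multiplication $m$ of $G_\mathscr{T}$, so the two remaining Cartesian squares are the standard statement that translation by a fixed element is an isomorphism for the action groupoid of $G_\mathscr{T}$, and the cocycle square is associativity of $m$. Combined with the automorphism $(g,x)\mapsto(g,gx)$ of $G_i\times U_{C(\mathscr{T}_i)}$ already used in the construction of $\mu$, this also disposes of the $G_i$-factors in $R_{21}\times_UR_{11}$ and $R_{11}\times_UR_{12}$. The second building block is the toric gluing morphism $U_\tau\times_{U_{C(\mathscr{T}_2)}}U_\upsilon\to U_\rho$ entering $\mu$ on $R_{12}\times_UR_{23}$: using that $\overline{\tau}\cap\overline{\upsilon}=C(\mathscr{T}_1)\cap C(\mathscr{T}_2)\cap C(\mathscr{T}_3)=\varrho$ as faces of the secondary fan, $U_\tau\times_{U_{C(\mathscr{T}_2)}}U_\upsilon$ is the affine toric variety of $\varrho^\vee$ for the lattice $\mathbb{L}_\tau+\mathbb{L}_\upsilon=\mathbb{L}_1+\mathbb{L}_2+\mathbb{L}_3$, while $U_\rho$ is the same cone for the coarser lattice $\mathbb{L}_\rho=\mathbb{L}_1+\mathbb{L}_3$; the inclusion of monoids defines $\mu$, and the Cartesian properties follow, via transitivity of fibre products, from Lemma~\ref{lattices agree for GKZ} (the maps $p_{\tau,\mathscr{T}_i}$ are étale).

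The step I expect to be the real content is the cocycle square (b) on $R_{12}\times_UR_{23}$ (and, in the general three-arrow case, on $R_{12}\times_UR_{23}\times_UR_{34}$), which amounts to checking that the family isomorphisms $\varphi_{\mathscr{T}_1\mathscr{T}_2}$, $\varphi_{\mathscr{T}_2\mathscr{T}_3}$, $\varphi_{\mathscr{T}_1\mathscr{T}_3}$ — together with all the extra data, namely $\mathcal{L}$, $\Theta$, the $\mathbb{T}$-linearization, and the log structures of Proposition~\ref{GKZlog} — satisfy $\varphi_{\mathscr{T}_2\mathscr{T}_3}\circ\varphi_{\mathscr{T}_1\mathscr{T}_2}=\varphi_{\mathscr{T}_1\mathscr{T}_3}$ as morphisms in $\mathscr{K}_Q$ over the deepest overlap $U_\varrho$. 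Since $\varphi_{\mathscr{T}_i\mathscr{T}_j}$ is the twist $(\alpha,p)\mapsto(\alpha,p+\deg(\alpha)g^{ij}(\alpha))$ with $g^{ij}=g_{\Psi,\mathscr{T}_i}-g_{\Psi,\mathscr{T}_j}$, this reduces to the tautological telescoping identity $g^{12}+g^{23}=g^{13}$, once one knows (by Corollary~\ref{invertible} applied on $U_\varrho$) that all three functions take values in $(S_\varrho^*)_{\mathbf{Q}}$ with respect to the common refinement lattice $\mathbb{L}_1+\mathbb{L}_2+\mathbb{L}_3$, and that these twists commute with the $G_i$-actions, which was recorded in the proofs of Propositions~\ref{first faithful} and~\ref{second faithful}. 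The bookkeeping of the various lattices $\mathbb{L}$, $\mathbb{L}_\mathscr{T}$, $\mathbb{L}_\tau$, $\mathbb{L}_\rho$, of the induced log charts $H_\mathscr{P}$ on the overlaps (Corollary~\ref{GKZ log}), and of the extension of this picture to quadruple overlaps for the general instance of (b) is the only point needing care; the geometric input is already contained in Corollaries~\ref{GKZ log} and~\ref{invertible} and in the wall-crossing description recalled above. Assembling these checks yields all the groupoid axioms.
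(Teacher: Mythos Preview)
Your first two paragraphs are correct and match the paper's argument closely: the paper also notes that the third square in (a) is Cartesian by definition, reduces the remaining checks to $S$-points (hence to a groupoid in sets), splits into the four component types, disposes of the first three via the group-action structure, and for the $R_{12}\times_UR_{23}$ type establishes the two remaining Cartesian squares from the explicit description of $U_\tau\times_{U_2}U_\upsilon$ as $\spec k[\varrho^\vee\cap(\mathbb{L}_1+\mathbb{L}_2+\mathbb{L}_3)]$.

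Your third paragraph, however, misidentifies what axiom (b) asks. The data $(U,R,s,t,\mu,\epsilon)$ is a groupoid of \emph{schemes}; the family isomorphisms $\varphi_{\mathscr{T}_i\mathscr{T}_j}$, the line bundles, the theta sections, the $\mathbb{T}$-linearization and the log structures are not part of this data and play no role in the present proposition. Axiom (b) is the equality $\mu\circ(\mu\times\Id)=\mu\circ(\Id\times\mu)$ as morphisms of schemes. On the pure toric part this is immediate once the Cartesian squares are in place: all the maps entering $\mu$ are induced by inclusions of toric submonoids inside one ambient $\mathbb{L}_\mathbf{Q}$, so both compositions $U_{\tau_{12}}\times_{U_2}U_{\tau_{23}}\times_{U_3}U_{\tau_{34}}\to U_{\tau_{14}}$ are induced by the \emph{same} monoid inclusion and hence agree; mixed with the $G_i$-factors one is back to associativity of group multiplication and of a group action, exactly as you handled the other cases. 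The cocycle identity $\varphi_{\mathscr{T}_2\mathscr{T}_3}\circ\varphi_{\mathscr{T}_1\mathscr{T}_2}=\varphi_{\mathscr{T}_1\mathscr{T}_3}$ for the family isomorphisms is the content of the \emph{next} result in the paper, Lemma~\ref{cocycle condition}, which is used to construct the morphism $\overline{F}:\mathscr{T}_Q\to\mathscr{K}_Q$, not to verify that $(U,R)$ is a groupoid. So the ``real content'' you anticipated is in fact absent here; the scheme-level associativity is essentially formal.
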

\begin{proof}
First, the third diagram in a) is automatically Cartesian by the definition of $X_2$. For the rest conditions, it is equivalent to show that for any $S\in(\Sch/k)$, the data $(U(S), R(S), s,t,\mu,\epsilon)$ is a groupoid in sets. Therefore we check a), b), and c) for $S$-points. Divide into four different cases. For $R_{11}\times_UR_{11}, R_{21}\times_UR_{11}$ and $R_{11}\times_U R_{12}$, it reduces to the group actions on sets. For $R_{12}\times_UR_{23}$ case, recall that $U_\tau\times_{U_2}U_\upsilon$ is an affine toric variety defined by the monoid $\varrho^\vee\cap (\mathbb{L}_1+\mathbb{L}_2+\mathbb{L}_3)$. It follows that the following diagrams are Cartesian. 
\begin{diagram}
U_\tau\times_{U_2}U_\upsilon &\rTo^\mu &U_\rho &&&U_\tau\times_{U_2}U_\upsilon &\rTo^\mu &U_\rho\\
\dTo^{p_1}& &\dTo_{t}&&&\dTo^{p_2}& &\dTo_s\\
U_\tau&\rTo^t&U_1&&&U_\upsilon&\rTo^s&U_3
\end{diagram}

Then we can use $S$-points and check the diagrams for sets. 
\end{proof}

By (\cite{LMB} (3.4.3)), one can associate a stack $[U/R]$ with respect to the \'{e}tale topology on $(\Sch/k)$. Denote $[U/R]$ by $\mathscr{T}_Q$. 

\begin{theorem}
The stack $\mathscr{T}_Q$ is a proper Deligne--Mumford stack with finite diagonal. It admits a coarse moduli space $\mathcal{T}_Q$. 
\end{theorem}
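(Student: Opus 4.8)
The plan is to read off the Deligne--Mumford property and finiteness of the diagonal directly from the groupoid presentation $(U,R,s,t,\mu,\epsilon)$, to obtain the coarse space from the Keel--Mori theorem, and to deduce properness from the completeness of the secondary fan.

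First I would observe that $s,t\colon R\to U$ are \'{e}tale: on each piece $R_{\mathscr{T}_1\mathscr{T}_2}=G_1\times U_\tau\times G_2$ they are built from the finite \'{e}tale group-scheme projections and actions together with the \'{e}tale maps $p_{\tau,\mathscr{T}_i}$ of Lemma~\ref{lattices agree for GKZ} (and trivially so for $R_{\mathscr{T}\mathscr{T}}$, where $s$ is a projection and $t$ a finite \'{e}tale group action). Hence $U\to\mathscr{T}_Q=[U/R]$ is an \'{e}tale atlas and $\mathscr{T}_Q$ is Deligne--Mumford. For the diagonal I would use that for the quotient stack of a groupoid the canonical map $R\to U\times_{\mathscr{T}_Q}U$ is an isomorphism, so base-changing $\Delta\colon\mathscr{T}_Q\to\mathscr{T}_Q\times_k\mathscr{T}_Q$ along the \'{e}tale surjection $U\times_kU\to\mathscr{T}_Q\times_k\mathscr{T}_Q$ recovers exactly $(s,t)\colon R\to U\times_kU$. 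This morphism is finite by hypothesis (and automatically unramified, since its composite with either projection is the \'{e}tale $s$ or $t$), and since ``finite'' descends along fppf covers, $\Delta$ is finite; in particular $\mathscr{T}_Q$ is separated.

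Next, $U$ being a finite disjoint union of affine toric varieties of finite type over $k$, the stack $\mathscr{T}_Q$ is of finite type over $k$; as a separated Deligne--Mumford stack of finite type over a noetherian base it has finite inertia, so by the Keel--Mori theorem it admits a coarse moduli space $\mathcal{T}_Q$, and the structure map $\mathscr{T}_Q\to\mathcal{T}_Q$ is proper. It then suffices to prove $\mathcal{T}_Q$ is proper over $k$. For this I would identify $\mathcal{T}_Q$ with the toric variety of the complete fan $\Sigma(Q)$: since $m_\mathscr{T}$ is invertible in $k$, the group scheme $G_\mathscr{T}$ is linearly reductive and the coarse space of the chart quotient $[U_{C(\mathscr{T})}/G_\mathscr{T}]$ is $\spec\big(k[S_{C(\mathscr{T})}]^{G_\mathscr{T}}\big)=\spec k[C(\mathscr{T})^\vee\cap\mathbb{L}]$, the invariants being exactly the monomials $\mathrm{X}^p$ with $p\in C(\mathscr{T})^\vee\cap\mathbb{L}_\mathscr{T}$ pairing integrally with all of $\mathbb{L}^*$; this is the affine chart of the maximal cone $C(\mathscr{T})$ of the secondary fan $\Sigma(Q)\subset\mathbb{L}^*_\mathbf{R}$, whose cocharacter lattice $\mathbb{L}^*$ is free by the standing regular-simplex assumption. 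The \'{e}tale gluing data $R_{\mathscr{T}_1\mathscr{T}_2}$ over $U_\tau$ should descend, on coarse spaces, to the toric gluing of these charts along $\spec k[\tau^\vee\cap\mathbb{L}]$ prescribed by the faces $\tau$ of $\Sigma(Q)$, so that $\mathcal{T}_Q\cong X_{\Sigma(Q)}$, which is proper because $\Sigma(Q)$ is complete (Proposition~\ref{complete fan}, Definition~\ref{definition of the secondary fan}). Equivalently one could verify the valuative criterion on $\mathscr{T}_Q$ directly: a $\spec K$-point lifts, after a finite extension of $K$, into some chart $U_{C(\mathscr{T})}$; completeness of $\Sigma(Q)$ places the associated cocharacter-valued point of $\mathbb{L}^*$ in a cone $C(\mathscr{T}')$, the point then extends over the valuation ring inside $U_{C(\mathscr{T}')}$, and the gluing isomorphisms match this extension with the original point over $\spec K$.

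The Deligne--Mumford and finite-diagonal assertions are formal consequences of the groupoid data and the finiteness of $(s,t)$. The substantive step, and the one I expect to be the main obstacle, is properness: one must check that the $G_\mathscr{T}$-quotients of the affine charts and the \'{e}tale gluings $R_{\mathscr{T}_1\mathscr{T}_2}$ assemble on coarse spaces into precisely the toric variety of the complete secondary fan $\Sigma(Q)$ (or, alternatively, carry out the valuative-criterion argument in full), after which properness is immediate from completeness of the fan.
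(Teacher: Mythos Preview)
Your argument for the Deligne--Mumford property, the finiteness of the diagonal, and the existence of the coarse moduli space via Keel--Mori is essentially identical to the paper's: both read these off from the groupoid data $(U,R,s,t)$ with $s,t$ \'{e}tale and $(s,t)$ finite, using the identification $R\cong U\times_{\mathscr{T}_Q}U$.

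The genuine difference is in the properness step. The paper runs the valuative criterion directly on $\mathscr{T}_Q$ but \emph{through the moduli interpretation}: after reducing to a complete DVR with generic point landing in the open locus of irreducible fibers, it invokes the stable-reduction argument of Alexeev (\cite{Alex02} Theorem 2.8.1) / Olsson (\cite{ols08} Lemma 3.7.8) to produce a paving $\mathscr{P}$ and a height function $\psi$, which after a finite base change lands in the correct cone and gives the extension $\spec R\to U_{C(\mathscr{P})}$. Your primary route instead identifies the coarse space $\mathcal{T}_Q$ with the toric variety $X_{\Sigma(Q)}$ by computing the $G_\mathscr{T}$-invariants on each chart and matching the \'{e}tale gluings $R_{\mathscr{T}_1\mathscr{T}_2}$ with the standard toric gluings along faces, then concludes properness from completeness of $\Sigma(Q)$. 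This is valid, and in fact the paper carries out exactly this identification $\mathcal{T}_Q\cong X_{\Sigma(Q)}$---but only \emph{after} the present theorem, in the paragraph preceding the main Theorem~\ref{compactification for GKZ}. So you are reordering the logic: you front-load the coarse-space computation and use it to get properness for free, whereas the paper separates properness (via moduli-theoretic stable reduction) from the later identification of the coarse space. Your approach is more self-contained and avoids citing the Alexeev/Olsson semistable-reduction lemma; the paper's approach keeps the stack-level statement independent of the explicit toric description of the coarse space. Your alternative valuative-criterion sketch (placing the induced cocharacter in a cone of $\Sigma(Q)$) is closer in spirit to the paper's, but is purely combinatorial where the paper is moduli-theoretic.
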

\begin{proof}
Since $s$ and $t$ are both \'{e}tale, and $(s,t): R\to U\times U$ is finite, by (\cite{LMB} Proposition (4.3.1)), $\mathscr{T}_Q$ is a Deligne--Mumford stack, and the canonical morphism $p: U\to \mathscr{T}_Q$ is an \'{e}tale presentation. Since being finite is a property stable under base change and local in the \'{e}tale topology on target (loc. cit. (3.10)), consider the cartesian diagram
\begin{diagram}
U\times_{\mathscr{T}_Q}U&\rTo^i & U\times U\\
\dTo &&\dTo\\
\mathscr{T}_Q&\rTo^{\Delta} &\mathscr{T}_Q\times\mathscr{T}_Q
\end{diagram}

By (loc. cit. (3.4.3)), the fiber product $U\times_{\mathscr{T}_Q}U$ is isomorphic with $R$ and the morphism $i$ is the finite morphism $R\to U\times U$. This implies that the diagonal $\Delta$ is finite. By (\cite{ols08} Theorem 1.4.2), $\mathscr{T}_Q$ admits a coarse moduli space $\mathcal{T}_Q$. Since $\Delta$ is proper, $\mathscr{T}_Q$ is separated. Since $U$ is quasi-compact, $\mathscr{T}_Q$ is quasi-compact. By the paragraph after 1.3 in \cite{ols04}, we can use the valuative criterion of properness for discrete valuation rings . Furthermore, we can assume that the DVR $(R,K)$ is complete with respect to the maximal ideal $\mathfrak{m}$, and the generic point $\eta$ is in the open substack that corresponds to geometrically irreducible fiber $X_\eta$. After an \'{e}tale base change, we can assume that $X_\eta$ admits a $K$-point in the open torus orbit. The proof essentially follows from the proof of (\cite{Alex02} Theorem 2.8.1) or (\cite{ols08} Lemma 3.7.8). Choose a uniformizer $s$. Let $\mathscr{P}$ be the decomposition, and $\psi: Q(\mathbf{Z})\to \mathbf{Z}$ be the integral valued function obtained in the above proofs. Then, after a finite base change if necessary, $\psi\in C(\mathscr{P})\cap \pic\times\mathbf{Z}^{I_\emptyset}$. This defines a morphism $\spec R\to U_{C(\mathscr{P})}$. The pull back of the standard family extends $X_\eta$ to the whole $\spec R$. 
\end{proof}

\begin{lemma}\label{cocycle condition}
Denote the composition of morphism in the stack $\mathscr{K}_Q$ also by $\mu'$. The following diagram commutes.
\begin{diagram}
R_{12,s}\times_{U_2,t}R_{23}&\rTo & (U_{C(\mathscr{T}_1)}\times_{\mathscr{K}_Q}U_{C(\mathscr{T}_2)})\times_U(U_{C(\mathscr{T}_2)}\times_{\mathscr{K}_Q}U_{C(\mathscr{T}_3)})\\
\dTo^{\mu} &&\dTo_{\mu'}\\
R_{13} &\rTo &U_{C(\mathscr{T}_1)}\times_{\mathscr{K}_Q}U_{C(\mathscr{T}_3)}
\end{diagram}
\end{lemma}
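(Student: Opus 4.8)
The statement is a cocycle (coherence) identity, and the plan is to reduce it, $S$-point by $S$-point for an arbitrary $k$-scheme $S$, to a few elementary compatibilities of the isomorphisms out of which the morphism $R\to U\times_{\mathscr{K}_Q}U$ was built in Propositions~\ref{first faithful} and~\ref{second faithful}. Recall that that morphism sends a group element $\zeta\in G_i$ to the isomorphism $F_\zeta$ of the tuple $(\mathcal{X}_{\mathscr{T}_i},\mathcal{L},\Theta,\varrho,P_{\mathscr{T}_i})$ given on generators by $\mathrm{X}^{(n,\alpha,p)}\mapsto \mathrm{X}^p(\zeta)\,\mathrm{X}^{(n,\alpha,p)}$, and sends a point of $U_\tau\subset R_{ij}$ (with $i\neq j$, $\tau=C(\mathscr{T}_i)\cap C(\mathscr{T}_j)$) to the base changes along $p_{\tau,\mathscr{T}_i}$, $p_{\tau,\mathscr{T}_j}$ of the gluing isomorphism $\varphi_{\mathscr{T}_i\mathscr{T}_j}\colon (\alpha,p)\mapsto(\alpha,p+\deg(\alpha)g^{ij}(\alpha))$, together with the identifications of log structures, section, line bundle and $\mathbb{T}$-action of Proposition~\ref{GKZlog}. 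Since composition $\mu'$ in $\mathscr{K}_Q$ is literally composition of isomorphisms of the tuple $(\mathcal{X},\mathcal{L},\Theta,\varrho,P)$, and every one of its components is visibly preserved by each $F_\zeta$ and each $\varphi_{\mathscr{T}_i\mathscr{T}_j}$, the lemma amounts to checking that the composites prescribed by $\mu$ on $X_2=R_{12,s}\times_{U_2,t}R_{23}$ induce, on generators $\mathrm{X}^{(n,\alpha,p)}$, the same $\mathcal{O}_S$-algebra isomorphisms as the composites $\mu'$.

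I would then run through the four types of component of $X_2$ used to define $\mu$. For $R_{11}\times_U R_{11}\to R_{11}$ the only input is $F_{\zeta_1}\circ F_{\zeta_2}=F_{\zeta_1\zeta_2}$, immediate from the character formula, and this is exactly the group multiplication $m$ in $G_1$ by which $\mu$ was defined; the same observation handles the $R_{21}\times_U R_{11}$ and $R_{11}\times_U R_{12}$ components, where $\mu$ combines the multiplication in $G_i$ with the automorphism $(g,x)\mapsto(g,gx)$ used to replace $s$ by $t$, so that on the $\mathscr{K}_Q$-side the $G_i$-factor is simply absorbed into $\varphi_{\mathscr{T}_i\mathscr{T}_j}$ by composing $F$'s; one checks directly on generators that the scalar produced by commuting a character past the coordinate shift $p\mapsto p+\deg(\alpha)g^{ij}(\alpha)$ is exactly the one produced by transporting $\zeta$ through the étale maps $p_{\tau,\mathscr{T}_i}$. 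For $R_{12}\times_U R_{23}\to R_{13}$, after absorbing the $G_i$-factors one is left with the morphism $U_\tau\times_{U_2}U_\upsilon\to U_\rho$ and the cocycle identity $\varphi_{\mathscr{T}_2\mathscr{T}_3}\circ\varphi_{\mathscr{T}_1\mathscr{T}_2}=\varphi_{\mathscr{T}_1\mathscr{T}_3}$ over the triple overlap. This is forced by $g^{12}+g^{23}=(g_{\Psi,\mathscr{T}_1}-g_{\Psi,\mathscr{T}_2})+(g_{\Psi,\mathscr{T}_2}-g_{\Psi,\mathscr{T}_3})=g^{13}$, once one uses Corollary~\ref{invertible} (applied to the common face, not only to walls) to know that all three functions take values in the relevant $(S^*)_{\mathbf{Q}}$, the geometric fact $\varrho=\overline{\tau}\cap\overline{\upsilon}=C(\mathscr{T}_1)\cap C(\mathscr{T}_2)\cap C(\mathscr{T}_3)$ which guarantees the three families and functions all restrict to $U_\tau\times_{U_2}U_\upsilon$, and the lattice compatibility $\mathbb{L}_\tau+\mathbb{L}_\upsilon=\mathbb{L}_1+\mathbb{L}_2+\mathbb{L}_3$ refining $\mathbb{L}_\rho$; the log-structure charts $H_\mathscr{P}$ and $S(Q)\rtimes H_\mathscr{P}$ agree by iterating Proposition~\ref{GKZlog}.

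The formal skeleton — a morphism of schemes is determined by its effect on $S$-points, and composition in $\mathscr{K}_Q$ unwinds to composition of graded algebra isomorphisms each component of which is preserved — is routine. The main obstacle is purely bookkeeping in the mixed components and in the triple-overlap component: one must keep precise track of which base change, which $G_i$-action, and which gluing isomorphism are composed in which order, and verify in particular that the scalar factor introduced by the swap $(g,x)\mapsto(g,gx)$ on the groupoid side matches the scalar introduced on the $\mathscr{K}_Q$ side when a character is moved past the coordinate shift defining $\varphi_{\mathscr{T}_i\mathscr{T}_j}$. Once this is carried out the remaining verifications of the three Cartesian squares and the associativity pentagon collapse to the identities already isolated above.
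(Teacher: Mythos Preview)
Your proposal is correct and follows essentially the same approach as the paper: the key point in both is that the cocycle identity $\varphi_{\mathscr{T}_2\mathscr{T}_3}\circ\varphi_{\mathscr{T}_1\mathscr{T}_2}=\varphi_{\mathscr{T}_1\mathscr{T}_3}$ reduces to the additive identity $g^{12}+g^{23}=g^{13}$ for the differences $g^{ij}=g_{\Psi,\mathscr{T}_i}-g_{\Psi,\mathscr{T}_j}$. The paper's proof is much terser---it simply declares that the only nontrivial case is $1\neq 2$, $2\neq 3$ and dispatches it with this identity---whereas you carefully walk through all four component types and the $G_i$-bookkeeping; but there is no substantive difference in the argument.
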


\begin{proof}
The only nontrivial case to check is $1\neq 2$ and $2\neq 3$, and the morphism $R_{ij}\to U_{C(\mathscr{T}_i)}\times_{\mathscr{K}_Q}U_{C(\mathscr{T}_j)}$ is defined by the isomorphism $\varphi_{\mathscr{T}_i\mathscr{T}_j}$. However, the isomorphism $\varphi_{\mathscr{T}_i\mathscr{T}_j}$ is defined by the difference $g^{ij}=g_{\Psi,\mathscr{T}_i}-g_{\Psi,\mathscr{T}_j}$. Over $U_{C(\mathscr{T}_1)}\times_UU_{C(\mathscr{T}_2)}\times_UU_{C(\mathscr{T}_3)}$, the cocycle condition
\[
\varphi_{\mathscr{T}_i,\mathscr{T}_k}=\varphi_{\mathscr{T}_j\mathscr{T}_k}\circ\varphi_{\mathscr{T}_i\mathscr{T}_j},
\]

is satisfied. This means exactly that the diagram above commutes in this case. 
\end{proof}

The collection $\underline{\mathscr{U}}=\{U_{C(\mathscr{T}_i)}\}$ is an \'{e}tale cover of $\mathscr{T}_Q$. For each $q\in Q(\mathbf{Q})$, the collection $\{g^{ij}(q)=g_{\Psi,\mathscr{T}_i}(q)-g_{\Psi,\mathscr{T}_j}(q)\}$ is a $1$-cocycle in $\check{C}^1(\underline{\mathscr{U}},\mathcal{O}^*)$. So it represents a line bundle $\mathcal{L}_q$. Lemma~\ref{cocycle condition} means  the gluing $\varphi_{\mathscr{T}_i\mathscr{T}_j}$ of the universal family is by twisting by the algebra of line bundles $\oplus\mathcal{L}_q$. 

\begin{proposition}\label{faithful}
Let $R':=U\times_{\mathscr{K}_Q}U$. The morphism $R\to R'$ is injective as a morphism between sheaves over the big \'{e}tale site.
\end{proposition}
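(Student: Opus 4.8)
The plan is to reduce the claim to the two injectivity statements already proved, Proposition~\ref{first faithful} and Proposition~\ref{second faithful}, by exploiting the fact that $R$ and $R'$ decompose compatibly as disjoint unions indexed by ordered pairs of coherent triangulations. First I would record that forming the $2$-fibre product of sheaves over the stack $\mathscr{K}_Q$ commutes with disjoint unions in each variable — equivalently, coproducts in the big \'etale topos are disjoint and stable under pullback, so a morphism from a connected scheme into a disjoint union factors through a unique component. Applying this to $U=\coprod_{\mathscr{T}}U_{C(\mathscr{T})}$ in both slots gives a canonical decomposition
\[
R' \;=\; U\times_{\mathscr{K}_Q}U \;=\; \coprod_{\mathscr{T}_1,\mathscr{T}_2}\bigl(U_{C(\mathscr{T}_1)}\times_{\mathscr{K}_Q}U_{C(\mathscr{T}_2)}\bigr),
\]
matching the decomposition $R=\coprod_{\mathscr{T}_1,\mathscr{T}_2}R_{\mathscr{T}_1\mathscr{T}_2}$.

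Next I would check that the morphism $R\to R'$ respects these decompositions, i.e. that $R_{\mathscr{T}_1\mathscr{T}_2}$ is carried into the single component $U_{C(\mathscr{T}_1)}\times_{\mathscr{K}_Q}U_{C(\mathscr{T}_2)}$ with no ``cross terms''. This is immediate from the construction: the morphism $R_{\mathscr{T}_1\mathscr{T}_2}\to R'$ is assembled from the source and target maps $s,t$ together with the isomorphism $\varphi_{\mathscr{T}_1\mathscr{T}_2}$ of Proposition~\ref{GKZlog}, and by construction $s$ factors through $U_{C(\mathscr{T}_1)}$ while $t$ factors through $U_{C(\mathscr{T}_2)}$; hence the composite $R_{\mathscr{T}_1\mathscr{T}_2}\to R'\to U\times U$ factors through $U_{C(\mathscr{T}_1)}\times U_{C(\mathscr{T}_2)}$. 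Thus the restriction of $R\to R'$ to the $(\mathscr{T}_1,\mathscr{T}_2)$-block is precisely the morphism $R_{\mathscr{T}_1\mathscr{T}_2}\to U_{C(\mathscr{T}_1)}\times_{\mathscr{K}_Q}U_{C(\mathscr{T}_2)}$, which is injective by Proposition~\ref{first faithful} when $\mathscr{T}_1=\mathscr{T}_2$ and by Proposition~\ref{second faithful} when $\mathscr{T}_1\neq\mathscr{T}_2$.

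Finally I would conclude by the general fact that a disjoint union of monomorphisms of sheaves is a monomorphism: for $(\mathscr{T}_1,\mathscr{T}_2)\neq(\mathscr{T}_1',\mathscr{T}_2')$ the blocks $R_{\mathscr{T}_1\mathscr{T}_2}$ and $R_{\mathscr{T}_1'\mathscr{T}_2'}$ map into disjoint components of $R'$, so the fibre product $R_{\mathscr{T}_1\mathscr{T}_2}\times_{R'}R_{\mathscr{T}_1'\mathscr{T}_2'}$ is empty, while for equal index pairs it is $R_{\mathscr{T}_1\mathscr{T}_2}$ itself by the block injectivity above; hence the diagonal $R\to R\times_{R'}R$ is an isomorphism and $R\to R'$ is injective.

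I do not expect a genuine obstacle here: all the substantive content is already contained in Propositions~\ref{first faithful} and~\ref{second faithful}. The only point requiring mild care is the bookkeeping that identifies the blockwise structure of $R\to R'$ and rules out cross terms, together with the observation that injectivity of a morphism of sheaves is \'etale-local on the test scheme, so one may verify it after restricting to connected test schemes, where the disjoint-union descriptions of $U$, $R$ and $R'$ become literal.
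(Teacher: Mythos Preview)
Your proposal is correct and follows the same approach as the paper: the paper's proof is the single line ``Combine Proposition~\ref{first faithful} and Proposition~\ref{second faithful},'' and your argument simply spells out the bookkeeping (the disjoint-union decompositions of $R$ and $R'$ and the absence of cross terms) that makes this combination legitimate. There is nothing to correct.
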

\begin{proof}
Combine Proposition \ref{first faithful} and Proposition \ref{second faithful}. 
\end{proof}

\begin{proposition}
The $1$-morphisms $F:U\to \mathscr{K}_Q$ and $R\to U\times_{\mathscr{K}_Q}U$ induces a proper $1$-morphism $\overline{F}: \mathscr{T}_Q\to \mathscr{K}_Q$ between algebraic stacks. 
\end{proposition}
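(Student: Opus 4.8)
The plan is to produce $\overline{F}$ by descent along the étale presentation $p\colon U\to\mathscr{T}_Q=[U/R]$, and then to deduce properness from the fact that $\mathscr{T}_Q$ is proper over $\spec k$ while $\mathscr{K}_Q$ is separated over $\spec k$.

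First, the construction of $\overline{F}$. A $1$-morphism $[U/R]\to\mathscr{K}_Q$ is the same datum as an $R$-equivariant $1$-morphism $U\to\mathscr{K}_Q$: a $1$-morphism $F$, a $2$-isomorphism between $F\circ s$ and $F\circ t$ as $1$-morphisms $R\to\mathscr{K}_Q$, and the cocycle condition for this $2$-isomorphism over $X_2=R\times_{s,U,t}R$. By the modular description of $U\times_{\mathscr{K}_Q}U$, the hypothesised morphism $R\to U\times_{\mathscr{K}_Q}U$ is precisely such a $2$-isomorphism, assembled from the isomorphisms $\varphi_{\mathscr{T}_1\mathscr{T}_2}$ and the group actions of the $G_i$ from Section~\ref{glue GKZ families}; and Lemma~\ref{cocycle condition} is exactly the required cocycle identity $\varphi_{\mathscr{T}_i\mathscr{T}_k}=\varphi_{\mathscr{T}_j\mathscr{T}_k}\circ\varphi_{\mathscr{T}_i\mathscr{T}_j}$ rephrased as compatibility with $\mu$. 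Hence, by the universal property of the quotient stack (\cite{LMB} (3.4.3)), this descent datum determines a $1$-morphism $\overline{F}\colon\mathscr{T}_Q\to\mathscr{K}_Q$ together with a $2$-isomorphism $\overline{F}\circ p\cong F$. Since $\mathscr{T}_Q$ is of finite type over $k$ and $\mathscr{K}_Q$ is of finite type over $k$ by \cite{ols08}, $\overline{F}$ is automatically of finite type; and since $\mathscr{T}_Q\to\spec k$ is separated and factors through $\overline{F}$, the morphism $\overline{F}$ is separated. (Proposition~\ref{faithful}, which says $R\to U\times_{\mathscr{K}_Q}U$ is a monomorphism of big-étale sheaves, further shows $\overline{F}$ is a representable monomorphism — a first step toward identifying it with the normalization — but only separatedness is needed below.)

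It remains to show $\overline{F}$ is universally closed. We use that $\mathscr{T}_Q$ is proper over $\spec k$ and $\mathscr{K}_Q$ is separated over $\spec k$ (\cite{ols08}). Factor $\overline{F}$ through its graph $\Gamma_{\overline{F}}\colon\mathscr{T}_Q\to\mathscr{T}_Q\times_k\mathscr{K}_Q$ followed by the second projection $\mathscr{T}_Q\times_k\mathscr{K}_Q\to\mathscr{K}_Q$. The first map is a closed immersion, being the base change of the diagonal $\Delta_{\mathscr{K}_Q/k}$ along $\overline{F}\times\Id_{\mathscr{K}_Q}$ together with separatedness of $\mathscr{K}_Q/k$; the second is proper, being the base change of the proper morphism $\mathscr{T}_Q\to\spec k$. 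A composition of a closed immersion and a proper morphism is proper, so $\overline{F}$ is proper. One could instead verify the valuative criterion directly, repeating the argument used for the properness of $\mathscr{T}_Q$, but the factorization above is shorter.

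I do not expect a genuine obstacle here: the geometric content — the compatibility of the glued families $\mathcal{X}_{\mathscr{T}_i}$ over the overlaps and the cocycle relation among the twists by $\bigoplus_q\mathcal{L}_q$ — is already contained in Lemma~\ref{cocycle condition} and Proposition~\ref{faithful}. The most delicate points are bookkeeping: carefully translating between the ``groupoid equipped with a $1$-morphism to $\mathscr{K}_Q$'' picture and the descent datum for $1$-morphisms out of $[U/R]$ (in particular checking that the $2$-isomorphism $\overline{F}\circ p\cong F$ induced by $R\to U\times_{\mathscr{K}_Q}U$ is the same one encoded by $\varphi_{\mathscr{T}_1\mathscr{T}_2}$), and confirming that the standard properness argument is valid over the noetherian, not necessarily field, base ring $k$, which it is since every stack in sight is of finite type over $k$.
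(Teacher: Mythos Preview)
Your construction of $\overline{F}$ via descent and the cocycle Lemma~\ref{cocycle condition} is exactly the paper's argument, phrased slightly differently: the paper passes through the intermediate groupoid $(U,R')$ with $R'=U\times_{\mathscr{K}_Q}U$ and then uses that $[U/R']\hookrightarrow\mathscr{K}_Q$, but this is the same descent datum you describe. For properness, the paper simply cites the standard fact that a morphism from a proper stack to a separated stack is proper; your graph factorization is the usual proof of that fact.

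One correction is needed in your graph argument: for an algebraic stack $\mathscr{K}_Q$ with nontrivial inertia, the diagonal $\Delta_{\mathscr{K}_Q/k}$ is \emph{not} a closed immersion even when $\mathscr{K}_Q$ is separated---separatedness for stacks only means the diagonal is proper. Hence $\Gamma_{\overline{F}}$ is proper, not a closed immersion. This does not harm the conclusion, since a composition of proper morphisms is still proper, but the sentence as written is false. (Your parenthetical remark that Proposition~\ref{faithful} makes $\overline{F}$ a ``representable monomorphism'' is also imprecise: injectivity of $R\to R'$ gives representability, i.e.\ faithfulness on automorphisms, but not that $\overline{F}$ is a monomorphism of stacks.)
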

\begin{proof}
Let $R':=U\times_{\mathscr{K}_Q}U$. Consider the groupoid $(U, R', s',t', \mu',\epsilon')$ in $(\Sch/k)$ induced by the morphism $F:U\to \mathscr{K}_Q$. By Lemma \ref{cocycle condition}, we have a morphism $\overline{F}: (U,R, s,t, \mu,\epsilon)\to (U,R',s',t',\mu',\epsilon')$ between groupoids in $(\Sch/k)$, and it induces a morphism $\overline{F}: [U/R]\to [U/R']$ by the universal property of stackification. By (\cite{LMB} Proposition (3.8)), $[U/R']$ is a substack of $\mathscr{K}_Q$. The composition gives $\overline{F}: \mathscr{T}_Q\to \mathscr{K}_Q$.  Since $\mathscr{T}_Q$ and $\mathscr{K}_Q$ are both proper, $\overline{F}$ is proper by (\cite{ols13} Proposition 10.1.4 (iv)). 
\end{proof}

\begin{proposition}
The proper morphism $\overline{F}$ is surjective. 
\end{proposition}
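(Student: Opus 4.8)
The plan is to combine the properness of $\overline{F}$ just established with the fact that $\overline{F}$ is dominant. A proper morphism is universally closed, so the image of $\overline{F}$ is a closed substack of $\mathscr{K}_Q$; consequently, once we know this image is dense it must be all of $\mathscr{K}_Q$, i.e.\ $\overline{F}$ is surjective. Thus the whole task reduces to showing dominance.

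To establish dominance, recall from \cite{Alex02} and \cite{ols08} that $\mathscr{K}_Q$ is irreducible: it is a compactification of the open substack $\mathscr{K}_Q^{\circ}$ parametrizing honest toric pairs $(X_Q,\mathcal{L},\Theta)$ with $\Theta$ containing no torus orbit, so $\mathscr{K}_Q^{\circ}$ is dense in $\mathscr{K}_Q$ and it suffices to see that $\operatorname{Im}(\overline{F})$ meets $\mathscr{K}_Q^{\circ}$. Fix any coherent triangulation $\mathscr{T}$ and restrict the universal family $\mathcal{X}_\mathscr{T}\to U_{C(\mathscr{T})}$ to the open torus orbit of $U_{C(\mathscr{T})}$. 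On that orbit every element of $S_{C(\mathscr{T})}$ is invertible, and the exponents $\Psi(\omega)-g_{\Psi,\mathscr{T}}(\omega)$ occurring in $\vartheta=\sum_{\omega\in Q(\mathbf{Z})}\mathrm{X}^{\Psi(\omega)-g_{\Psi,\mathscr{T}}(\omega)}\vartheta_\omega$ all lie in $C(\mathscr{T})^{\vee}$, hence become units; so the restricted family has all its fibres isomorphic to $X_Q$ and $\Theta$ containing no torus orbit. In other words $F\colon U\to\mathscr{K}_Q$ carries the open orbit of every chart into $\mathscr{K}_Q^{\circ}$, and as $\mathscr{T}$ and the chosen point vary one sweeps out, up to the action of the torus on $H^0(\mathcal{L})$, every honest toric pair; equivalently $F$ sends the generic point of each irreducible chart $U_{C(\mathscr{T})}$ onto the generic point of $\mathscr{K}_Q$. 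Since $\overline{F}\circ p=F$ for the \'etale presentation $p\colon U\to\mathscr{T}_Q$, we get $\mathscr{K}_Q^{\circ}\subseteq\operatorname{Im}(F)\subseteq\operatorname{Im}(\overline{F})$.

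Combining the two steps: $\operatorname{Im}(\overline{F})$ is closed and contains the dense substack $\mathscr{K}_Q^{\circ}$, hence equals $\mathscr{K}_Q$, so $\overline{F}$ is surjective. The one point that requires a little care is the identification, over the open orbits, of $\mathcal{X}_\mathscr{T}$ with the generic toric pair, together with the assertion that these exhaust $\mathscr{K}_Q^{\circ}$; both follow directly from the explicit formulas for $\vartheta$ and $g_{\Psi,\mathscr{T}}$ and from the toric description of the charts, but one should phrase them so as not to circularly invoke the normalization claim. An alternative, more hands-on route would avoid any density argument and instead lift an arbitrary geometric point of $\mathscr{K}_Q$ — a stable toric pair over an algebraically closed field whose underlying scheme is $X_{\mathscr{P}}$ for a coherent paving $\mathscr{P}$ of $Q$ — to the point of $U_{C(\mathscr{T})}$ in the torus orbit corresponding to $C(\mathscr{P})$, for any coherent triangulation $\mathscr{T}$ refining $\mathscr{P}$ with $I\subset\mathscr{T}$, using the comparison with Olsson's standard family from Section~\ref{GKZ standard family}; this is the direction that carries the real weight, since it needs the structure theory of stable toric pairs, which is why the density argument above is the one I would actually run.
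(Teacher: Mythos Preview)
Your argument is correct and follows essentially the same route as the paper: identify the dense open locus $\mathscr{K}_Q^{\circ}$ (the paper's $\mathscr{U}$) of honest toric pairs, observe that the open torus orbit of any chart $U_{C(\mathscr{T})}$ already surjects onto it, and then invoke properness of $\overline{F}$ to pass from dense image to surjectivity. The paper's proof is terser but structurally identical; your additional remarks about the coefficients of $\vartheta$ becoming units and the alternative point-by-point lifting are elaborations rather than a different strategy.
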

\begin{proof}
Let $\mathscr{U}$ be the open dense locus of $\mathscr{K}_Q$ with trivial log structure. It is an open substack that classifying families all of whose geometric fibers are irreducible. The objects are polarized toric varieties $X_Q$ with a torus embedding $T\to X_Q$ constructed from the integral polytope $Q$. The moduli is only for the divisor $\Theta$. Fix any triangulation $\mathscr{T}$ and consider the open torus orbit of $U_{C(\mathscr{T})}$. This torus already parametrizes all pairs $(X_Q, \Theta)$ as above. Therefore, the image of $U_{C(\mathscr{T})}$ contains $\mathscr{U}$. Since $\overline{F}$ is proper and $\mathscr{U}$ is dense in $\mathscr{K}_Q$, $\overline{F}$ is surjective. In particular, $F: U\to \mathscr{K}_Q$ is surjective. By (\cite{LMB} Proposition (3.8)), $[U/R']$ is isomorphic to $\mathscr{K}_Q$. 
\end{proof}

\begin{proposition}
The $1$-morphism $\overline{F}: \mathscr{T}_Q\to \mathscr{K}_Q$ is representable. In particular $\overline{F}$ is proper as a representable morphism. 
\end{proposition}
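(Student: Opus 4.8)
The plan is to apply the standard criterion that a morphism of algebraic stacks is representable (by algebraic spaces) if and only if it is faithful on groupoid fibers, equivalently if and only if its diagonal is a monomorphism (see \cite{LMB}), and then to identify that diagonal with the morphism $R\to R'$ already analysed. Recall that $\mathscr{T}_Q=[U/R]$ and, by the preceding proposition, $\mathscr{K}_Q\cong[U/R']$ with $R'=U\times_{\mathscr{K}_Q}U$; moreover $\overline F$ is induced by the morphism of groupoids $(U,R)\to(U,R')$ which is the identity on $U$ and the inclusion $R\hookrightarrow R'$ on arrows. In particular the composite $U\to\mathscr{T}_Q\to\mathscr{K}_Q$ is the étale presentation $U\to\mathscr{K}_Q$, so the two projections $R'\to U\to\mathscr{T}_Q$ define an étale surjection $R'\to\mathscr{T}_Q\times_{\mathscr{K}_Q}\mathscr{T}_Q$.

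First I would record the ``diagonal trick'': for schemes $V,W$ over an algebraic stack $\mathscr{X}$ with $\mathscr{X}$ over $\mathscr{Y}$, one has a canonical isomorphism
\[
\mathscr{X}\times_{\mathscr{X}\times_{\mathscr{Y}}\mathscr{X}}\bigl(V\times_{\mathscr{Y}}W\bigr)\;\cong\;V\times_{\mathscr{X}}W ,
\]
compatible with the forgetful maps to $V\times_{\mathscr{Y}}W$. Applying this with $\mathscr{X}=\mathscr{T}_Q$, $\mathscr{Y}=\mathscr{K}_Q$ and $V=W=U$ identifies the pullback of $\Delta_{\overline F}\colon\mathscr{T}_Q\to\mathscr{T}_Q\times_{\mathscr{K}_Q}\mathscr{T}_Q$ along the étale surjection $R'\to\mathscr{T}_Q\times_{\mathscr{K}_Q}\mathscr{T}_Q$ with the morphism $R=U\times_{\mathscr{T}_Q}U\to U\times_{\mathscr{K}_Q}U=R'$, and one checks that under the construction of $\overline F$ from the groupoid morphism this is exactly the inclusion $R\hookrightarrow R'$. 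Since the property of being a monomorphism can be checked after an fppf (in particular étale) surjective base change on the target, $\Delta_{\overline F}$ is a monomorphism as soon as $R\to R'$ is one. But a morphism of schemes (here both $R$ and $R'$ are schemes, $R'$ because $\mathscr{K}_Q$ has finite diagonal) is a monomorphism precisely when it is injective on $T$-valued points for every $T$, and this is Proposition~\ref{faithful}, obtained by combining Propositions~\ref{first faithful} and~\ref{second faithful}. Hence $\Delta_{\overline F}$ is a monomorphism and $\overline F$ is representable by algebraic spaces. Finally, since the previous proposition shows $\overline F$ is proper, representability upgrades this to the statement that $\overline F$ is proper as a representable morphism: for every scheme $T\to\mathscr{K}_Q$ the fiber product $\mathscr{T}_Q\times_{\mathscr{K}_Q}T$ is an algebraic space, proper over $T$.

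I expect there is essentially no serious obstacle here: the content was already isolated in Proposition~\ref{faithful}, and the present statement is a formal consequence. The only points requiring care are bookkeeping ones, namely matching the explicit groupoid $R\to U\times U$ with the abstract fiber product $U\times_{\mathscr{T}_Q}U$ (this is part of the definition $\mathscr{T}_Q=[U/R]$ via \cite{LMB}) and making sure the diagonal criterion is invoked in a form valid for $\mathscr{K}_Q$ as an algebraic stack rather than a Deligne--Mumford stack. Since $\mathscr{K}_Q$ has finite diagonal this causes no difficulty, and if one prefers one may run the entire argument in terms of faithfulness of $\overline F$ on groupoid fibers, which again reduces to the injectivity of $R(T)\to R'(T)$ furnished by Proposition~\ref{faithful}.
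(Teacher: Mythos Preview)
Your proposal is correct and follows essentially the same route as the paper: both arguments reduce representability to the injectivity of $R\to R'$ established in Proposition~\ref{faithful}, and then note that properness was already shown. The only difference is cosmetic: the paper outsources the implication ``$R\to R'$ injective $\Rightarrow$ $\overline{F}$ representable'' to an external lemma (\cite{zhuav} Lemma 3.52), whereas you unpack that implication via the diagonal/faithfulness criterion.
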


\begin{proof}
By Proposition~\ref{faithful}, $R\to R'$ is injective. Then $\overline{F}$ is representable by (\cite{zhuav} Lemma 3.52). By (\cite{ols13} Proposition 10.1.2), for a representable separated morphism of finite type, the two properness mean the same.
\end{proof}

By construction, the algebraic stack $\mathscr{T}_Q$ is covered by open substacks $[U_{C(\mathscr{T})}/R_\mathscr{T}]$. The coarse moduli space $\mathcal{T}_Q$ is thus obtained by gluing the affine toric varieties $X_{C(\mathscr{T})}$ constructed by using the lattice $\mathbb{L}$. The gluing is by identifying different lattices in the same space $\mathbb{L}_\mathbf{Q}$. It is exactly the construction of the toric variety $X_{\Sigma(Q)}$ from the secondary fan $\Sigma(Q)$ and the lattice $\mathbb{L}^*$. Therefore $\mathcal{T}_Q\cong X_{\Sigma(Q)}$. Since $\mathscr{K}_Q$ is isomorphic to the main irreducible component of $\mathscr{TP}^{fr}[Q]$ introduced in \cite{Alex02} (\cite{ols08} Theorem 3.7.3), and the normalization of the irreducible main component of $\mathscr{TP}^{fr}[Q]$ is isomorphic to $X_{\Sigma(Q)}$ (\cite{Alex02} Corollary 2.12.3). The morphism induced by $\overline{F}$ between coarse moduli spaces is the normalization.

\begin{remark}
Strictly speaking, we need to check that the morphism between coarse moduli spaces is the compositions of the isomorphisms mentioned above. It can be done via the explicit description of the morphism in the proof of (\cite{Alex02} Theorem 2.11.8).
\end{remark}

\begin{theorem}[The Main Theorem]\label{compactification for GKZ}
Assume the lattice polytope $Q$ contains a regular simplex. We have a compatification $\mathscr{T}_Q$ of the moduli space of toric pairs $(X_Q,\Theta)$. Over $\mathbf{Q}$, $\mathscr{T}_Q$ is a proper Deligne-Mumford stack of finite type, with finite diagonal. It admits a coarse moduli space $\mathcal{T}_Q\cong X_{\Sigma(Q)}$. Furthermore, there is a proper, surjective, representable $1$-morphism $\overline{F}$ from $\mathscr{T}_Q$ to the moduli stack $\mathscr{K}_Q$ defined in \cite{ols08}, that induces the normalization between the coarse moduli spaces. 
\end{theorem}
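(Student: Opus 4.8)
The proof will assemble the results of this section. By construction $\mathscr{T}_Q=[U/R]$ for the groupoid $(U,R,s,t,\mu,\epsilon)$ in $(\Sch/\mathbf{Q})$, whose structure morphisms $s,t$ are \'{e}tale and surjective and for which $(s,t)\colon R\to U\times U$ is finite; here the hypothesis that $Q$ contains a regular simplex enters, since it makes the sequence \eqref{exact seq} right exact, so that $\mathbb{L}^*$ is free, each $U_{C(\mathscr{T})}$ is a genuine affine toric variety, and, working over $\mathbf{Q}$, the integers $m_\mathscr{T}$ are invertible so that the $G_\mathscr{T}$ are \'{e}tale. The claim that $\mathscr{T}_Q$ is then a proper Deligne--Mumford stack of finite type with finite diagonal admitting a coarse moduli space $\mathcal{T}_Q$ is exactly the theorem proved above, via \cite{LMB} Proposition~(4.3.1), the identification $U\times_{\mathscr{T}_Q}U\cong R$ of \cite{LMB}~(3.4.3), quasi-compactness of $U$, and the valuative criterion over a complete DVR. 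That $\mathscr{T}_Q$ \emph{compactifies} the moduli of toric pairs follows from the surjectivity discussion below: the locus $\mathscr{U}\subset\mathscr{K}_Q$ with trivial log structure is the moduli of pairs $(X_Q,\Theta)$, and the open torus orbit of any chart $U_{C(\mathscr{T})}$ maps isomorphically onto it, so this moduli sits as a dense open substack of $\mathscr{T}_Q$.

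Next I would identify the coarse space. The stack $\mathscr{T}_Q$ is covered by the open substacks $[U_{C(\mathscr{T})}/R_\mathscr{T}]=[U_{C(\mathscr{T})}/G_\mathscr{T}]$, whose coarse spaces are the affine toric varieties of the cones $C(\mathscr{T})^\vee$ taken with the lattice $\mathbb{L}$ (as observed just before Proposition~\ref{first faithful}). The \'{e}tale transition maps $p_{\tau,\mathscr{T}_i}$ of Lemma~\ref{lattices agree for GKZ} together with the isomorphisms $\varphi_{\mathscr{T}_1\mathscr{T}_2}$ glue these charts along the faces $\tau=C(\mathscr{T}_1)\cap C(\mathscr{T}_2)$ precisely by matching the various lattices $\mathbb{L}_i$ inside $\mathbb{L}_\mathbf{Q}$. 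On coarse spaces this is nothing but the standard construction of the toric variety attached to the fan $\Sigma(Q)$ with lattice $\mathbb{L}^*$, whence $\mathcal{T}_Q\cong X_{\Sigma(Q)}$.

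For the morphism to $\mathscr{K}_Q$, I would simply combine the propositions above: the $1$-morphisms $F\colon U\to\mathscr{K}_Q$ and $R\to U\times_{\mathscr{K}_Q}U$ descend (using Lemma~\ref{cocycle condition}) to a $1$-morphism $\overline{F}\colon\mathscr{T}_Q\to\mathscr{K}_Q$ which is proper, surjective, and representable (the last via the injectivity in Proposition~\ref{faithful} and \cite{zhuav} Lemma~3.52, \cite{ols13} Proposition~10.1.2). To see that $\overline{F}$ induces the normalization on coarse moduli spaces, recall that $\mathscr{K}_Q$ is the main component of $\mathscr{TP}^{fr}[Q]$ (\cite{ols08} Theorem~3.7.3) and that the normalization of that main component is $X_{\Sigma(Q)}$ (\cite{Alex02} Corollary~2.12.3); since the induced map of coarse spaces is a finite, birational morphism from the normal variety $X_{\Sigma(Q)}$, it must be the normalization morphism.

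I expect the genuine obstacle to be the last point: verifying that the morphism of coarse moduli spaces is \emph{literally} the composite of the isomorphisms just cited, rather than merely some finite birational morphism onto the normalization. Pinning this down means tracing the standard family through the explicit description of the comparison morphism in (\cite{Alex02} Theorem~2.11.8) and matching it chart by chart with the cone presentation $U_{C(\mathscr{T})}$ used here, as flagged in the remark following the theorem.
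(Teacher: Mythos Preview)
Your proposal is correct and follows essentially the same route as the paper: the Main Theorem is a summary statement, and both you and the paper prove it by assembling the chain of propositions in this section (groupoid construction, properness and DM via \cite{LMB}, identification of the coarse space chart-by-chart, then properness, surjectivity, and representability of $\overline{F}$). You have also correctly flagged the same residual issue the paper notes in its remark, namely that one must trace the explicit comparison in \cite{Alex02} Theorem~2.11.8 to know the induced map on coarse spaces is literally the normalization and not merely some finite birational map with the right source and target.
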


\begin{remark}
The construction of $\mathscr{T}_Q$ can be carried out over $\mathbf{Z}$. In that case, we only get an Artin stack with finite diagonal. Our approach only produces normal coarse moduli spaces because our monoids are obtained from the Mori fan and are natually toric monoids. 
\end{remark}

\begin{remark}
There is another interpretation of the moduli space in terms of algebraic cycles in $\mathbf{P}^{N-1}$. The integral polytope $Q$ defines a finite morphism $X_Q\to \mathbf{P}^{N-1}$, whose image $X'_Q$ is an algebraic cycle in $\mathbf{P}^{N-1}$. If the base is changed to $k=\mathbf{C}$, the action of the big torus $(\mathbf{C}^*)^{N}/\mathbf{C}^*$ on $\mathbf{P}^{N-1}$ induces an action on algebraic cycles. In \cite{Alex02}, Alexeev defines a map from $X_{\Sigma(Q)}$ to the parameter space of algebraic cycles induced by torus actions on $X'_Q$. See also (\cite{GKZ} Chapter 8 Theorem 3.2). 
\end{remark}

\bibliographystyle{amsalpha}
\bibliography{Bibliography}

\end{document}